\newtheorem{thm}{Theorem}[section]
\newtheorem{cor}[thm]{Corollary}
\newtheorem{claim}[thm]{Claim}
\newtheorem{lemma}[thm]{Lemma}
\newtheorem{prop}[thm]{Proposition}
\theoremstyle{definition}
\newtheorem{definition}[thm]{Definition}
\newtheorem{remark}[thm]{Remark}
\newtheorem{question}[thm]{Question}
\title{Rank-one isometries of CAT(0) cube complexes and their centralisers}
\date{\today}
\author{Anthony Genevois}
\begin{document}

\maketitle

\begin{abstract}
If $G$ is a group acting geometrically on a CAT(0) cube complex $X$ and if $g \in G$ is an infinite-order element, we show that exactly one of the following situations occurs: (i) $g$ defines a rank-one isometry of $X$; (ii) the stable centraliser $SC_G(g)= \{ h \in G \mid \exists n \geq 1, [h,g^n]=1 \}$ of $g$ is not virtually cyclic; (iii) $\mathrm{Fix}_Y(g^n)$ is finite for every $n \geq 1$ and the sequence $(\mathrm{Fix}_Y(g^n))$ takes infinitely many values, where $Y$ is a cubical component of the Roller boundary of $X$ which contains an endpoint of an axis of $g$. We also show that (iii) cannot occur in several cases, providing a purely algebraic characterisation of rank-one isometries.
\end{abstract}

\tableofcontents

\section{Introduction}

\noindent
A major progress in the study of the geometry of CAT(0) cube complexes made in the last years was the proof of the rank rigidity conjecture \cite{CapraceSageev}. Namely, if a group $G$ acts essentially and without fixed point at infinity on a CAT(0) cube complex $X$, then either $G$ contains a rank-one isometry or $X$ decomposes as a product. Among the applications, let us mention: constructions of free subgroups and variations \cite{CapraceSageev, PingPong, CubeUniformGrowth}, acylindrical hyperbolicity (see \cite[Section 6.2]{MoiHypCube} and references therein), random walks \cite{CCCrandom, random} and obstructions to act on CAT(0) cube complexes \cite{CIF, ThompsonFW, CubeKahler, TorsionCube}. All these applications motivate the fundamental role played by rank-one isometries in the geometry of CAT(0) cube complexes.

\medskip \noindent
In this article, we are interested in the following question:

\begin{question}
Let $G$ be a group acting geometrically on a CAT(0) cube complex $X$. Does there exist a purely algebraic characterisation of the elements of $G$ which induce rank-one isometries of $X$?
\end{question}

\noindent
A natural attempt is to ask the \emph{stable centraliser} 
$$SC_G(g)= \{ h \in G \mid \exists n \geq 1, [h,g^n]=1 \}$$
of an element $g$ of our group $G$ to be virtually cyclic. (Notice that $SC_G(g)$ contains the centraliser of $g$ and has finite index in the commensurator of $\langle g \rangle$.) Unfortunately, it turns out that the stable centraliser may be virtually cyclic while the isometry is not rank-one. More precisely, \cite[Corollary 21]{Rattaggi} provides the example of a commutative-transitive group $G$ acting geometrically on a product of two trees $T_1 \times T_2$ and containing an infinite-order element $g$ whose (stable) centraliser is infinite cyclic.

\medskip \noindent
The main goal of the article is to understand how the equivalence between being a rank-one isometry and having a virtually cyclic stable centraliser may fail. In this context, we prove the following statement:

\begin{thm}\label{intro:Main}
Let $G$ be a group acting geometrically on a CAT(0) cube complex $X$, and $g \in G$ an infinite-order element. Fix a cubical component $Y \subset \mathfrak{R}X$ which contains an endpoint of an axis of $g$. Then exactly one of the following situations occurs:
\begin{itemize}
	\item $g$ defines a rank-one isometry of $X$;
	\item the stable centraliser $SC_G(g)$ of $g$ is not virtually cyclic;
	\item $\mathrm{Fix}_Y(g^n)$ is finite for every $n \geq 1$ and the sequence $(\mathrm{Fix}_Y(g^n))$ takes infinitely many values. 
\end{itemize}
\end{thm}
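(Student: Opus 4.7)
The plan is to partition according to the behaviour of the sequence $(\mathrm{Fix}_Y(g^n))_{n \geq 1}$. Because $\xi$ is $g$-fixed the cubical component $Y$ is $g$-invariant, and the family is monotone along divisibility: $\mathrm{Fix}_Y(g^n) \subseteq \mathrm{Fix}_Y(g^{nm})$. This yields three exhaustive and mutually exclusive cases: (a) some $\mathrm{Fix}_Y(g^n)$ is infinite; (b) every $\mathrm{Fix}_Y(g^n)$ is finite and the sequence takes only finitely many values; (c) every $\mathrm{Fix}_Y(g^n)$ is finite and the sequence takes infinitely many values. Case (c) is exactly condition (iii), so the ``exactly one'' statement reduces to establishing the equivalences (a)$\Leftrightarrow$(ii) and (b)$\Leftrightarrow$(i).

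For (a)$\Rightarrow$(ii), I would attach to each $\eta \in \mathrm{Fix}_Y(g^N)$ an equivariant combinatorial object in $X$ (a combinatorial ray, or a $g^N$-invariant convex subcomplex with $\eta$ as endpoint in $\mathfrak{R}X$). Cocompactness of $G \curvearrowright X$ applied to bounded portions of this data, together with a pigeonhole over infinitely many distinct $\eta_i$, produces elements $h_i \in G$ sending one such object to another; these elements commensurate $\langle g \rangle$ and therefore lie in $SC_G(g)$. Arranging the $h_i$ to represent infinitely many cosets of $\langle g \rangle$ then defeats virtual cyclicity. For the converse (ii)$\Rightarrow$(a), an infinite family in $SC_G(g)$ contains, after restriction, infinitely many elements of $C_G(g^N)$ for a common $N$; these permute the finite or infinite set $\mathrm{Fix}_Y(g^N)$. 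If $\mathrm{Fix}_Y(g^N)$ were finite, the pointwise stabiliser would be a finite-index subgroup of $C_G(g^N)$ fixing prescribed Roller endpoints, hence virtually cyclic by projection onto an axis, contradicting the non-virtual-cyclicity of $SC_G(g)$.

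For (b)$\Rightarrow$(i), stabilisation of the finite fixed-point sequence forces $\xi$ to be ``isolated'' in $Y$, ruling out a flat half-plane bounded by an axis of $g$; by standard characterisations of rank-one in cube complexes this gives (i). Indeed, a non-rank-one axis would admit a transverse $g$-invariant combinatorial direction, whose traces on $Y$ furnish new $g^n$-fixed points for unboundedly many $n$, contradicting stabilisation. Conversely (i)$\Rightarrow$(b): a rank-one isometry has isolated Roller endpoints, so $\mathrm{Fix}_Y(g^n)$ is a fixed finite set for every $n$.

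The main obstacle is the geometric-algebraic dictionary (a)$\Leftrightarrow$(ii): turning boundary fixed points into commuting elements, and conversely. This requires a robust correspondence between $g^N$-fixed points in cubical components and $g^N$-invariant combinatorial structures in $X$, together with a quantitative exploitation of cocompactness to promote geometric proximity at infinity into algebraic commensuration of $\langle g \rangle$. The other implications should follow from more standard rank-one technology (contracting-projection arguments for virtual cyclicity of stable centralisers) and from structure theory of the Roller boundary, both presumably developed earlier in the paper.
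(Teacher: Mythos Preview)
Your trichotomy (a)/(b)/(c) on the behaviour of $(\mathrm{Fix}_Y(g^n))$ is clean, but the claimed equivalences (a)$\Leftrightarrow$(ii) and (b)$\Leftrightarrow$(i) are both false, and for the same underlying reason: the stable minimising set of $g$ has \emph{two} factors, $\mathrm{SMin}(g)\cong Y\times I(\zeta,\xi)$ (Theorem~\ref{thm:Iso}), and you are only tracking the first one. The characterisation of rank one is that $\mathrm{SMin}(g)$ is a quasi-line (Theorem~\ref{thm:SMingQL}), which requires \emph{both} that $Y$ be bounded and that the convex hull $I(\zeta,\xi)$ of an axis be a quasi-line. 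Your case (b) is equivalent, via $\bigcup_n \mathrm{Fix}_Y(g^n)=Y$, to $Y$ being finite; but $Y$ can be finite while $\gamma$ is not quasiconvex, i.e.\ $I(\zeta,\xi)$ is not a quasi-line, and then $g$ is \emph{not} rank one. This is exactly Case~2 of the paper's proof, and it lands in (ii), not in (i): for a suitable power $n$ one has $Q_n=I(\zeta,\xi)$ (Lemma~\ref{lem:Qstable}), and since $C_G(g^n)$ acts geometrically on $\mathrm{Min}(g^n)\cong \mathrm{Fix}_Y(g^n)\times I(\zeta,\xi)$ with the first factor finite, the centraliser is not virtually cyclic.

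The same example breaks your argument for (ii)$\Rightarrow$(a). You write that if $\mathrm{Fix}_Y(g^N)$ is finite then the pointwise stabiliser ``fixes prescribed Roller endpoints, hence virtually cyclic by projection onto an axis''. Fixing the two Roller endpoints $\zeta,\xi$ of an axis does not force virtual cyclicity: any group acting geometrically on a thick $I(\zeta,\xi)$ fixes $\zeta$ and $\xi$ and is not virtually cyclic (think of $\mathbb{Z}^2$ on the square tiling of the plane). So the non-virtual-cyclicity of $SC_G(g)$ can come entirely from the $I(\zeta,\xi)$ factor, with every $\mathrm{Fix}_Y(g^n)$ finite and the sequence eventually constant. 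In short, your plan proves (a)$\Rightarrow$(ii) and (i)$\Rightarrow$(b) (these are fine, and match the paper via Lemma~\ref{lem:centraliserCC} and Lemma~\ref{lem:BoundedInf}), but the reverse implications fail; the missing ingredient is precisely the analysis of the interval factor $I(\zeta,\xi)$ and Lemma~\ref{lem:Qstable}.
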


\noindent
The third case of our trichotomy is precisely what happens in Rattaggi's example. More precisely, our isometry $g \in \mathrm{Isom}(T_1 \times T_2)$ stabilises $T_1$, so that a cubical component which contains an endpoint of an axis of $g$ must be an unbounded and locally finite tree $T$ (actually, $T$ must be isometric to $T_2$). Then $g$ induces an isometry $h$ of $T$ such that the fixed-set of $h^n$ increases as $n \to + \infty$ but always remains bounded. 

\medskip \noindent
However, such a behavior seems to be exotic. In most cases, it typically does not happen. In this context, we deduce from Theorem \ref{intro:Main} the following statement:

\begin{thm}\label{intro:Special}
Let $G$ be a group acting geometrically on a CAT(0) cube complex $X$. Assume that, for every hyperplane $J$ of $X$ and every element $g \in G$, the two hyperplanes $J$ and $gJ$ cannot be transverse nor tangent. Then an infinite-order element of $G$ defines a rank-one isometry of $X$ if and only if its stable centraliser in $G$ is virtually cyclic.
\end{thm}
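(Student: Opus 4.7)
The forward direction is immediate from Theorem~\ref{intro:Main}: since its three alternatives are mutually exclusive, rank-oneness (case (i)) forces case (ii) to fail, i.e.\ $SC_G(g)$ to be virtually cyclic.

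For the converse, let $g \in G$ have infinite order and assume $SC_G(g)$ is virtually cyclic, so that case (ii) of Theorem~\ref{intro:Main} does not hold. It then suffices to rule out case (iii), for then the remaining alternative (i) yields that $g$ is rank-one. Fix a cubical component $Y \subset \mathfrak{R}X$ containing an endpoint of an axis of $g$; replacing $g$ by a power if necessary, I would assume $g$ stabilises $Y$, and suppose for contradiction that each $\mathrm{Fix}_Y(g^n)$ is finite while the sequence $(\mathrm{Fix}_Y(g^n))$ takes infinitely many values.

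The plan is to extract from this a hyperplane orbit in $X$ that contradicts the specialness hypothesis. The infinite-values assumption produces, for arbitrarily large $n$, a point $\xi \in \mathrm{Fix}_Y(g^n)$ whose $\langle g \rangle$-orbit $\{\xi, g\xi, \ldots, g^{n-1}\xi\}$ consists of $n$ distinct points of the cube complex $Y$. Pick a hyperplane $J$ of $Y$ separating $\xi$ from $g\xi$; then $g^iJ$ separates $g^i\xi$ from $g^{i+1}\xi$. Lift $J$ to a hyperplane $\widetilde{J}$ of $X$ via the dictionary between hyperplanes of a cubical component of $\mathfrak{R}X$ and those of $X$, producing a $\langle g \rangle$-orbit $\widetilde{J}, g\widetilde{J}, g^2\widetilde{J}, \ldots$ in $X$ to which the specialness hypothesis applies: any two of these translates are either equal or strongly separated. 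I would then split on whether this orbit is finite or infinite. In the infinite case, a long chain of pairwise strongly separated translates of $\widetilde{J}$ arranged along an axis of $g$ should witness rank-one behaviour of $g$ on $X$, contradicting the reduction to case (iii); in the finite case, a power of $g$ stabilises $\widetilde{J}$, and combining this with the rich fixed-point structure in $Y$ should produce commuting elements contradicting the virtual cyclicity of $SC_G(g)$.

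The hardest step will be closing this dichotomy cleanly. It requires a precise dictionary between hyperplanes of the boundary cube complex $Y$ and those of $X$ --- presumably via a half-space/ultrafilter description of $\mathfrak{R}X$ --- and a cocompactness argument converting the asymptotic information carried by the fixed sets $\mathrm{Fix}_Y(g^n)$ into bounded combinatorial data on which the \emph{non-transverse, non-tangent} hypothesis can actually bite. The specialness hypothesis is clearly the right combinatorial lever, but coupling it to boundary dynamics of $g$ to obtain a genuine contradiction in both branches is where the bulk of the work lies.
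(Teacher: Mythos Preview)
Your forward direction is fine and matches the paper. The converse, however, contains a genuine error and an unnecessary detour.

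First, the error: from the hypothesis that $J$ and $gJ$ are neither transverse nor tangent you conclude that distinct translates $g^i\widetilde{J}$ are \emph{strongly separated}. This is false. ``Not transverse and not tangent'' only says that the carriers $N(\widetilde{J})$ and $N(g\widetilde{J})$ are disjoint, i.e.\ some hyperplane lies between them; it does \emph{not} say that no hyperplane crosses both. In a product $T_1 \times T_2$ of trees, two disjoint hyperplanes coming from $T_1$ are never transverse or tangent, yet every hyperplane coming from $T_2$ crosses both, so they are far from strongly separated. Consequently your ``infinite orbit'' branch does not produce a pair of $L$-separated hyperplanes and cannot conclude rank-one via Proposition~\ref{prop:contracting}. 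The ``finite orbit'' branch is left as a hope (``should produce commuting elements''), and I do not see how to close it either: a power of $g$ stabilising one hyperplane gives no obvious handle on $SC_G(g)$.

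Second, the detour is avoidable. The paper's argument is much shorter and does not use the assumption that case~(ii) fails at all: it shows directly that under the hypothesis one has $\mathrm{Fix}_Y(g^n)=Y$ for \emph{every} $n\geq 1$, so case~(iii) is simply impossible. The mechanism is purely local in $Y$. Take $\zeta\in\mathrm{Fix}_Y(g^n)$ and any neighbour $\xi\in Y$; let $J$ be the unique hyperplane of $X$ separating them. Then $g^n\xi$ is also a neighbour of $\zeta$, separated from it by $g^nJ$. If $g^nJ\neq J$ then $\xi$ and $g^n\xi$ differ on exactly the two hyperplanes $J,\,g^nJ$, so $N(J)\cap N(g^nJ)\neq\emptyset$ (otherwise a third hyperplane would separate $\xi$ from $g^n\xi$), forcing $J$ and $g^nJ$ to be transverse or tangent --- forbidden. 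Hence $g^nJ=J$, whence $g^n\xi=\xi$. By induction on the distance to $\zeta$, $g^n$ fixes all of $Y$. This is exactly the ``dictionary'' step you anticipated, but used pointwise at neighbours rather than globally on an orbit of hyperplanes; once you see it, the dichotomy you set up becomes unnecessary.
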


\noindent
For instance, our theorem applies to Haglund and Wise's (virtually) cocompact special groups \cite{Special}, which include many groups of interest such as right-angled Artin groups, graph products of finite groups (e.g., right-angled Coxeter groups) or graph braid groups. 

\medskip \noindent
In our general study of stable centralisers of isometries, we also prove the following statement, which we think to be of independent interest:

\begin{thm}\label{intro:Regular}
Let $G$ be a group acting geometrically on a CAT(0) cube complex $X$. Assume that $G$ decomposes as a product of $n \geq 1$ unbounded irreducible CAT(0) cube complexes $X_1 \times \cdots X_n$. If $g \in G$ is a regular element, then $SC_G(g)$ is virtually $\mathbb{Z}^n$. 
\end{thm}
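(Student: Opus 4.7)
The plan is to split the statement into an upper and a lower bound on the rank of $SC_G(g)$, after first reducing to the case where $G$ respects the product structure. By the canonical nature of the irreducible factorisation of a CAT(0) cube complex with cocompact isometry group, a finite-index subgroup $G_0$ of $G$ preserves the decomposition $X = X_1 \times \cdots \times X_n$; this yields projection homomorphisms $p_i \colon G_0 \to \mathrm{Isom}(X_i)$ whose images $G_i$ act cocompactly on each irreducible factor. Writing $g = (g_1, \ldots, g_n)$ (after replacing $g$ by a power lying in $G_0$), the regularity hypothesis is that each $g_i$ is a rank-one isometry of $X_i$. For the upper bound, observe that $h = (h_1, \ldots, h_n) \in G_0$ stably commutes with $g$ if and only if each $h_i$ stably commutes with $g_i$ in $G_i$. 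Applying Theorem \ref{intro:Main} to the cocompact action of $G_i$ on the irreducible complex $X_i$, case (i) holds for $g_i$ and therefore excludes case (ii), so $SC_{G_i}(g_i)$ is virtually $\mathbb{Z}$. Consequently $SC_G(g)$ embeds into $\prod_{i=1}^{n} SC_{G_i}(g_i)$, which is virtually $\mathbb{Z}^n$, whence $SC_G(g)$ is virtually free abelian of rank at most $n$.

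For the lower bound I would exhibit a $\mathbb{Z}^n$ inside $SC_G(g)$ via a flat torus argument. The rank-one element $g_i$ admits a combinatorial axis $\alpha_i \subset X_i$, and the product $F := \alpha_1 \times \cdots \times \alpha_n$ is an $n$-dimensional Euclidean subcomplex of $X$ on which $g$ acts as a diagonal translation. The key step is to show that the setwise stabiliser $\mathrm{Stab}_G(F)$ acts cocompactly on $F$. Combined with the automatic finiteness of the pointwise stabiliser of $F$ (by properness of the $G$-action on $X$), Bieberbach's theorem then yields $\mathrm{Stab}_G(F)$ virtually $\mathbb{Z}^n$. Any $t \in \mathrm{Stab}_G(F)$ acts on $F$ by an isometry that commutes with the translation induced by $g$, so the commutator $[t, g^N]$ acts trivially on $F$ and hence lies in the finite pointwise stabiliser; a bookkeeping argument using the finiteness of this kernel then yields $[t, g^M] = 1$ for a sufficiently large $M$, placing $\mathrm{Stab}_G(F)$ inside $SC_G(g)$.

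The main obstacle is to verify that $\mathrm{Stab}_G(F)$ acts cocompactly on $F$, i.e.\ that $F$ is a \emph{periodic flat}. A priori, cocompactness of $G$ on $X$ does not directly yield cocompactness on $F$, since a $G$-translate of a point of $F$ could lie near $F$ without being induced by an element of $\mathrm{Stab}_G(F)$. The rank-one hypothesis supplies the missing rigidity: each axis $\alpha_i$ is a contracting geodesic in $X_i$, so its $G_i$-translates staying within bounded distance of it form finitely many parallel copies. Factorwise, only finitely many $G$-translates of $F$ come within bounded distance of $F$, and a standard pigeonhole then converts cocompactness of $G$ on a neighbourhood of $F$ into cocompactness of $\mathrm{Stab}_G(F)$ on $F$, completing the proof.
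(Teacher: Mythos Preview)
Your upper bound has a real gap. You invoke Theorem~\ref{intro:Main} for the action $G_i \curvearrowright X_i$, but that theorem is stated (and proved) for \emph{geometric} actions, and the projected action of $G_i$ on $X_i$ is cocompact but in general not proper. Indeed, the very example of Rattaggi mentioned in the introduction is an irreducible lattice in a product of two trees; its projection to each tree factor has non-discrete image. Without properness the implication ``$g_i$ rank-one $\Rightarrow SC_{G_i}(g_i)$ virtually cyclic'' simply fails: take any group $H$ and let $H \times \mathbb{Z}$ act on a line through the second factor; the generator of $\mathbb{Z}$ is rank-one but its stable centraliser is all of $H \times \mathbb{Z}$. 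So the embedding $SC_{G_0}(g) \hookrightarrow \prod_i SC_{G_i}(g_i)$ does not by itself bound the rank. There is also a smaller issue in the lower bound: an element $t \in \mathrm{Stab}_G(F)$ can act on a factor $\alpha_i$ as a reflection, in which case $[t,g^N]$ restricts to a nontrivial translation of $\alpha_i$ and does not lie in the pointwise stabiliser; this is fixable by passing to a further finite-index subgroup, but as written the inclusion $\mathrm{Stab}_G(F) \subset SC_G(g)$ is not justified.

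The paper avoids the properness problem by never leaving the genuinely geometric action $G \curvearrowright X$. It chooses $m$ so that no hyperplane is transverse to its $g^m$-translate, and uses Lemma~\ref{lem:centraliserCC} to get $C_G(g^m)$ acting geometrically on $\mathrm{Min}(g^m)$; here properness of $G$ on $X$ (not on any factor) is what is used. The decomposition of $\mathrm{Min}(g^m)$ from Lemma~\ref{lem:Iso} together with Lemma~\ref{lem:BoundedInf} (the cubical component at infinity of a contracting axis is bounded) and Lemma~\ref{lem:Qstable} identifies $\mathrm{Min}(g^m)$, up to a finite factor, with $I(\zeta,\xi) = \prod_i I(\zeta_i,\xi_i)$, a product of $n$ quasi-lines. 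Thus $C_G(g^m)$ is virtually $\mathbb{Z}^n$, and the passage to $SC_G(g)$ uses the ascending chain condition for virtually abelian subgroups of CAT(0) groups. Note that this yields both bounds simultaneously, with no separate flat-torus step.
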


\noindent
Recall from \cite{CapraceSageev} that $g$ is \emph{regular} if it induces a rank-one isometry on each factor $X_i$. The existence of regular elements has been proved in \cite{CCCrandom} using probabilistic methods (see also \cite[Theorem 6.67]{MoiHypCube} for an alternative proof based on cubical and hyperbolic geometries). 

\medskip \noindent
The article is organised as follows. In Section \ref{section:Pre}, we recall general definitions and record preliminary statements for future use. In Section \ref{section:SMin}, we introduce \emph{stable minimising sets} and following \cite{FTT} we prove a decomposition theorem. The connection between the stable minimising set and the property of being a rank-one isometry is explained in Section \ref{section:Contracting}, and Theorem \ref{intro:Main} is proved in Section \ref{section:Main}. Finally, a few applications are proved in Section \ref{section:Applications}, including Theorems \ref{intro:Special} and \ref{intro:Regular} above, and we conclude the article with open questions in Section \ref{section:Questions}.

\paragraph{Acknowledgments.} This work was supported by a public grant as part of the Fondation Math\'ematique Jacques Hadamard.

\section{Preliminaries}\label{section:Pre}

\subsection{Cube complexes, hyperplanes, projections}

\noindent
A \textit{cube complex} is a CW complex constructed by gluing together cubes of arbitrary (finite) dimension by isometries along their faces. It is \textit{nonpositively curved} if the link of any of its vertices is a simplicial \textit{flag} complex (ie., $n+1$ vertices span a $n$-simplex if and only if they are pairwise adjacent), and \textit{CAT(0)} if it is nonpositively curved and simply-connected. See \cite[page 111]{MR1744486} for more information.

\medskip \noindent
Fundamental tools when studying CAT(0) cube complexes are \emph{hyperplanes}. Formally, a \textit{hyperplane} $J$ is an equivalence class of edges with respect to the transitive closure of the relation identifying two parallel edges of a square. Notice that a hyperplane is uniquely determined by one of its edges, so if $e \in J$ we say that $J$ is the \textit{hyperplane dual to $e$}. Geometrically, a hyperplane $J$ is rather thought of as the union of the \textit{midcubes} transverse to the edges belonging to $J$ (sometimes referred to as its \emph{geometric realisation}). See Figure \ref{figure27}. The \textit{carrier} $N(J)$ of a hyperplane $J$ is the union of the cubes intersecting (the geometric realisation of) $J$. 

\medskip \noindent
There exist several metrics naturally defined on a CAT(0) cube complex. In this article, we are only interested in the graph metric defined on its one-skeleton, referred to as its \emph{combinatorial metric}. In fact, from now on, we will identify a CAT(0) cube complex with its one-skeleton, thought of as a collection of vertices endowed with a relation of adjacency. In particular, when writing $x \in X$, we always mean that $x$ is a vertex of $X$. 

\medskip \noindent
The following theorem will be often used along the article without mentioning it.

\begin{thm}\emph{\cite{MR1347406}}
Let $X$ be a CAT(0) cube complex.
\begin{itemize}
	\item If $J$ is a hyperplane of $X$, the graph $X \backslash \backslash J$ obtained from $X$ by removing the (interiors of the) edges of $J$ contains two connected components. They are convex subgraphs of $X$, referred to as the \emph{halfspaces} delimited by $J$.
	\item A path in $X$ is a geodesic if and only if it crosses each hyperplane at most once.
	\item For every $x,y \in X$, the distance between $x$ and $y$ coincides with number of hyperplanes separating them.
\end{itemize}
\end{thm}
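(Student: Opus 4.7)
The plan is to treat the three bullets together, taking as central technical input a bigon-reduction argument for edge-paths, made possible by the simple connectivity of $X$.

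First I would establish the parity lemma: every closed edge-loop in $X$ crosses each hyperplane an even number of times. Since $X$ is simply connected and nonpositively curved, any closed edge-loop bounds a square disk diagram. Within a single square, each hyperplane of $X$ either avoids the boundary or meets it in a pair of opposite edges, so traversing the square's boundary crosses each hyperplane an even number of times; an induction on the number of 2-cells of the disk diagram (peeling off one square at a time along the boundary) extends this to arbitrary loops. As a consequence, an edge-path in $X$ whose endpoints straddle a single edge $e \in J$ must cross $J$ an odd number of times, so $X \backslash\backslash J$ is disconnected. To see it has at most two components, I would use that the carrier $N(J)$ is naturally a product $J \times [0,1]$, whose vertex set splits into two ``sides''; any vertex of $X$ can be joined to $N(J)$ by some edge-path, and the parity lemma ensures the side so reached depends only on the endpoint, yielding a two-valued labelling on vertices.

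The main obstacle is the geodesic characterisation, which amounts to showing that an edge-path crossing some hyperplane twice can be strictly shortened. I would use the classical corner-reduction argument: suppose $\gamma=(e_1,\dots,e_n)$ is an edge-path, and pick $i<j$ with $e_i,e_j$ dual to the same hyperplane $J$ and $j-i$ minimal. The minimality forces each intermediate edge $e_{i+1},\dots,e_{j-1}$ to be dual to a hyperplane transverse to $J$: indeed, the endpoint of $e_i$ and the start of $e_j$ both lie in $N(J) \cong J \times [0,1]$, so the intermediate subpath stays on one side of $J$ yet starts and ends with a vertex in the carrier. Using the flag condition (nonpositive curvature), one slides the crossing $e_i$ successively across these transverse hyperplanes; each slide replaces a two-edge corner with the opposite two edges of a square, producing a path of the same length. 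After $j-i-1$ slides, $e_i$ sits adjacent to $e_j$ in a single square, whose two remaining sides yield a path of length $n-2$ with the same endpoints as $\gamma$. This contradicts minimality, so no geodesic crosses any hyperplane twice.

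From this point the remaining statements follow cheaply. Since the edges of any geodesic $\gamma$ from $x$ to $y$ are dual to pairwise distinct hyperplanes, and each such hyperplane must separate $x$ from $y$ (otherwise removing that edge and taking a shortcut around using the above reduction contradicts minimality), the length of $\gamma$ equals the number of hyperplanes it crosses, which is bounded above by the number of hyperplanes separating $x$ and $y$; conversely, any edge-path from $x$ to $y$ must cross every separating hyperplane at least once. This proves the distance formula, and the converse direction of the geodesic characterisation drops out. Convexity of halfspaces is then immediate: if $x,y$ lie on the same side of $J$ and a geodesic between them crossed $J$, it would cross $J$ at least twice (by parity), contradicting what was proved. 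The hard part is certainly the square-reduction step, which is where nonpositive curvature enters in an essential way.
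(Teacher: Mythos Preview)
This theorem is not proved in the paper; it is quoted as background from Sageev \cite{MR1347406}. So there is no in-paper argument to compare against, and I simply assess your proposal on its own.

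The parity lemma via disk diagrams, the two-sidedness of hyperplanes, and the final deductions (distance formula, convexity of halfspaces) are all fine. The genuine gap is in the corner-reduction step. You claim that in an innermost bigon $(e_i,\dots,e_j)$ every intermediate edge is dual to a hyperplane transverse to $J$, but your justification---that the intermediate subpath stays on one side of $J$ and begins and ends in $N(J)$---does not give this: that subpath may leave $N(J)$ entirely, and nothing you have said prevents an intermediate hyperplane from being disjoint from $J$. More importantly, even if the hyperplane $K$ of $e_{i+1}$ were transverse to $J$ somewhere in $X$, the slide you want requires the \emph{local} fact that $e_i$ and $e_{i+1}$ span a square at their common vertex, and the flag condition does not manufacture that square (it fills in higher cubes once the $2$-skeleton is already there). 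Finally, after the slides you end with two adjacent edges of $J$; since opposite sides of a square never share a vertex they cannot lie ``in a single square'' as you write, and excluding two distinct $J$-edges at one vertex is precisely the no-self-osculation property of hyperplanes in CAT(0) cube complexes, which itself needs proof.

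All of these gaps are exactly what Sageev's disk-diagram argument handles: one closes the innermost bigon against a path in the carrier, passes to a reduced diagram, and uses nonpositive curvature to show that tracks in the diagram neither self-cross nor cross one another twice. That constraint forces every track meeting the intermediate arc to hit the $J$-track, and the squares recording those crossings are the squares your slides need; it also rules out self-osculation. Your outline is on the right track, but it needs this extra input before the reduction goes through.
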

\begin{figure}
\begin{center}
\includegraphics[trim={0 13cm 10cm 0},clip,scale=0.4]{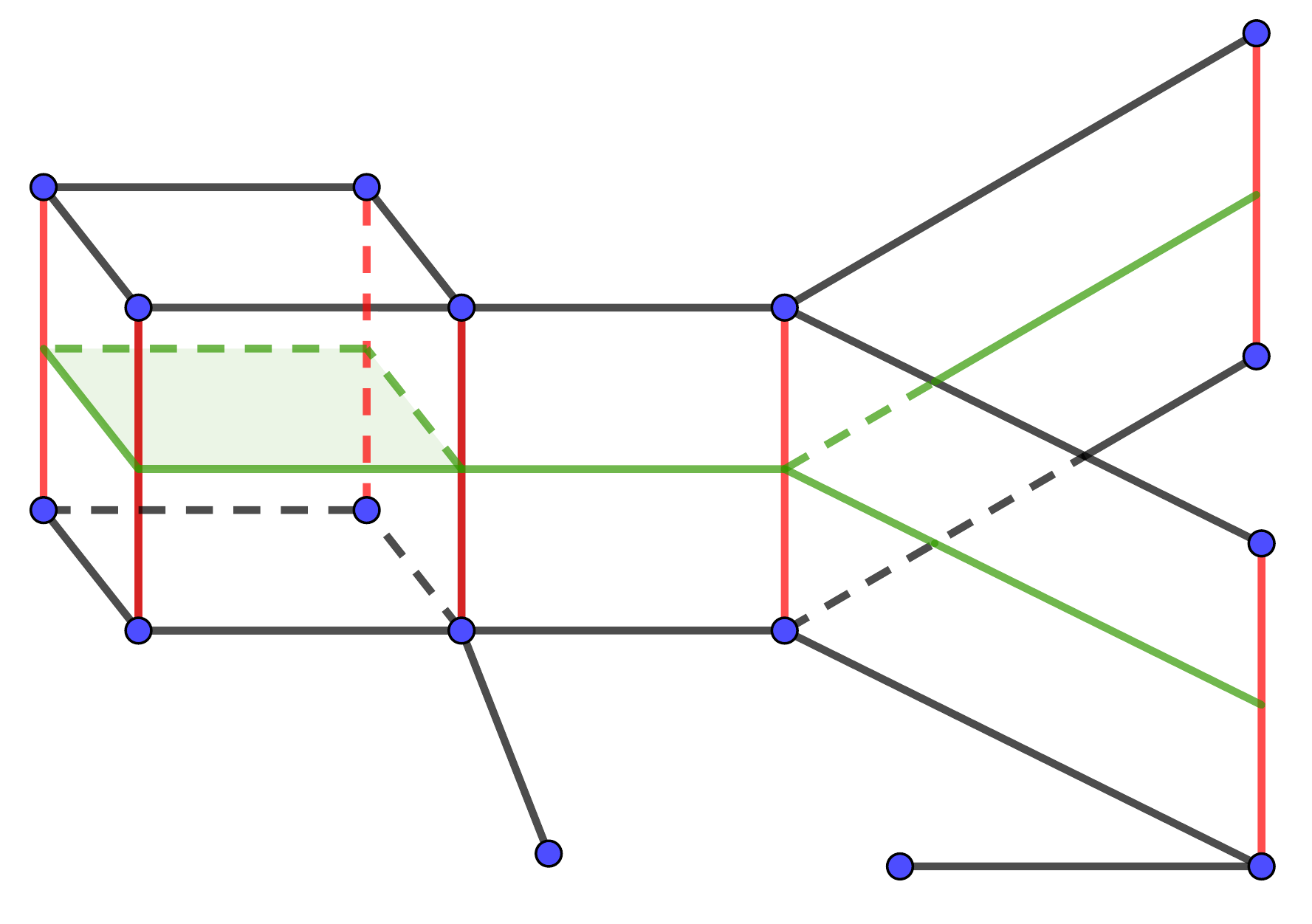}
\caption{A hyperplane (in red) and the associated union of midcubes (in green).}
\label{figure27}
\end{center}
\end{figure}

\noindent
Another useful tool when studying CAT(0) cube complexes is the notion of \emph{projection} onto on a convex subcomplex, which is defined by the following proposition (see \cite[Lemma 13.8]{Special}):

\begin{prop}\label{projection}
Let $X$ be a CAT(0) cube complex, $C \subset X$ a convex subcomplex and $x \in X \backslash C$ a vertex. There exists a unique vertex $y \in C$ minimizing the distance to $x$. Moreover, for any vertex of $C$, there exists a geodesic from it to $x$ passing through $y$.
\end{prop}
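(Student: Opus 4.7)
The plan is to prove existence by a direct minimisation argument and then to characterise the minimiser via hyperplanes; this characterisation will yield uniqueness and the geodesic statement simultaneously. Write $\mathcal{H}(u,v)$ for the set of hyperplanes separating two vertices $u$ and $v$, and $\mathcal{S}(x,C)$ for the set of hyperplanes $J$ such that $C$ lies entirely in one halfspace of $J$ and $x$ in the other. By the third item of the theorem quoted just before the proposition, $d(u,v) = |\mathcal{H}(u,v)|$. The key lemma I aim for is: if $y \in C$ minimises $d(x,\cdot)$, then $\mathcal{H}(x,y) = \mathcal{S}(x,C)$.

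Existence of a minimiser $y \in C$ is immediate because $d(x,\cdot)$ is nonnegative and integer-valued. The inclusion $\mathcal{S}(x,C) \subseteq \mathcal{H}(x,y)$ is automatic since $y \in C$. For the reverse inclusion, suppose some $J \in \mathcal{H}(x,y)$ did not lie in $\mathcal{S}(x,C)$; then some $c \in C$ sits on the same side of $J$ as $x$. Convexity of $C$ yields a geodesic from $y$ to $c$ inside $C$, and since $J$ separates its endpoints one can rearrange it so that its first edge crosses $J$ — this is the standard fact that in a CAT(0) cube complex, any hyperplane separating two vertices can be crossed by the initial edge of some geodesic between them, because geodesics correspond to admissible orderings of the separating hyperplanes. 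The second vertex $v_1$ of this rearranged geodesic lies in $C$; passing from $y$ to $v_1$ toggles only the side with respect to $J$, and since $J$ separated $x$ from $y$, one gets $d(x,v_1) = d(x,y)-1$, contradicting minimality. This proves the lemma.

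Uniqueness now follows because a vertex of $X$ is determined by its halfspace choices: any two minimisers $y_1, y_2 \in C$ satisfy $\mathcal{H}(x,y_1) = \mathcal{H}(x,y_2) = \mathcal{S}(x,C)$, so they lie on the same side of every hyperplane, forcing $y_1 = y_2$. For the last assertion, fix $z \in C$; it suffices to establish $d(x,z) = d(x,y) + d(y,z)$, which is equivalent to $\mathcal{H}(x,y) \cap \mathcal{H}(y,z) = \emptyset$. Any $J$ in this intersection would lie in $\mathcal{S}(x,C)$ by the lemma, hence would place $z \in C$ on the $y$-side of $J$, contradicting $J \in \mathcal{H}(y,z)$.

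The main obstacle is the geodesic-rearrangement step used in the lemma, as it requires the combinatorial flexibility of CAT(0) cube complexes, namely that the hyperplanes crossed by a geodesic can be reordered freely subject to their pairwise non-transversality constraints. Everything else is routine bookkeeping with the identification $d(u,v) = |\mathcal{H}(u,v)|$ already recorded in the theorem preceding the proposition.
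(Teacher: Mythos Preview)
The paper does not prove this proposition itself; it only cites \cite[Lemma~13.8]{Special}. Your overall strategy --- show that a distance-minimiser $y$ satisfies $\mathcal{H}(x,y)=\mathcal{S}(x,C)$ and deduce everything from that --- is the standard one, and the uniqueness and geodesic parts are correct as written. There is, however, a genuine gap in the proof of the key lemma.

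The rearrangement claim you invoke, that any hyperplane $J$ separating two vertices can be crossed by the \emph{first} edge of some geodesic between them, is false: in a path $a\text{--}b\text{--}c$ the hyperplane dual to $bc$ separates $a$ and $c$ but cannot be crossed first from $a$. You correctly note that reorderings are constrained by non-transversality, but you never check that your $J$ is unobstructed in this sense, and for an arbitrary choice of $c$ it need not be. The clean fix uses the median operation: with $c\in C$ on the $x$-side of $J$, set $m=\mu(x,y,c)$. Then $m\in C$ because $m$ lies on a geodesic from $y$ to $c$ and $C$ is convex; $d(x,m)\le d(x,y)$ because $m$ lies on a geodesic from $x$ to $y$; and $m\neq y$ because $J$ separates $y$ from $\{x,c\}$ and hence from $m$. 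This gives $d(x,m)<d(x,y)$ with $m\in C$, the desired contradiction, and replaces the rearrangement step entirely.
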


\noindent
Below, we record a couple of statements related to projections for future use. Proofs can be found in \cite[Lemma 13.8]{Special} and \cite[Proposition 2.7]{article3} respectively.

\begin{lemma}\label{lem:HypProj}
Let $X$ be a CAT(0) cube complex, $Y \subset X$ a convex subcomplex and $x \in X$ a vertex. Any hyperplane separating $x$ from its projection onto $Y$ separates $x$ from $Y$. 
\end{lemma}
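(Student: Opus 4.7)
Let $y$ denote the projection of $x$ onto $Y$ given by Proposition \ref{projection}, and let $J$ be a hyperplane separating $x$ from $y$. I want to show that $J$ separates $x$ from every vertex of $Y$.

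The strategy is a short contradiction argument using the concatenation property in Proposition \ref{projection}. Suppose some vertex $z \in Y$ lies on the same side of $J$ as $x$. Then $J$ separates $z$ from $y$, while by hypothesis $J$ also separates $y$ from $x$. By Proposition \ref{projection}, there exists a combinatorial geodesic from $z$ to $x$ that passes through $y$. Split this geodesic at $y$: the subpath from $z$ to $y$ must cross $J$ (since $J$ separates its endpoints), and the subpath from $y$ to $x$ must likewise cross $J$. Hence the full geodesic from $z$ to $x$ crosses $J$ at least twice, contradicting the characterisation of geodesics in the theorem of Sageev recalled above.

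There is essentially no obstacle here: the only subtlety is to make sure the chosen geodesic is the one afforded by Proposition \ref{projection}, so that concatenation through $y$ is legitimate. Given that, the hyperplane counting is immediate, and we conclude that no vertex of $Y$ can share a side of $J$ with $x$, i.e.\ $J$ separates $x$ from $Y$.
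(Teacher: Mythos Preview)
Your argument is correct and is exactly the standard proof: assuming a vertex $z\in Y$ lies on the $x$-side of $J$ forces the geodesic $z\to y\to x$ provided by Proposition~\ref{projection} to cross $J$ twice, contradicting Sageev's characterisation of combinatorial geodesics. The paper does not give its own proof of this lemma but cites \cite[Lemma 13.8]{Special}, whose argument is essentially the same as yours.
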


\begin{lemma}\label{lem:HypProjSeparate}
Let $X$ be a CAT(0) cube complex and $Y \subset X$ a convex subcomplex. For every vertices $x,y \in X$, the hyperplanes separating the projections of $x$ and $y$ onto $Y$ are precisely the hyperplanes separating $x$ and $y$ which cross $Y$. As a consequence the projection onto $Y$ is $1$-Lipschitz.
\end{lemma}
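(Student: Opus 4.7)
The plan is to introduce $p$ and $q$ for the projections of $x$ and $y$ onto $Y$ respectively, and to establish the claimed characterisation of the hyperplanes separating $p$ from $q$ by a double inclusion; the $1$-Lipschitz statement will then follow immediately by counting, since combinatorial distance equals the number of separating hyperplanes.

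The crucial observation I would use upfront is that a hyperplane $H$ which crosses $Y$ has vertices of $Y$ on both of its sides, so $H$ cannot separate any vertex of $X$ from $Y$. Combining this with the contrapositive of Lemma \ref{lem:HypProj}, no hyperplane crossing $Y$ can separate $x$ from $p$, and likewise none can separate $y$ from $q$. In other words, for every hyperplane crossing $Y$, the pair $\{x,p\}$ lies in a single halfspace, and so does $\{y,q\}$.

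With this observation in hand, the first inclusion is straightforward: if $H$ separates $p$ from $q$, then since $Y$ is convex it contains a geodesic from $p$ to $q$, so $H$ crosses $Y$; and then by the observation $x$ sits on the $p$-side of $H$ while $y$ sits on the $q$-side, so $H$ separates $x$ from $y$. The reverse inclusion is equally short: given a hyperplane $H$ which crosses $Y$ and separates $x$ from $y$, the observation places $p$ on the $x$-side and $q$ on the $y$-side of $H$, whence $H$ separates $p$ from $q$.

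Once the two families of hyperplanes are identified, the Lipschitz bound $d(p,q) \leq d(x,y)$ follows since the hyperplanes separating $p$ from $q$ form a subset of those separating $x$ from $y$. I do not expect any real obstacle here: the argument is essentially a bookkeeping exercise, the only ingredient beyond definitions being the already-stated Lemma \ref{lem:HypProj}.
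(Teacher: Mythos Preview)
Your argument is correct. The paper does not give its own proof of this lemma; it simply refers the reader to \cite[Proposition 2.7]{article3}. Your double-inclusion argument, pivoting on Lemma~\ref{lem:HypProj} to show that a hyperplane crossing $Y$ can separate neither $x$ from $p$ nor $y$ from $q$, is exactly the standard proof one finds in that reference, so there is nothing to compare.
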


\noindent
Notice that the next statement is a direct consequence of Lemma \ref{lem:HypProj}:

\begin{cor}\label{cor:HypSepTwoConvex}
Let $X$ be a CAT(0) cube complex and $Y_1,Y_2 \subset X$ two disjoint and non-empty convex subcomplexes. If $y_1 \in Y_1$ and $y_2 \in Y_2$ are two vertices minimising the distance between $Y_1$ and $Y_2$, then the hyperplanes separating $y_1$ and $y_2$ are exactly the hyperplanes separating $Y_1$ and $Y_2$. 
\end{cor}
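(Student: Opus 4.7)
The plan is to prove the two inclusions separately. For the easy direction, any hyperplane that separates the subcomplexes $Y_1$ and $Y_2$ trivially separates the specific pair of vertices $y_1 \in Y_1$ and $y_2 \in Y_2$, so there is nothing to prove.

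For the converse, I would first observe that the minimising property of the pair $(y_1, y_2)$ forces $y_1$ to be the projection of $y_2$ onto the convex subcomplex $Y_1$, as characterised by Proposition \ref{projection}: indeed, $y_1$ is a vertex of $Y_1$ at minimal distance from $y_2$, and by uniqueness this must be the projection. Symmetrically, $y_2$ is the projection of $y_1$ onto $Y_2$.

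Now given any hyperplane $J$ separating $y_1$ from $y_2$, Lemma \ref{lem:HypProj} applied to $y_2$ and its projection $y_1$ onto $Y_1$ implies that $J$ separates $y_2$ from the entire subcomplex $Y_1$. Applying the same lemma with the roles reversed, $J$ also separates $y_1$ from $Y_2$. Putting these together, $Y_1$ lies in the halfspace of $J$ containing $y_1$, while $Y_2$ lies in the opposite halfspace, so $J$ separates $Y_1$ from $Y_2$, which is what we wanted.

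There is no real obstacle here; the only thing to keep straight is to apply Lemma \ref{lem:HypProj} twice, once in each direction, so that both $Y_1$ and $Y_2$ are pushed entirely to opposite sides of $J$. The disjointness and non-emptiness hypotheses are used implicitly to ensure that such minimisers $y_1, y_2$ exist and that $y_1 \neq y_2$, so that the projection characterisation applies meaningfully.
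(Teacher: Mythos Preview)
Your proof is correct and follows exactly the approach the paper intends: the paper states the corollary as a direct consequence of Lemma~\ref{lem:HypProj}, and your argument spells out precisely how, identifying $y_1$ as the projection of $y_2$ onto $Y_1$ (and vice versa) via Proposition~\ref{projection}, then applying Lemma~\ref{lem:HypProj} in both directions. One minor remark: the minimisers $y_1,y_2$ are given by hypothesis, so their existence need not be justified; disjointness is only needed so that $y_2\notin Y_1$ and $y_1\notin Y_2$, making Proposition~\ref{projection} applicable.
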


\subsection{Isometries of CAT(0) cube complexes}\label{section:Isom}

\noindent
According to \cite{HaglundAxis}, an isometry $g \in \mathrm{Isom}(X)$ of a CAT(0) cube complex is
\begin{itemize}
	\item a \emph{loxodromic isometry} if there exists a bi-infinite geodesic on which $g$ acts by translations;	
	\item an \emph{elliptic isometry} if $g$ has bounded orbits;
	\item an \emph{inversion} if a power of $g$ stabilises a hyperplane and inverts its halfspaces.
\end{itemize}
It is worth noticing that, up to subdividing the cube complex, we may suppose that inversions do not exist. 

\medskip \noindent
\textbf{Convention:} In this article, we always suppose that a CAT(0) cube complex does not admit inversions.

\medskip \noindent
When studying centralisers, natural subsets to consider are:

\begin{definition}
Let $X$ be a CAT(0) cube complex and $g \in \mathrm{Isom}(X)$ a loxodromic isometry. The \emph{minimising set} of $g$ is
$$\mathrm{Min}(g)= \left\{ x \in X \mid d(x,gx)= \inf\limits_{y \in X} d(y,gy) \right\}.$$
Equivalently, $\mathrm{Min}(g)$ is the union of all the axes of $g$.
\end{definition}

\noindent
(For a proof of the equivalence, we refer to \cite[Corollary 6.2]{HaglundAxis}.)

\medskip \noindent
The interest of minimising sets is justified in particular by the following statement, proved in \cite[Lemma 6.3]{FTT}:

\begin{lemma}\label{lem:centraliserCC}
Let $G$ be a group acting geometrically on a CAT(0) cube complex $X$ and $g \in G$ a loxodromic isometry. The centraliser $C_G(g)$ acts geometrically on $\mathrm{Min}(g)$. 
\end{lemma}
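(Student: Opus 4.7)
The plan is to verify the four conditions for a geometric action in turn: isometric (immediate), preserving $\mathrm{Min}(g)$, proper, and cocompact. For invariance, if $h \in C_G(g)$ and $x \in \mathrm{Min}(g)$, then since $h$ is an isometry commuting with $g$,
$$d(hx, g \cdot hx) = d(hx, hg \cdot x) = d(x,gx) = |g|,$$
where $|g|$ is the translation length; hence $hx \in \mathrm{Min}(g)$. Proper discontinuity is inherited directly from $G \curvearrowright X$. So the whole content lies in cocompactness.

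For cocompactness, I would fix a basepoint $x_0 \in \mathrm{Min}(g)$ and a compact fundamental domain $F$ for $G \curvearrowright X$ of diameter $D$. Given any $y \in \mathrm{Min}(g)$, cocompactness of $G$ provides $h \in G$ with $d(h^{-1}y, x_0) \leq D$. Then $h^{-1}y \in h^{-1} \mathrm{Min}(g) = \mathrm{Min}(h^{-1}gh)$, so the minimising set of the conjugate $h^{-1}gh$ meets the ball $B(x_0,D)$. The key observation is that only finitely many conjugates of $g$ share this property: if $k$ is conjugate to $g$ and $p \in \mathrm{Min}(k) \cap B(x_0,D)$, the triangle inequality gives
$$d(x_0, kx_0) \leq d(x_0,p) + d(p,kp) + d(kp, kx_0) \leq 2D + |g|,$$
and by proper discontinuity of $G \curvearrowright X$ together with local finiteness of $X$ only finitely many elements $k \in G$ satisfy $d(x_0, kx_0) \leq 2D + |g|$.

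Enumerate these conjugates $g_1 = g, g_2, \ldots, g_n$ and choose $h_i \in G$ with $h_i^{-1} g h_i = g_i$. For $y$ as above, $h^{-1}gh = g_i$ for some $i$, so $h h_i^{-1} \in C_G(g)$; setting $c = h h_i^{-1}$ yields $c^{-1} y = h_i h^{-1} y \in B(h_i x_0, D)$, and $c^{-1} y \in \mathrm{Min}(g)$ by the invariance step. Thus every $C_G(g)$-orbit in $\mathrm{Min}(g)$ meets the finite vertex set $\bigcup_{i=1}^n B(h_i x_0, D) \cap \mathrm{Min}(g)$, giving cocompactness. The main obstacle is the finiteness-of-relevant-conjugates step; once that is in place, the rest is bookkeeping of cosets $C_G(g) \cdot h_i$.
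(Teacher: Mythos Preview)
Your argument is correct: invariance and properness are immediate, and the cocompactness step via the finiteness of conjugates meeting a fixed ball is the standard Ruane-type argument for CAT(0) groups, carried out cleanly here. The paper does not supply its own proof of this lemma but simply cites \cite[Lemma~6.3]{FTT}, so there is no in-paper argument to compare against; your self-contained proof is a reasonable substitute.
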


\noindent
Rank-one isometries may be defined in many equivalent ways. Let us mention some of these equivalent definitions.

\begin{prop}\label{prop:contracting}
Let $X$ be a uniformly locally finite CAT(0) cube complex and $g \in \mathrm{Isom}(X)$ an isometry. The following conditions are equivalent:
\begin{itemize}
	\item[(i)] $g$ is a \emph{rank-one isometry}, i.e., no CAT(0)-axis of $g$ bounds a CAT(0)-halfplane;
	\item[(ii)] $g$ is contracting with respect to the CAT(0) metric;
	\item[(iii)] $g$ is a Morse isometry with respect to the CAT(0) metric;
	\item[(iv)] $g$ is contracting with respect to the combinatorial metric;
	\item[(v)] $g$ is a Morse isometry with respect to the combinatorial metric.
	\item[(vi)] $g$ skewers a pair of $L$-separated hyperplanes for some $L \geq 0$, i.e., there exist two halfspaces $A \subset B$ such that $g \cdot B \subsetneq A$ and such that there exist at most $L$ hyperplanes transverse simultaneously to the two hyperplanes bounding $A$ and $B$.
\end{itemize}
\end{prop}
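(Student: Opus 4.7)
The plan is to assemble these six equivalences from results scattered in the literature, organising them into three natural clusters: the CAT(0) conditions (i)-(iii), the combinatorial conditions (iv)-(v), and the hyperplane-skewering condition (vi). First I would establish (i) $\Leftrightarrow$ (ii) $\Leftrightarrow$ (iii): since $X$ is uniformly locally finite and $g$ stabilises a CAT(0) axis with cocompact action by $\langle g \rangle$, this is the classical Bestvina--Fujiwara/Ballmann characterisation of rank-one elements, with the cube-complex-specific formulation following from Charney--Sultan. These I would simply quote. The analogous implication (iv) $\Leftrightarrow$ (v) for the combinatorial metric follows from the same general principle that on a proper geodesic space, a periodic quasi-geodesic is Morse if and only if it is contracting.

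Next I would bridge the CAT(0) and combinatorial worlds. On a finite-dimensional, uniformly locally finite CAT(0) cube complex, the identity map between the combinatorial and the CAT(0) metric is a bi-Lipschitz equivalence on vertices (with constants depending only on the dimension). Since the Morse property is a quasi-isometry invariant, this immediately yields (iii) $\Leftrightarrow$ (v), and then (ii) $\Leftrightarrow$ (iv) follows from the already established equivalences within each cluster. Alternatively one can invoke the direct comparison results that have appeared in the cube-complex literature.

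The substantive content is the equivalence with (vi), which is the combinatorial reformulation tying rank-one isometries to the internal hyperplane structure of $X$. For (vi) $\Rightarrow$ (iv) I would argue that given $L$-separated halfspaces $A \subsetneq B$ skewered by $g$, the axis of $g$ passes transversally through the bounding hyperplanes, and the projection onto the axis of any vertex is controlled in width by $L$: an orbit segment leaving the axis must cross hyperplanes which, to be transverse simultaneously to both bounding hyperplanes, are at most $L$ in number, giving the bounded-projection property characterising combinatorial contraction. Conversely, for (v) $\Rightarrow$ (vi), I would take a pair of hyperplanes skewered far apart along an axis and use a pigeonhole/compactness argument, combined with the Morse property and uniform local finiteness, to show that the number of hyperplanes transverse to both must be uniformly bounded. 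The main obstacle is this last step: extracting the $L$-separated pair from the Morse condition requires controlling how transverse hyperplanes can accumulate along an axis, and is the technically most delicate part; however, this is exactly what Charney--Sultan (and later refinements in the author's earlier work) achieve, so in practice the proof would reduce to invoking their results and verifying that the standing conventions (no inversions, uniform local finiteness) match the hypotheses of the cited theorems.
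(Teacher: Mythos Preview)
Your proposal is correct and follows essentially the same route as the paper: cluster (i)--(iii) together via the Bestvina--Fujiwara/Sultan results, cluster (iv)--(vi) together via cube-complex-specific results (the paper cites \cite[Lemma~4.6]{MoiHypCube} and \cite[Theorem~4.2]{MR3339446}), and bridge the two clusters by the quasi-isometry of the CAT(0) and combinatorial metrics applied to the Morse property. One small caveat: your justification of $(iv)\Leftrightarrow(v)$ as a ``general principle in proper geodesic spaces'' overreaches, since Morse does not imply strongly contracting in arbitrary proper geodesic spaces; the equivalence genuinely uses the cube-complex structure, which is why the paper cites a cube-complex-specific lemma rather than a general metric statement --- but your hedge (``alternatively one can invoke the direct comparison results'') covers this.
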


\noindent
Recall that, given a metric space $M$ and an isometry $g \in \mathrm{Isom}(M)$, one says that
\begin{itemize}
	\item $g$ is a \emph{Morse isometry} if there exists some $x \in M$ such that $n \mapsto g^n \cdot x$ is a quasi-isometric embedding and if, for every $A,B >0$, there exists some $K \geq 0$ such that any $(A,B)$-quasigeodesic between two points of $\langle g \rangle \cdot x$ stays in the $K$-neighborhood of $\langle g \rangle \cdot x$.
	\item $g$ is a \emph{contracting isometry} if there exists some $x \in M$ such that $n \mapsto g^n \cdot x$ is a quasi-isometric embedding and if there exists some $D \geq 0$ such that the nearest-point projection of any ball disjoint from $\langle g \rangle \cdot x$ onto $\langle g \rangle \cdot x$ has diameter at most~$D$. 
\end{itemize}

\begin{proof}[Proof of Proposition \ref{prop:contracting}.]
The equivalences $(i) \Leftrightarrow (ii) \Leftrightarrow (iii)$ are respectively proved by \cite[Theorem 5.4]{RankContracting} and \cite[Theorem 2.14]{MR3339446}. The equivalence $(iii) \Leftrightarrow (v)$ is clear because the two metrics are quasi-isometric. Finally, the equivalences $(iv) \Leftrightarrow (v) \Leftrightarrow (vi)$ follow respectively from \cite[Lemma 4.6]{MoiHypCube} and \cite[Theorem 4.2]{MR3339446}.
\end{proof}

\subsection{Wallspaces and orientations}

\noindent
Given a set $X$, a \emph{wall} $\{A,B\}$ is a partition of $X$ into two non-empty subsets $A,B$, referred to as \emph{halfspaces}. Two points of $X$ are \emph{separated} by a wall if they belong to two distinct subsets of the partition. 

\medskip \noindent
A \emph{wallspace} $(X, \mathcal{W})$ is the data of a set $X$ and a collection of walls $\mathcal{W}$ such that any two points are separated by only finitely many walls. Such a space is naturally endowed with the pseudo-metric
$$d : (x,y) \mapsto \text{number of walls separating $x$ and $y$}.$$
As shown in \cite{ChatterjiNiblo, NicaCubulation}, there is a natural CAT(0) cube complex associated to any wallspace. More precisely, given a wallspace $(X, \mathcal{W})$, define an \emph{orientation} $\sigma$ as a collection of halfspaces such that:
\begin{itemize}
	\item for every $\{A,B\} \in \mathcal{W}$, $\sigma$ contains exactly one subset among $\{A,B\}$;
	\item if $A$ and $B$ are two halfspaces satisfying $A \subset B$, then $A \in \sigma$ implies $B \in \sigma$.
\end{itemize}
Roughly speaking, an orientation is a coherent choice of a halfspace in each wall. As an example, if $x \in X$, then the set of halfspaces containing $x$ defines an orientation. Such an orientation is referred to as a \emph{principal orientation}. Notice that, because any two points of $X$ are separated by only finitely many walls, two principal orientations are always \emph{commensurable}, ie., their symmetric difference is finite.
\begin{figure}
\begin{center}
\includegraphics[trim={0 13cm 27cm 0},clip,scale=0.4]{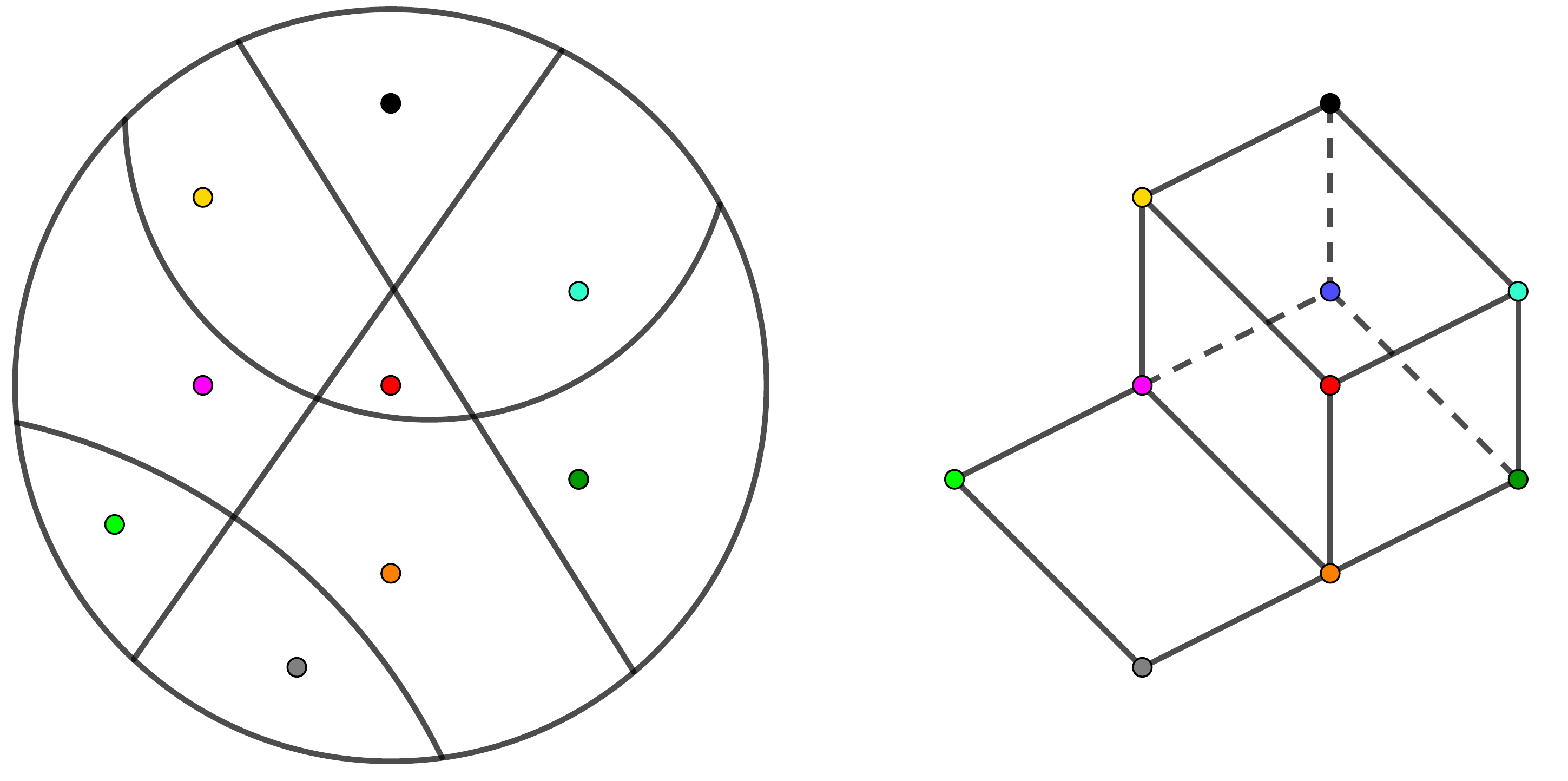}
\caption{A wallspace and its cubulation.}
\label{quotient}
\end{center}
\end{figure}

\medskip \noindent
The \emph{cubulation} of $(X, \mathcal{W})$ is the cube complex
\begin{itemize}
	\item whose vertices are the orientations within the commensurability class of principal orientations;
	\item whose edges link two orientations if their symmetric difference has cardinality two;
	\item whose $n$-cubes fill in all the subgraphs isomorphic to one-skeleta of $n$-cubes.
\end{itemize}
See Figure \ref{quotient} for an example.

\subsection{Roller boundary}

\noindent
Let $X$ be a CAT(0) cube complex. An \emph{orientation} of $X$ is an orientation of the wallspace $(X, \mathcal{W}(\mathcal{J}))$, as defined in the previous section, where $\mathcal{J}$ is the set of all the hyperplanes of $X$. The \emph{Roller compactification} $\overline{X}$ of $X$ is the set of the orientations of $X$. Usually, we identify $X$ with the image of the embedding
$$\left\{ \begin{array}{ccc} X & \to & \overline{X} \\ x & \mapsto & \text{principal orientation defined by $x$} \end{array} \right.$$
and we define the \emph{Roller boundary} of $X$ by $\mathfrak{R}X:= \overline{X} \backslash X$. 

\medskip \noindent
The Roller compactification is naturally a cube complex. Indeed, if we declare that two orientations are linked by an edge if their symmetric difference has cardinality two and if we declare that any subgraph isomorphic to the one-skeleton of an $n$-cube is filled in by an $n$-cube for every $n \geq 2$, then $\overline{X}$ is a disjoint union of CAT(0) cube complexes. Each such component is referred to as a \emph{cubical component} of $\overline{X}$. See Figure \ref{Roller} for an example. Notice that the distance (possibly infinite) between two vertices of $\overline{X}$ coincides with the number of hyperplanes which separate them, if we say that a hyperplane $J$ separates two orientations when they contain different halfspaces delimited by $J$. 
\begin{figure}
\begin{center}
\includegraphics[trim={0 12cm 41cm 0},clip,scale=0.45]{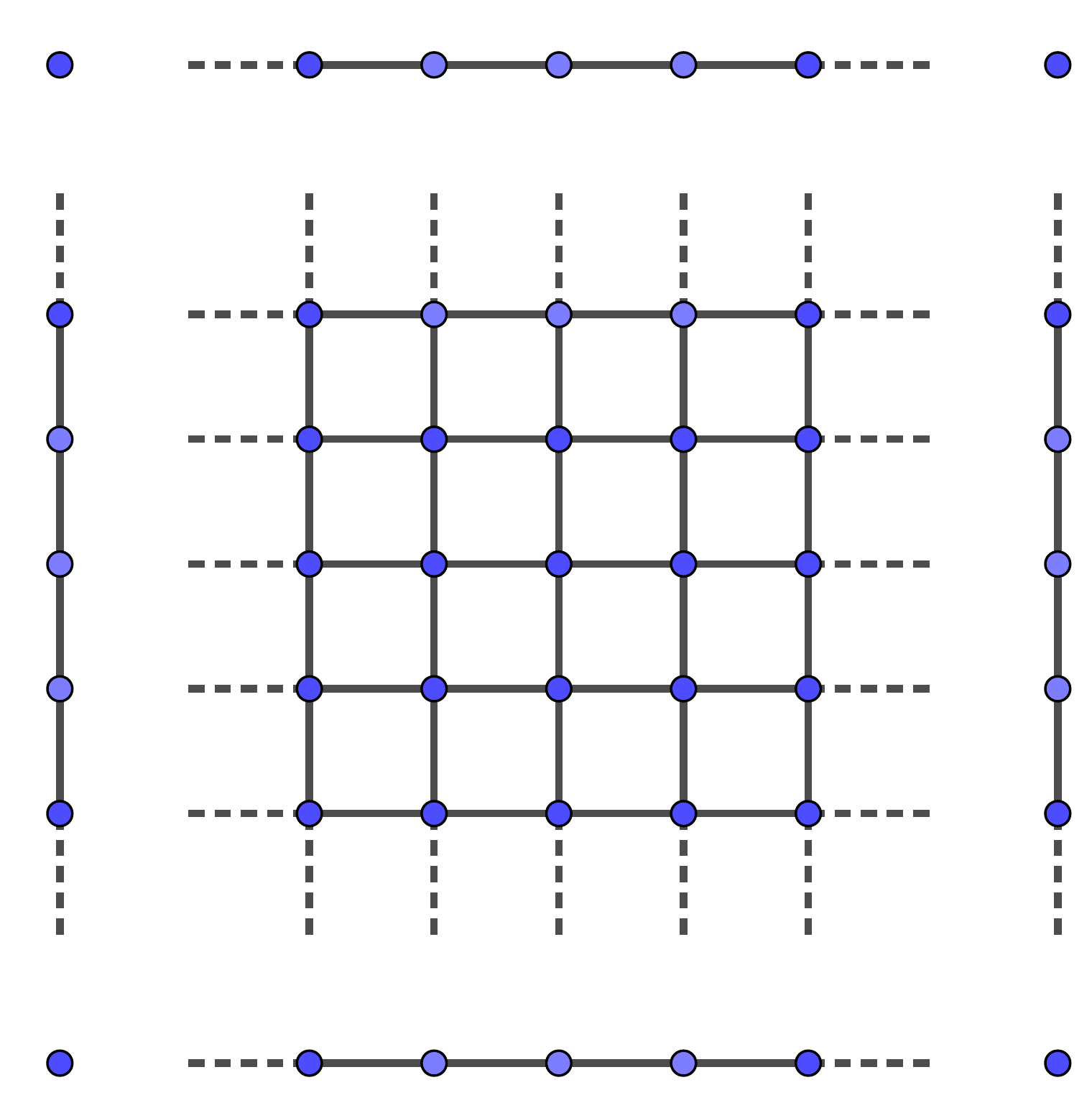}
\caption{Roller compactification of $\mathbb{R}^2$. It contains nine cubical components.}
\label{Roller}
\end{center}
\end{figure}
Two orientations belong to a common cubical component if and only if they differ only on finitely many hyperplanes. A hyperplane separates two orientations if they differ on it.

\medskip \noindent
Interestingly, the projection of a vertex in a finite-dimensional CAT(0) cube complex onto a cubical component of its Roller boundary can be defined:

\begin{prop}\label{prop:ProjInf}
Let $X$ be a finite-dimensional CAT(0) cube complex, $x \in X$ a vertex and $Y \subset \mathfrak{R}X$ a cubical component. There exists a unique point $\xi \in Y$ such that the hyperplanes separating $x$ from $\xi$ are precisely the hyperplanes separating $x$ from $Y$.
\end{prop}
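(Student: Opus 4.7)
The plan is to exhibit $\xi$ explicitly as an orientation of $X$, check that this orientation is a valid point of $\overline{X}$, prove that it lies in the component $Y$, and verify that it satisfies the prescribed separation condition. I would define $\xi$ hyperplane by hyperplane: on each hyperplane $J$ that does \emph{not} cross $Y$, let $\xi$ choose the (unique) halfspace containing $Y$; on each hyperplane $J$ that does cross $Y$, let $\xi$ choose the halfspace containing $x$. Equivalently, $\xi$ agrees with $x$ on every hyperplane that does not separate $x$ from $Y$ and disagrees with $x$ on those that do, so both the prescribed separation property and the uniqueness of $\xi$ are immediate from the definition (any competing point would be forced to this exact orientation on every hyperplane).

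I would then verify that this prescription is consistent under inclusion of halfspaces, so that $\xi$ is a genuine orientation. This is a short case analysis: given nested halfspaces $A \subsetneq B$ with $A \in \xi$, one separates cases according to whether the bounding hyperplanes $J_A, J_B$ cross $Y$. The subtle case is when $J_A$ crosses $Y$ but $J_B$ does not; picking a vertex $\eta \in Y \cap A$ (which exists because $J_A$ crosses $Y$) and noting $\eta \in A \subset B$ forces $Y \subset B$, so $B \in \xi$.

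The main obstacle will be to show $\xi \in Y$. Fix any $\eta_0 \in Y$: since $\xi$ and $\eta_0$ already agree on every hyperplane not crossing $Y$, it suffices to prove that the set $D$ of hyperplanes crossing $Y$ on which $x$ and $\eta_0$ lie on opposite sides is finite. Suppose for contradiction $D$ is infinite. Finite-dimensionality of $X$ bounds every pairwise-transverse family of hyperplanes, so Ramsey's theorem extracts an infinite pairwise non-transverse subfamily $\{J_k\}_{k \geq 1} \subset D$. Writing $A_k$ for the halfspace of $J_k$ containing $x$, the fact that $\eta_0 \in \bigcap_k A_k^c$ rules out the configuration $A_k \cup A_\ell = X$ (which would force $A_k^c \cap A_\ell^c = \emptyset$); hence the $A_k$ are pairwise nested, and after passing to a subsequence may be assumed to form either a strictly descending or a strictly ascending chain.

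If the chain descends $A_1 \supsetneq A_2 \supsetneq \cdots$, one first observes that $N(J_k) \cap A_{k+1} = \emptyset$: otherwise an edge dual to $J_k$ at a vertex in $A_{k+1}$ would also be dual to $J_{k+1}$, contradicting $J_k \neq J_{k+1}$. Consequently $J_{k+1}$ lies between $x$ and $N(J_k)$, and $d(x, J_k)$ is a strictly decreasing sequence of non-negative integers, which is absurd. If the chain ascends $A_1 \subsetneq A_2 \subsetneq \cdots$, then because $J_1$ crosses $Y$ there exists $\eta^* \in Y \cap A_1$; this $\eta^*$ lies in every $A_k$ and therefore disagrees with $\eta_0$ on every $J_k$, contradicting the fact that $\eta^*, \eta_0 \in Y$ differ on only finitely many hyperplanes. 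The ascending case is where the cubical-component structure of $Y$ is crucial; the descending case uses only the combinatorial geometry of $X$.
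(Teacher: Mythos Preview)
Your proof is correct and follows essentially the same route as the paper: define the candidate orientation hyperplane by hyperplane, verify consistency by the same case analysis on nested halfspaces, and show membership in $Y$ by extracting an infinite pairwise non-transverse (hence nested) family from the disputed hyperplanes. The only difference is in this last step: the paper re-indexes so that the halfspaces containing the chosen point of $Y$ form a decreasing chain and then invokes Lemma~\ref{lem:DecreasingInf}, whereas you split into ascending and descending chains of the halfspaces containing $x$. Your ascending case is exactly the content of Lemma~\ref{lem:DecreasingInf}; your descending case (handled via the strictly decreasing integers $d(x,N(J_k))$) is the extra piece needed to justify why the paper's re-indexing can always be arranged to land in the ascending situation, so in effect you have made explicit a step the paper passes over quickly.
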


\noindent
In the sequel, the point $\xi$ will be referred to as the \emph{projection} of $x$ onto $Y$. Before turning to the proof of Proposition \ref{prop:ProjInf}, we begin by proving a preliminary lemma.

\begin{lemma}\label{lem:DecreasingInf}
Let $X$ be a CAT(0) cube complex, $\xi \in \mathfrak{R}X$ a point at infinity and $(D_i)$ a decreasing sequence of halfspaces containing $\xi$. For every $i \geq 0$, $D_i$ contains the cubical component of $\xi$.
\end{lemma}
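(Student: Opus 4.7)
My approach is a direct proof by contradiction exploiting the coherence axiom built into the definition of an orientation: if $A \subseteq B$ are halfspaces and $A$ belongs to an orientation $\eta$, then so does $B$. Equivalently, $B^c \in \eta$ implies $A^c \in \eta$, where here $D^c$ denotes the opposite halfspace delimited by the same hyperplane as $D$.

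Fixing an arbitrary $\eta$ in the cubical component $Y$ of $\xi$, I would aim to show $\eta \in D_i$ for every $i \geq 0$. Supposing for contradiction that $\eta \notin D_i$ for some $i$, i.e. that $D_i^c \in \eta$, the first key step is to propagate this down the chain: for every $j \geq i$, the inclusion $D_j \subseteq D_i$ gives $D_i^c \subseteq D_j^c$, and coherence of $\eta$ then forces $D_j^c \in \eta$. Consequently, $\xi$ and $\eta$ orient oppositely every hyperplane $J_j$ bounding $D_j$ for $j \geq i$. The second key step is to observe that nested distinct halfspaces come from distinct hyperplanes (a hyperplane bounds only two halfspaces), so the bounding hyperplanes $J_j$ are pairwise distinct on the tail of the chain where $(D_j)$ genuinely decreases. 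The final step is to draw the contradiction: infinitely many distinct $J_j$'s would then separate $\xi$ from $\eta$, contradicting the defining property of a cubical component that $\xi$ and $\eta$ differ on only finitely many hyperplanes.

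I do not anticipate a substantial obstacle: the coherence axiom is exactly the mechanism that cascades one isolated disagreement into infinitely many, and the finiteness of the symmetric difference characterising a cubical component is what generates the contradiction. The only mild subtlety is ensuring that the tail of $(D_i)$ carries infinitely many distinct halfspaces, which is the essential setting of the lemma; in any degenerate case the statement reduces to applying the same argument to a strictly decreasing subsequence.
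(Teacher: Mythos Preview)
Your proof is correct and follows essentially the same route as the paper's: assume some point $\eta$ of the cubical component lies in $D_k^c$, use the nesting $D_j \subset D_k$ to propagate $\eta \in D_j^c$ for all $j \geq k$, and conclude that infinitely many hyperplanes separate $\eta$ from $\xi$, contradicting their lying in a common cubical component. If anything you are slightly more explicit than the paper about why the bounding hyperplanes $J_j$ are pairwise distinct along a strictly decreasing tail.
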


\begin{proof}
Let $Y \subset \mathfrak{R}X$ denote the cubical component containing $\xi$ and, for every $i \geq 0$, let $J_i$ denote the hyperplane delimiting $D_i$. Assume for contradiction that there exists some $k \geq 0$ such that $J_k$ separates at least two points of $Y$. Fix a point $\zeta \in Y$ such that $J$ separates $\zeta$ from $\xi$. Notice that $\zeta \in D_k^c \subset D_i^c$ and $\xi \in D_i$ for every $i \geq k$. Consequently, the hyperplanes $J_k,J_{k+1}, \ldots$ all separate $\zeta$ and $\xi$. But this is impossible because the fact that $\zeta$ and $\xi$ both belong to the cubical component $Y$ implies that they are separated by only finitely many hyperplanes of $X$. 
\end{proof}

\begin{proof}[Proof of Proposition \ref{prop:ProjInf}.]
Let $\sigma$ be the collection of halfspaces defined as follows. If $J$ is a hyperplane which separates two points of $Y$, then $\sigma$ contains the halfspace delimited by $J$ containing $x$ and not its complement. Otherwise, if $J$ does not separate two points of $Y$, then $\sigma$ contains the halfspace delimited by $J$ containing $Y$ and not its complement. Clearly, $\sigma$ is the only possible candidate for our point of $Y$. Now, we need to verify that $\sigma$ is an orientation, and next, that as a point of $\mathfrak{R}X$ it belongs to $Y$.

\medskip \noindent
Let $A$ and $B$ be two halfspaces satisfying $A \subset B$ and $A \in \sigma$. We claim that $B$ belongs to $\sigma$. We distinguish three cases.
\begin{itemize}
	\item If $Y$ is included into $A$, then $Y$ must be contained into $B$ as well, hence $B \in \sigma$.
	\item If the hyperplanes delimiting $A$ and $B$ both separate at least two points of $Y$, then $x$ must belong to $A$, and so to $B$, which implies that $B \in \sigma$.
	\item If the hyperplane delimiting $A$ (resp. $B$) separates (resp. does not separate) two points of $Y$, then $Y$ must be contained into $B$, hence $B \in \sigma$.
\end{itemize}
Thus, we have proved that $\sigma$ is an orientation. Now, assume for contradiction that $\sigma$ does not belong to $Y$. As a consequence, if we fix a point $\xi \in Y$, there exist infinitely many hyperplanes $J_1, J_2, \ldots$ separating $\sigma$ from $\xi$. Because $X$ is finite-dimensional, up to extracting a subspace, we suppose that the $J_i$'s are pairwise disjoint. Moreover, up to re-indexing our sequence, we suppose that $J_i$ separates $J_{i-1}$ and $J_{i+1}$ for every $i \geq 2$. Consequently, if $J_i^+$ denotes the halfspace delimited by $J_i$ which contains $\xi$ for every $i \geq 1$, then $(J_i^+)$ is a decreasing sequence of halfspaces which all contain $\xi$. It follows from Lemma \ref{lem:DecreasingInf} that the $J_i$'s do not cross $Y$, hence $Y \subset J_i^+$ for every $i \geq 1$. Consequently, we must have $J_i^+ \in \sigma$ for every $i \geq 1$ by construction of $\sigma$, a contradiction.
\end{proof}

\noindent
We conclude this subsection by proving a last preliminary lemma. It shows that a cubical component of a uniformly locally finite CAT(0) cube complex must be locally finite. 

\begin{lemma}\label{lem:LocallyFiniteInf}
Let $X$ be a CAT(0) cube complex. Assume that there exists some $N \geq 1$ such that any vertex of $X$ admits at most $N$ neighbors. Then any cubical component of $X$ satisfies the same property. 
\end{lemma}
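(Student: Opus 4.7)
The plan is to show that if a vertex $\xi$ of a cubical component $Y \subset \overline{X}$ has $k$ distinct neighbours, then some vertex $x \in X$ has at least $k$ distinct neighbours; the hypothesis on $X$ then forces $k \leq N$. First, I would characterise the neighbours of $\xi$ in $Y$. Since two orientations are adjacent in $\overline{X}$ precisely when their symmetric difference is a single pair $\{D, D^c\}$, a direct check of the nesting axiom shows that replacing $D \in \xi$ by $D^c$ yields a valid orientation if and only if $D$ is \emph{minimal in $\xi$}, meaning that no halfspace $A \in \xi$ satisfies $A \subsetneq D$. The $k$ distinct neighbours of $\xi$ thus correspond to $k$ minimal halfspaces $D_1, \ldots, D_k \in \xi$ bounded by pairwise distinct hyperplanes $J_1, \ldots, J_k$ of $X$.

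The main step is to show that the carriers $N(J_1), \ldots, N(J_k)$ pairwise intersect in $X$. If some $N(J_i) \cap N(J_j)$ were empty, Corollary \ref{cor:HypSepTwoConvex} would supply a hyperplane $J$ with $N(J_i) \subset D$ and $N(J_j) \subset D^c$ for some halfspace $D$ of $J$. Since $J$ crosses neither $J_i$ nor $J_j$, exactly one halfspace among $\{D_i, D_i^c\}$ lies in $D$ and exactly one among $\{D_j, D_j^c\}$ lies in $D^c$. A short case analysis on whether $D$ or $D^c$ belongs to $\xi$, using the orientation axiom, then yields either that $\xi$ simultaneously contains $D$ and $D^c$, or that one of $D_i, D_j$ properly contains some halfspace of $\xi$, contradicting its minimality. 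The point here is that the naive hope that the $J_i$ pairwise cross fails (consider, for example, two adjacent edges in a tree at a vertex), so one must work with carriers rather than with transversality.

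Finally, since the carriers $N(J_i)$ are pairwise intersecting convex subcomplexes of $X$, the Helly property for convex subcomplexes of a CAT(0) cube complex gives $\bigcap_{i=1}^k N(J_i) \neq \emptyset$. Any vertex $x$ in this intersection admits an edge dual to each $J_i$, hence has at least $k$ distinct neighbours in $X$, and the hypothesis yields $k \leq N$. The pairwise-intersection step is the main technical obstacle, combining the minimality of the $D_i$'s with the separation of disjoint convex subcomplexes; once this is in place, the Helly argument is routine.
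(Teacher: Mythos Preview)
Your proposal is correct and follows essentially the same route as the paper: identify the hyperplanes $J_1,\ldots,J_k$ corresponding to the neighbours of $\xi$, show their carriers pairwise intersect via a separating-hyperplane argument, apply Helly, and read off $k$ edges at the resulting vertex of $X$. The only cosmetic difference is in the contradiction for the pairwise-intersection step: you unpack the minimality of the $D_i$'s explicitly in a case analysis, whereas the paper observes more directly that a hyperplane $J$ separating $N(J_i)$ from $N(J_j)$ would also separate $y_i$ from $y_j$, giving three distinct hyperplanes between two points at distance~$2$ in~$\overline{X}$.
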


\begin{proof}
Fix a vertex $x \in \overline{X}$ and let $y_1, \ldots, y_k$ denote its neighbors. For every $1 \leq i \leq n$, let $J_i$ denote the unique hyperplane separating $x$ from $y_i$. 

\medskip \noindent
Fix two distinct indices $1 \leq i , j \leq k$. If the carriers $N(J_i)$ and $N(J_j)$ are disjoint, then it follows from Corollary \ref{cor:HypSepTwoConvex} that there exists a hyperplane $J$ separating $N(J_i)$ and $N(J_j)$. Because $y_i$ belongs the halfspace delimited by $J_i$ which does not contain $J_j$ and that $y_j$ belongs similarly to the halfspace delimited by $J_j$ which does not contain $J_i$, necessarily $J$ separates $y_i$ and $y_j$. But $y_i$ and $y_j$ are within distance two in $\overline{X}$, so that $J_i,J_j,J$ cannot define three distinct hyperplanes separating $y_i$ and $y_j$. Therefore, the carriers $N(J_i)$ and $N(J_j)$ have to intersect.

\medskip \noindent
Because the carriers $N(J_1), \ldots, N(J_k)$ pairwise intersect, according to Helly's property, there exists a vertex $z \in X$ which belongs to the total intersection $\bigcap\limits_{i=1}^k N(J_i)$. By noticing that each $J_i$ defines a distinct edge having $x$ as an endpoint, we conclude that $k \leq N$, as desired.
\end{proof}

\subsection{Median algebras}

\noindent
A \emph{median algebra} $(X,\mu)$ is the data of a set $X$ and a map $\mu : X \times X \times X \to X$ satisfying the following conditions:
\begin{itemize}
	\item $\mu(x,y,y)=y$ for every $x,y \in X$;
	\item $\mu(x,y,z)= \mu(z,x,y)= \mu (x,z,y)$ for every $x,y,z \in X$;
	\item $\mu\left( \mu(x,w,y), w,z \right) = \mu \left( x,w, \mu(y,w,z) \right)$ for every $x,y,z,w \in X$.
\end{itemize}
The \emph{interval} between two points $x,y \in X$ is
$$I(x,y)= \left\{ z \in X \mid \mu(x,y,z)=z \right\};$$
and a subset $Y \subset X$ is \emph{convex} if $I(x,y) \subset Y$ for every $x,y \in Y$. In this article, we are only interested in median algebras whose interval are finite; they are referred to as \emph{discrete} median algebras.

\medskip \noindent
As proved in \cite{NicaCubulation}, a discrete median algebra is naturally a wallspace. Indeed, let us say that $Y \subset X$ is a \emph{halfspace} if $Y$ and $Y^c$ are both convex. Then a \emph{wall} of $X$ is the data of halfspace and its complement, and it turns out that only finitely many walls separate two given point of $X$. The \emph{cubulation} of a discrete median algebra refers to the cubulation of this wallspace. In this specific case, it turns out that any orientation commensurable to a principal orientation must be a principal orientation itself. Consequently, the cubulation of a discrete median algebra $X$ coincides with the cube complex
\begin{itemize}
	\item whose vertex-set is $X$;
	\item whose edges link two points of $X$ if they are separated by a single wall;
	\item whose $n$-cubes fill in every subgraph of the one-skeleton isomorphic to the one-skeleton of an $n$-cube, for every $n \geq 2$. 
\end{itemize}
Therefore, a discrete median algebra may be naturally identified with its cubulation, and so may be thought of as a CAT(0) cube complex. The \emph{dimension} and the \emph{Roller compactification} of a discrete median algebra coincides with the dimension and the Roller compactification of its cubulation. 

\medskip \noindent
Conversely, a CAT(0) cube complex $X$ naturally defines a discrete median algebra (see \cite{mediangraphs} and \cite[Proposition 2.21]{MR2413337}). Indeed, for every triple of vertices $x,y,z \in X$, there exists a unique vertex $\mu(x,y,z) \in X$ satisfying
$$\left\{ \begin{array}{l} d(x,y)= d( x , \mu(x,y,z))+ d(\mu(x,y,z), y) \\ d(x,z)= d(x,\mu(x,y,z))+d(\mu(x,y,z),z) \\ d(y,z)= d(y,\mu(x,y,z))+d(\mu(x,y,z),z) \end{array} \right..$$
Otherwise saying, $I(x,y) \cap I(y,z) \cap I(x,z) = \{ \mu(x,y,z)\}$. The vertex $\mu(x,y,z)$ is referred to as the \emph{median point} of $x,y,z$. Then $(X,\mu)$ is a discrete median algebra, motivating the following terminology:

\begin{definition}
Let $X$ be a CAT(0) cube complex. A \emph{median subalgebra} $Y \subset X$ is a set of vertices stable under the median operation.
\end{definition}

\noindent
We conclude this section by recording a couple of preliminary lemmas. The following statement is precisely what is shown during the proof of \cite[Lemma 2.10]{FTT}.

\begin{lemma}\label{lem:WallHyp}
Let $X$ be a CAT(0) cube complex and $Y \subset X$ a median subset. The walls of $Y$ thought of as a median algebra coincide with the traces of the hyperplanes of~$X$. 
\end{lemma}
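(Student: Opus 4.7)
The plan is to prove a double inclusion: every non-trivial trace of a hyperplane of $X$ on $Y$ is a wall of $Y$, and conversely every wall of $Y$ arises in this way.

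\textbf{Traces give walls.} If $J$ is a hyperplane of $X$ with halfspaces $H$ and $H^c$, then both $H$ and $H^c$ are convex subgraphs of $X$, and in particular they are closed under the median operation $\mu$ of $X$. Since $Y$ is a median subalgebra, its own median operation is simply the restriction of $\mu$; consequently the intersections $H \cap Y$ and $H^c \cap Y$ are both convex in $Y$ as a median algebra, and whenever both are non-empty they constitute a wall of $Y$.

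\textbf{Walls come from traces.} Fix a wall $\{A,B\}$ of $Y$. The cubulation of the discrete median algebra $Y$, recalled in the preceding subsection, is a CAT(0) cube complex with vertex-set $Y$ whose hyperplanes are in bijection with the walls of $Y$; in particular, its edges connect pairs of vertices separated by exactly one wall. Along any edge-path in this cubulation from a point of $A$ to a point of $B$, some edge must cross the wall $\{A,B\}$, yielding $a \in A$ and $b \in B$ for which $\{A,B\}$ is the \emph{only} wall of $Y$ separating them. Now pick any hyperplane $J$ of $X$ separating $a$ and $b$ (one exists because $a \neq b$). By the first direction, the trace of $J$ on $Y$ is a wall of $Y$, and since it must separate $a$ from $b$, it is forced to equal $\{A,B\}$. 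Hence $J$ realises the wall $\{A,B\}$ as a trace.

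The main subtlety to handle carefully is the existence of the pair $(a,b)$ above, that is, the fact that the wall $\{A,B\}$ of the median algebra $Y$ is dual to some edge of its cubulation. This amounts to the connectedness of the one-skeleton of the cubulation of a discrete median algebra, which follows from the Chepoi--Roller identification of median graphs with one-skeleta of CAT(0) cube complexes; alternatively one can produce the pair concretely from arbitrary $a_0 \in A, b_0 \in B$ by iteratively applying the median operation in $Y$ to remove, one at a time, any additional wall separating the current endpoints without reintroducing $\{A,B\}$, using the median-convexity of $A$ and $B$ to keep the endpoints on their respective sides.
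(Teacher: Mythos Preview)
Your proof is correct. The paper itself does not give a proof of this lemma; it simply cites \cite[Lemma 2.10]{FTT} and states that the result is established in the course of that proof. Your argument is therefore a welcome self-contained substitute.

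Both directions are sound. For the forward direction, halfspaces of $X$ are convex and hence median-closed, so their intersections with the median subalgebra $Y$ are convex in $Y$; when both sides are non-empty this is precisely a wall of $Y$. For the converse, the key point is indeed finding $a\in A$ and $b\in B$ separated by a single wall of $Y$: since $Y$ inherits finite intervals from $X$, it is a discrete median algebra, its cubulation is a connected CAT(0) cube complex with vertex set $Y$, and any edge-path from $A$ to $B$ must contain an edge crossing $\{A,B\}$; the unique wall separating the endpoints of that edge is then forced to be $\{A,B\}$. The final step---picking any hyperplane $J$ of $X$ separating $a$ and $b$ and observing that its trace is a wall of $Y$ separating $a$ from $b$, hence equal to $\{A,B\}$---is clean.

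Your sketched alternative (iterating medians to strip away extra walls) would also work but needs a little more care to justify that each median step removes at least one separating wall without adding new ones; the connectedness argument you give first is cleaner and suffices.
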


\noindent
Our second statement is:

\begin{lemma}\label{lem:ConvexHullInter}
Let $X$ be a CAT(0) cube complex and $\gamma$ a bi-infinite geodesic. Let $\zeta,\xi \in \mathfrak{R}X$ denote the endpoints at infinity of $\gamma$. The union of all the geodesics having their endpoints on $\gamma$ coincides with $I(\zeta,\xi)$. As a consequence, $I(\zeta,\xi)$ is the convex hull of $\gamma$. 
\end{lemma}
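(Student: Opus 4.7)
The plan is to prove both inclusions of the equality between $I(\zeta,\xi)$ and the union of geodesics of $X$ with endpoints on $\gamma$ by hyperplane combinatorics, using two facts: the hyperplanes separating $\zeta$ from $\xi$ are exactly those crossed by $\gamma$ (each crossed exactly once, since $\gamma$ is a geodesic); and a vertex $z\in X$ belongs to $I(\zeta,\xi)$ iff no hyperplane separates $z$ from both $\zeta$ and $\xi$, equivalently every hyperplane separating $z$ from $\zeta$ (or from $\xi$) also separates $\zeta$ from $\xi$.

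For the easy inclusion, suppose $z$ lies on a geodesic from $p$ to $q$ with $p,q\in\gamma$, and let $J$ be a hyperplane separating $z$ from $\zeta$. If $J$ separates $p$ and $q$, then $\gamma$ crosses $J$ between them. Otherwise, since $z\in I(p,q)$, the vertices $p,q,z$ lie on a common side of $J$, which is opposite from $\zeta$; therefore the ray of $\gamma$ from $p$ to $\zeta$ must cross $J$. Either way $\gamma$ crosses $J$, so $\zeta$ and $\xi$ lie on opposite sides of $J$. A symmetric argument handles the case where $J$ separates $z$ from $\xi$, giving $z\in I(\zeta,\xi)$.

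For the reverse inclusion, fix a vertex $z\in I(\zeta,\xi)$, parameterize $\gamma$ by $\mathbb{Z}$ with $\gamma(-\infty)=\zeta$ and $\gamma(+\infty)=\xi$, and for each hyperplane $J$ crossed by $\gamma$ let $t_J\in\mathbb{Z}$ denote its unique crossing time. Let $A$ (resp.\ $B$) be the collection of hyperplanes separating $z$ from $\zeta$ (resp.\ from $\xi$); by the characterization above, every element of $A\cup B$ is crossed by $\gamma$. The key observation is that $\{t_J:J\in A\}$ is bounded above and $\{t_J:J\in B\}$ is bounded below: any $J\in A$ with $t_J>0$ places $\gamma(0)$ on the $\zeta$-side of $J$ and $z$ on the $\xi$-side, hence separates $z$ from $\gamma(0)$, but only finitely many hyperplanes can do so because $d(z,\gamma(0))<+\infty$; the case of $B$ is symmetric. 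Choose $N\in\mathbb{N}$ strictly larger than $\sup\{t_J:J\in A\}$ and $-\inf\{t_J:J\in B\}$, and set $p:=\gamma(-N)$, $q:=\gamma(N)$. A case analysis on whether a hyperplane lies in $A$, in $B$, or in neither shows that every hyperplane separating $z$ from $p$ or from $q$ must have crossing time strictly inside $(-N,N)$, hence separates $p$ from $q$; therefore $z\in I(p,q)$, so $z$ lies on a geodesic between two points of $\gamma$.

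Finally, $I(\zeta,\xi)$ coincides with the intersection of all halfspaces containing both orientations $\zeta$ and $\xi$; such a halfspace has its bounding hyperplane not crossed by $\gamma$ and contains $\gamma$, and conversely any halfspace containing $\gamma$ must contain its endpoints $\zeta$ and $\xi$. Thus $I(\zeta,\xi)$ is the intersection of the halfspaces containing $\gamma$, which is by definition the convex hull of $\gamma$. I expect the main obstacle to be the boundedness of $\{t_J:J\in A\}$ and $\{t_J:J\in B\}$ together with the case-by-case check that $z\in I(p,q)$, but both rely only on the finiteness of distances in $X$ and the single-crossing property of geodesics.
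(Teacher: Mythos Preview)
Your argument is correct and follows essentially the same hyperplane-separation strategy as the paper: both inclusions are obtained by tracking which hyperplanes separate a vertex from $\zeta$ or $\xi$ and using that only finitely many hyperplanes separate two vertices of $X$. The one notable variation is the final step: the paper verifies convexity of $I(\zeta,\xi)$ directly and then concludes, whereas you identify $I(\zeta,\xi)$ with the intersection of all halfspaces containing $\gamma$, which is a slightly cleaner route and bypasses the separate convexity check.
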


\begin{proof}
Let $\sigma$ be a geodesic between two vertices $a^-$ and $a^+$ of $\gamma$. Of course, if $\gamma^\pm$ denotes the subray of $\gamma$ between $a^\pm$ and $\gamma(\pm \infty)$, then the concatenation $\ell = \gamma^- \cup \sigma \cup \gamma^+$ is also a geodesic. As a consequence, a hyperplane of $X$ cannot separate a vertex $x$ of $\sigma$ from $\{\zeta,\xi\}$, because otherwise it would cross $\ell$ twice. Therefore, no hyperplane separates $x$ from the median point $\mu(x,\zeta,\xi)$, which precisely means that $x= \mu(x,\zeta,\xi)$ or equivalently $x \in I(\zeta,\xi)$. 

\medskip \noindent
Conversely, let $x \in I(\zeta,\xi)$ be a vertex. Fix a vertex $y \in \gamma$. Because there exist only finitely many hyperplanes separating $x$ and $y$, there an $n \geq 1$ such that all the hyperplanes which separate $x$ and $y$ and which cross $\gamma$ have to cross $\gamma$ between $\gamma(-n)$ and $\gamma(n)$. We also take $n$ sufficiently large so that $y$ lies between $\gamma(-n)$ and $\gamma(n)$ along $\gamma$. Now, we want to prove that $x$ belongs to a geodesic between $\gamma(-n)$ and $\gamma(n)$, or equivalently that $\mu(x,\gamma(-n),\gamma(-n))=x$. If this equality does not hold, then there must exist a hyperplane $J$ separating $x$ from $\{\gamma(-n),\gamma(n)\}$. Necessarily, $J$ separates $x$ from $y$, so that it follows from our choice of $n$ that $J$ does not cross $\gamma$, i.e., $J$ separates $x$ from $\{\zeta,\xi\}$. But this contradicts the equality $\mu(x,\zeta,\xi)= x$.

\medskip \noindent
Thus, we have proved that $I(\zeta,\xi)$ coincides with the union of all the geodesics having their endpoints on $\gamma$. In order to show the second assertion of our lemma, it remains to show that the interval $I(\zeta,\xi)$ is convex. So let $x,y \in I(\zeta,\xi)$ be two vertices and let $z$ be a vertex of a geodesic $[x,y]$ between $x$ and $y$. As a consequence of what we have just proved, $x$ and $y$ belong to geodesics with endpoints on $\gamma$. Therefore, if $J$ is a hyperplane which does not cross $\gamma$, then $x$ and $y$ have to belong to the same halfspace delimited by $J$ as $\gamma$, and for the same reason the vertex $z$ must belong to this halfspace as well. It follows that no hyperplane separates $z$ from $\gamma$, or alternatively from $\{\zeta,\xi\}$. The conclusion is that $z$ belongs to $I(\zeta,\xi)$. 
\end{proof}

\section{Stable minimising sets of loxodromic isometries}\label{section:SMin}

\noindent
In this section, our goal is to prove the following decomposition theorem about the \emph{stable minimising set} $\mathrm{SMin}(g)= \bigcup\limits_{n \geq 1} \mathrm{Min}(g^n)$ of a loxodromic isometry of a CAT(0) cube complex: 

\begin{thm}\label{thm:Iso}
Let $X$ be a uniformly locally finite CAT(0) cube complex and $g \in \mathrm{Isom}(X)$ a loxodromic isometry. Fix an axis $\gamma$ of $g$, let $\zeta,\xi \in \mathfrak{R}X$ denote its points at infinity, and let $Y \subset \mathfrak{R}X$ be the cubical component containing $\xi$. Then $\mathrm{SMin}(g)$ is a median subalgebra of $X$ and
$$\left\{ \begin{array}{ccc} \mathrm{SMin}(g) & \to & Y \times I(\zeta,\xi) \\ x & \mapsto & \left(\pi_Y( x), \mu(x,\zeta,\xi) \right) \end{array} \right.$$
is an isomorphism of median algebras, where $\pi_Y : X \to Y$ is the projection onto $Y$.
\end{thm}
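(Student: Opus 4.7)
My plan is to work in the wallspace language and classify hyperplanes of $X$ as Type~A (crossing $\gamma$, equivalently skewered by $g$), Type~B (crossing neither $\gamma$ nor $Y$), or Type~C (crossing $Y$ but not $\gamma$); I will show these three types partition all hyperplanes by ruling out the remaining combination. In this language, the target $Y\times I(\zeta,\xi)$ corresponds exactly to the wallspace built from Type~C and Type~A hyperplanes respectively.

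I first check that $F(x)=(\pi_{Y}(x),\mu(x,\zeta,\xi))$ is a median morphism: both coordinates are, the first by the halfspace characterisation of $\pi_{Y}$ from the proof of Proposition~\ref{prop:ProjInf}, and the second because fixing two entries of the ternary median always yields one. Next I would show that $\mathrm{SMin}(g)$ is a median subalgebra by a hyperplane count. After passing to a common power so that $x,y,z\in\mathrm{Min}(g^{n})$, the majority rule says a hyperplane $J$ separates $\mu(x,y,z)$ from $g^{n}\mu(x,y,z)=\mu(g^{n}x,g^{n}y,g^{n}z)$ iff it separates the majority-side of $\{x,y,z\}$ from that of $\{g^{n}x,g^{n}y,g^{n}z\}$. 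Since $x,y,z$ lie on axes of $g^{n}$, any $g^{n}$-skewered hyperplane separates each of the three pairs, while any non-skewered one separates none. Hence $d(\mu(x,y,z),g^{n}\mu(x,y,z))=\|g^{n}\|$, so $\mu(x,y,z)\in\mathrm{Min}(g^{n})\subseteq\mathrm{SMin}(g)$.

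The heart of the argument is showing that any hyperplane separating two vertices of $\mathrm{SMin}(g)$ is of Type~A or Type~C. I would split this into two sub-claims. \textbf{(a)}~No hyperplane crosses both $\gamma$ and $Y$: if $J$ is skewered by $g$ with $gH^{+}\subsetneq H^{+}\ni\xi$ and some $y\in Y$ lay in $H^{-}$, then because $g^{k}H^{-}\supsetneq H^{-}$ for all $k\geq 0$, we get $y\in g^{k}H^{-}$ and $\xi\in g^{k}H^{+}$, so each $g^{k}J$ separates $y$ from $\xi$; but these are infinitely many distinct hyperplanes, while $d_{Y}(y,\xi)<\infty$ bounds the number of hyperplanes separating $y,\xi\in Y$. \textbf{(b)}~For any $x\in\mathrm{Min}(g^{n})$ and any axis $\alpha$ of $g^{n}$ through $x$, the forward Roller endpoint $\xi_{x}$ of $\alpha$ belongs to $Y$. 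Indeed, the $g^{n}$-skewered hyperplanes coincide with those crossing $\gamma$ (and with those crossing $\alpha$), and on these $\xi,\xi_{x}$ both sit on the attracting side; on non-skewered hyperplanes $\xi$ and $\xi_{x}$ can differ only when the hyperplane separates $\gamma$ from $\alpha$, and by Corollary~\ref{cor:HypSepTwoConvex} there are at most $d(v,x)<\infty$ such hyperplanes for any $v\in\gamma$. Combining the sub-claims: if $J$ is of Type~B then it does not cross $\alpha$, so $\alpha$ sits in one halfspace of $J$; that halfspace contains $\xi_{x}\in Y$, hence is the one containing $Y$, which by (a) also contains $\gamma$. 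So $x\in\alpha$ lies on this side, meaning $J$ does not separate any two points of $\mathrm{SMin}(g)$.

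Injectivity of $F|_{\mathrm{SMin}(g)}$ is then immediate from the classification, since agreement of $F$-values forces agreement on every Type~A and Type~C hyperplane. For surjectivity, given $(p,m)\in Y\times I(\zeta,\xi)$, I would define an orientation $\sigma$ of $X$ by selecting $p$'s halfspace on every Type~C hyperplane, $m$'s halfspace on every Type~A hyperplane, and the halfspace containing $\gamma\cup Y$ on every Type~B hyperplane; sub-claim (a) makes these rules compatible on the partition. The main obstacle I anticipate is verifying that $\sigma$ is a principal orientation (a vertex of $X$, not merely a Roller-boundary point) lying in some $\mathrm{Min}(g^{n})$. Consistency under halfspace nesting reduces, using sub-claim (a) and the observation that $m\in I(\zeta,\xi)$ lies on the $\gamma$-side of every non-$\gamma$-crossing hyperplane (since $I(\zeta,\xi)$ is the convex hull of $\gamma$ by Lemma~\ref{lem:ConvexHullInter}), to checking a handful of cases each of which forces the expected halfspace. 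Principality follows by bounding the distance from $\sigma$ to a fixed vertex of $\gamma$ via $d_{Y}(p,\xi)<\infty$, $d(m,\gamma)<\infty$, and uniform local finiteness; and $\sigma\in\mathrm{Min}(g^{n})$ drops out from the hyperplane count for $d(\sigma,g^{n}\sigma)=\|g^{n}\|$.
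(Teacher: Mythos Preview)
Your approach is genuinely different from the paper's. The paper works level by level: it imports from \cite{FTT} the decomposition $\mathrm{Min}(g^{n})\cong T_{n}\times Q_{n}$ (their Lemmas~4.10 and~4.13), identifies $T_{n}$ with $\mathrm{Fix}_{Y}(g^{n})$ via Claim~\ref{claim:gInfty}, and then passes to the union, using uniform local finiteness to get $\bigcup_{n}\mathrm{Fix}_{Y}(g^{n})=Y$ and a direct construction to get $\bigcup_{n}Q_{n}=I(\zeta,\xi)$. Your wallspace/Type~A--B--C classification is more self-contained and avoids the \cite{FTT} black box; it also makes the product structure visible at the hyperplane level. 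However, two steps do not go through as written.

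\textbf{The median--subalgebra step.} Your claim that ``any $g^{n}$-skewered hyperplane separates each of the three pairs, while any non-skewered one separates none'' is false. In $\mathbb{Z}^{2}$ with $g=(1,1)$-translation the hyperplane $\{a=\tfrac12\}$ separates $(0,0)$ from $g(0,0)=(1,1)$ but not $(5,0)$ from $g(5,0)=(6,1)$; the set of hyperplanes separating $x$ from $g^{n}x$ genuinely depends on $x\in\mathrm{Min}(g^{n})$. Thus your count does not bound $d(\mu,g^{n}\mu)$ by~$\|g^{n}\|$. The conclusion $\mu(x,y,z)\in\mathrm{Min}(g^{n})$ is correct, but it needs a different argument (this is exactly what the paper outsources to \cite{FTT}).

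\textbf{The surjectivity step.} When you verify that your candidate $\sigma$ is an orientation, the mixed case ``$A\subset B$ with $J_{A}$ of Type~A and $J_{B}$ of Type~C'' (and its mirror) is not covered by the observations you list. Chasing the definitions only yields $\gamma\subset B$ and $Y\subset A^{c}$, which says nothing about whether $p\in B$ given $m\in A$. The case is in fact vacuous because every Type~A hyperplane is transverse to every Type~C hyperplane, but this transversality needs its own proof; it does not follow from sub-claim~(a) alone. Relatedly, concluding $\sigma\in\mathrm{Min}(g^{n})$ requires choosing $n$ so that \emph{both} $g^{n}p=p$ (this is where uniform local finiteness enters, as in Claim~\ref{claim:StabFix}) \emph{and} $d(m,g^{n}m)=\|g^{n}\|$; the latter is the statement $m\in Q_{n}$, which is precisely the paper's claim $\bigcup_{n}Q_{n}=I(\zeta,\xi)$ and is not an immediate ``hyperplane count''. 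Your outline does not supply either ingredient.
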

\begin{figure}
\begin{center}
\includegraphics[trim={0 12cm 35cm 0},clip,scale=0.4]{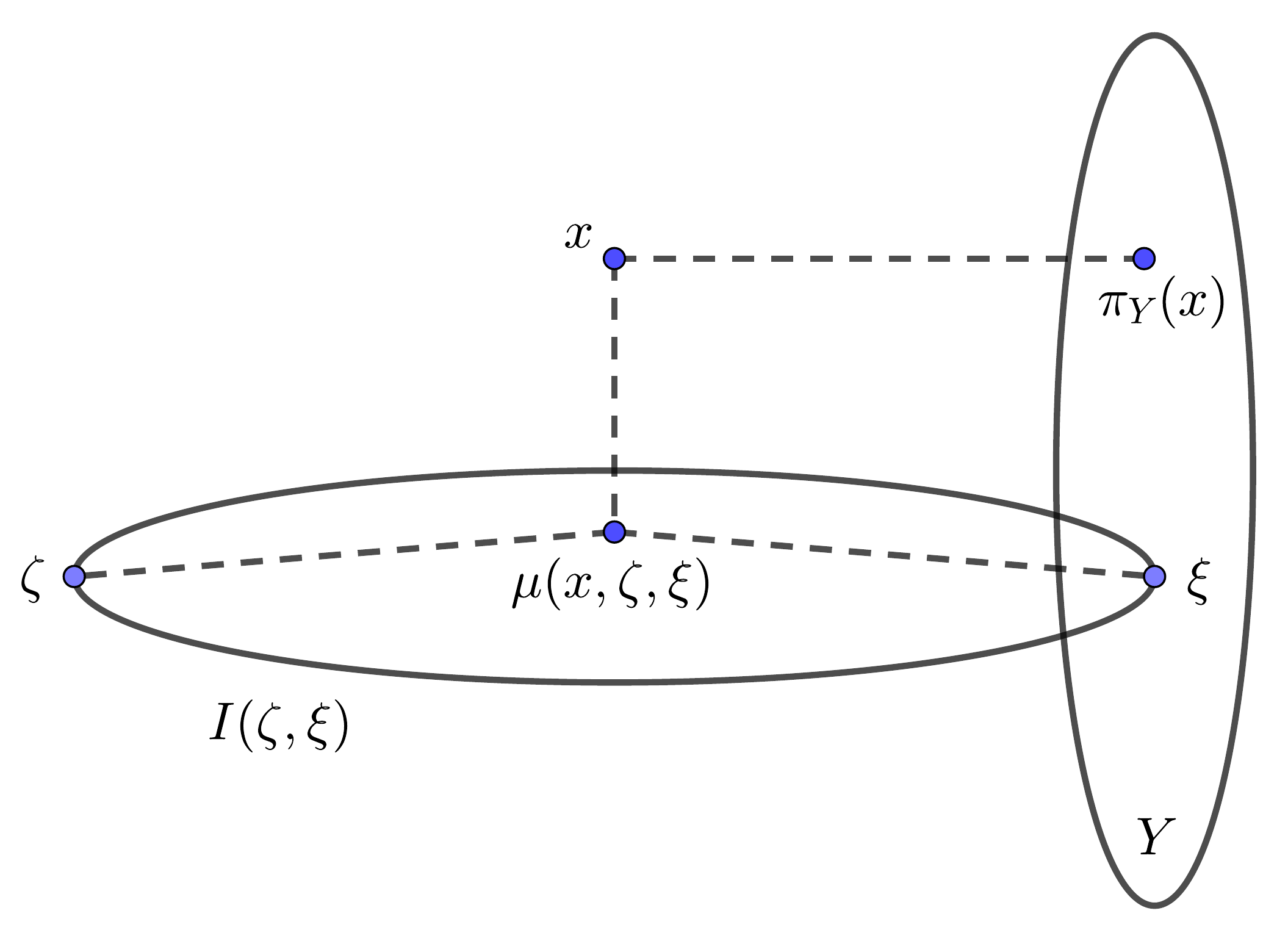}
\caption{Isomorphism $\mathrm{SMin}(g) \to Y \times I(\zeta,\xi)$ from Theorem \ref{thm:Iso}.}
\label{Iso}
\end{center}
\end{figure}

\noindent
The rest of the section is dedicated to the proof of this statement. So let $X$ be a finite-dimensional CAT(0) cube complex and $g \in \mathrm{Isom}(X)$ a loxodromic isometry. Fix an axis $\gamma$ of $g$ and let $\zeta,\xi \in \mathfrak{R}X$ denote its endpoints at infinity. Also, let $Y \subset \mathfrak{R}X$ denote the cubical component which contains $\xi$. Now define the map:
$$\varphi : \left\{ \begin{array}{ccc} X & \to & Y \times I(\zeta,\xi) \\ x & \mapsto & \left( \pi_Y(x), \mu(x,\zeta,\xi) \right) \end{array} \right.,$$
where $\pi_Y : X \to Y$ denotes the projection onto $Y$ as defined by Proposition \ref{prop:ProjInf}. 

\begin{lemma}\label{lem:Iso}
Fix some $n \geq 1$, and let $Q_n$ denote the union of all the axes of $g^n$ having $\zeta,\xi$ as points at infinity. Then $\mathrm{Min}(g^n)$, $\mathrm{Fix}_Y(g^n)$ and $Q_n$ are three median subalgebras of $\overline{X}$ and $\varphi$ induces an isomorphism of median algebras $\mathrm{Min}(g^n) \to \mathrm{Fix}_Y(g^n) \times Q_n$.
\end{lemma}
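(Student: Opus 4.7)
The plan is to isolate a \emph{hyperplane dichotomy} as the technical heart and then derive the three median-subalgebra statements and the isomorphism as consequences. The dichotomy is: any hyperplane $H$ separating some $v \in \mathrm{Min}(g^n)$ from $g^n v$ must be skewered by $g^n$, meaning some halfspace $H^+$ satisfies $g^n H^+ \subsetneq H^+$. From this, (a) $\xi \in H^+$ and $\zeta \in H^-$, so $H$ crosses $\gamma$; and (b) $(g^{kn}H^+)_{k \geq 0}$ is a decreasing sequence of halfspaces containing $\xi$, each of which contains $Y$ by Lemma \ref{lem:DecreasingInf}, so $Y \subset H^+$ and $H$ does not cross $Y$.

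Given this, the three sets are median subalgebras. The set $\mathrm{Min}(g^n)$ is a convex subcomplex by Haglund's axis theorem \cite{HaglundAxis}; the isometry $g^n$ preserves the cubical component $Y$ (since it fixes $\xi$), and fixed sets of cellular isometries without inversions are convex, so $\mathrm{Fix}_Y(g^n)$ is a convex subcomplex of $Y$; finally $Q_n = \mathrm{Min}(g^n) \cap I(\zeta,\xi)$, the forward inclusion following from Lemma \ref{lem:ConvexHullInter} and the reverse from the observation that a vertex in the intersection lies on a bi-infinite axis of $g^n$ contained in $I(\zeta,\xi)$, whose endpoints are forced to be $\zeta,\xi$ by a direct orientation computation. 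A hyperplane-by-hyperplane unpacking of the median in $\overline{X}$ identifies $v \mapsto \mu(v,\zeta,\xi)$ with the combinatorial projection onto $I(\zeta,\xi)$ via Lemma \ref{lem:HypProjSeparate}, so $\varphi = (\pi_Y, \pi_{I(\zeta,\xi)})$ is a median homomorphism. Combining the dichotomy with Lemma \ref{lem:HypProjSeparate} places $\varphi(\mathrm{Min}(g^n))$ inside $\mathrm{Fix}_Y(g^n) \times Q_n$: no hyperplane separating $v$ from $g^n v$ crosses $Y$, so $\pi_Y(v) = g^n \pi_Y(v)$; and all such hyperplanes cross $\gamma$, so $d(\pi_{I(\zeta,\xi)}(v), g^n \pi_{I(\zeta,\xi)}(v)) = \ell(g^n)$.

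The main obstacle is bijectivity of $\varphi|_{\mathrm{Min}(g^n)}$, which requires upgrading the dichotomy from ``hyperplanes separating $v$ and $g^n v$'' to ``all hyperplanes crossing $\mathrm{Min}(g^n)$''. If some hyperplane $H$ crossed $\mathrm{Min}(g^n)$ yet crossed neither $\gamma$ nor $Y$, pick $v \in \mathrm{Min}(g^n) \cap H^-$ on an axis $\alpha$ of $g^n$; the dichotomy applied iteratively gives $g^{kn}v \in H^-$ for all $k \geq 0$, so the endpoint $\xi_\alpha = \lim_{k\to\infty} g^{kn}v$ lies in $H^-$. But $d(g^{kn}v, g^{kn}\gamma(0)) = d(v,\gamma(0))$ is constant in $k$, so at most $d(v,\gamma(0))$ hyperplanes separate $\xi_\alpha$ from $\xi$, placing $\xi_\alpha$ in the cubical component $Y$ of $\xi$; then $H$ separates $\xi_\alpha, \xi \in Y$, contradicting that $H$ does not cross $Y$. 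Injectivity follows, and surjectivity is obtained by reconstructing, from $(y,q) \in \mathrm{Fix}_Y(g^n) \times Q_n$, the orientation that agrees with $q$ on hyperplanes crossing $\gamma$, with $y$ on those crossing $Y$, and with the side containing $\gamma$ on the remaining hyperplanes of $X$; the upgraded dichotomy ensures coherence and that the resulting vertex lies in $\mathrm{Min}(g^n)$.
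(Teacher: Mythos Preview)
Your route is genuinely different from the paper's. The paper imports the product decomposition $\mathrm{Min}(g^n)\cong T_n\times Q_n$ (with $T_n=\{g^{n\infty}x:x\in\mathrm{Min}(g^n)\}$) directly from \cite[Lemmas~4.10 and~4.13]{FTT}, and then proves two claims: that $g^{n\infty}x=\pi_Y(x)$ for $x\in\mathrm{Min}(g^n)$, and that $T_n=\mathrm{Fix}_Y(g^n)$, the latter via projecting a point of $\gamma$ onto the $g^n$-invariant convex subcomplex $D=\bigcap_{J}J^+$ cut out by the hyperplanes separating a given $y\in\mathrm{Fix}_Y(g^n)$ from $\xi$. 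Your hyperplane dichotomy is essentially the geometric content behind the \cite{FTT} decomposition, so your approach is more self-contained; the trade-off is that you must establish surjectivity yourself rather than inheriting it.

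That surjectivity step is where there is a genuine gap. You assert that the orientation $\sigma$ (agree with $q$ on hyperplanes crossing $\gamma$; with $y$ on hyperplanes crossing $Y$; with the $\gamma$-side otherwise) is coherent because of the ``upgraded dichotomy''. But that dichotomy only classifies hyperplanes crossing $\mathrm{Min}(g^n)$; coherence must be checked in~$X$. The dangerous case is a hyperplane $H$ crossing $\gamma$ and a hyperplane $J$ separating $y$ from $\xi$ that are \emph{nested}, say $J^+\subset H^\xi$ with $y\in J^+$ and $q\in H^\zeta$; then $\sigma$ would choose disjoint halfspaces. This configuration is in fact impossible, but it takes an argument: some power $g^{p}$ fixes $J$ and satisfies $g^{p}H^\xi\subsetneq H^\xi$, whence $J^+\subset g^{kp}H^\xi$ for all $k\in\mathbb{Z}$; picking any vertex $w\in J^+\cap X$ then forces infinitely many distinct hyperplanes $g^{kp}H$ (for $k\to+\infty$) to separate $w$ from $\gamma(0)$, contradicting $d(w,\gamma(0))<\infty$. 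Once this transversality is established your construction goes through (and one checks easily that $\sigma$ differs from $q$ only on the finitely many hyperplanes separating $y$ from $\xi$, so $\sigma\in X$, and that $d(\sigma,g^n\sigma)=\ell(g^n)$). A smaller point: the assertion that $g^n$ itself skewers every $H$ separating $v$ from $g^nv$ is not quite right; only some power does, by finite-dimensionality, but this is enough for both~(a) and~(b) via Lemma~\ref{lem:DecreasingInf}.
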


\begin{proof}
Set 
$$T_n = \left\{ g^{n\infty} \cdot x := \lim\limits_{k \to + \infty} g^{nk} \cdot x \mid x \in \mathrm{Min}(g^n) \right\}.$$ 
According to \cite[Lemmas 4.10 and 4.13]{FTT}, $\mathrm{Min}(g^n)$, $T_n$ and $Q_n$ are median subalgebras of $\overline{X}$ and
$$\left\{ \begin{array}{ccc} \mathrm{Min}(g^n) & \to & T_n \times Q_n \\ x & \mapsto & \left(g^{n\infty} x, \mu(x,\zeta,\xi) \right) \end{array} \right.$$
is an isomorphism of median algebras. First, we notice that this map is induced by $\varphi$.

\begin{claim}\label{claim:gInfty}
For every $x \in \mathrm{Min}(g^n)$, we have $g^{n \infty} \cdot x = \pi_Y(x)$.
\end{claim}

\noindent
If there exists some $x \in \mathrm{Min}(g^n)$ such that $g^{n \infty} \cdot x \neq \pi_Y(x)$, then there exists some hyperplane $J$ separating $x$ from $g^{n \infty} \cdot x$ which crosses $Y$. Let $\alpha \in Y$ be a point such that $J$ separates $\alpha$ and $g^{n \infty}x$. Because $X$ is finite-dimensional and because $J$ crosses an axis of $g^n$, there must exist some $k \geq 1$ such that $g^{kn}J^+ \subsetneq J^+$, where $J^+$ denotes the halfspace delimited by $J$ which contains $g^{n \infty}x$. But then $\{ g^{nkr}J \mid r \geq 1\}$ defines an infinite family of hyperplanes separating $\alpha$ and $g^{n \infty} x$, which is impossible since $g^{n\infty}x$ and $\alpha$ are two points of the same cubical component $Y$. This concludes the proof of our claim.

\medskip \noindent
The next observation required to conclude the proof of our lemma is:

\begin{claim}
We have $T_n=\mathrm{Fix}_Y(g^n)$.
\end{claim}

\noindent
It is clear that $T_n \subset \mathrm{Fix}_Y(g^n)$. Conversely, fix a point $\alpha \in \mathrm{Fix}_Y(g^n)$. If $\alpha = \xi$, there is nothing to prove, so we suppose that $\alpha \neq \xi$. Let $\mathcal{J}$ denote the (non-empty and finite) collection of the hyperplanes separating $\alpha$ and $\xi$. For every $J \in \mathcal{J}$, let $J^+$ denote the halfspace delimited by $J$ which contains $\alpha$. Notice that, because $\alpha$ and $\xi$ are fixed by $g^n$, the intersection $D := \bigcap\limits_{J \in \mathcal{J}} J^+$ is $\langle g^n \rangle$-invariant. Therefore, if we fix a vertex $x \in \gamma$ and if we set $y:= \mathrm{proj}_D(x)$, then, because the projection onto $D$ is 1-Lipschitz according to Lemma \ref{lem:HypProjSeparate}, we know that
$$d(x,g^nx) \leq d(y,g^ny) = d( \mathrm{proj}_D(x), \mathrm{proj}_D(g^nx)) \leq d(x,g^nx),$$
hence $y \in \mathrm{Min}(g^n)$. Moreover, as any hyperplane separating $y$ and $g^ny$ separates $x$ and $g^nx$ according to Lemma \ref{lem:HypProjSeparate}, it follows that the hyperplanes separating $x$ and $g^nx$ are exactly the hyperplanes separating $y$ and $g^ny$. As a consequence, a hyperplane separating $x$ and $y$ has to separate $g^nx$ and $g^ny$. We can iterate the argument and show that a hyperplane separating $g^nx$ and $g^ny$ has to separate $g^{2n}x$ and $g^{2n}y$. And so on. The conclusion is that a hyperplane $J$ separating $x$ and $y$ has to separate $g^{n\infty}x$ and $g^{n \infty} y$. Because such a hyperplane necessarily crosses $Y$, it follows from Claim \ref{claim:gInfty} that it cannot separate $x$ and $\xi$ nor $y$ and $g^{n \infty} y$. On the other hand, we know from Lemma \ref{lem:HypProj} that $J$ separates $x$ from $D$, so that $J$ cannot separate $\alpha$ and $g^{n \infty}y$. We conclude that $J$ separates $\{y,\alpha,g^{n \infty}y\}$ and $\{x,\xi\}$. Thus, we have proved that a hyperplane separating $x$ and $y$ does not separate $\alpha$ and $y$. As a consequence, a hyperplane separating $y$ from $\alpha$ has to separate $x$ from $\xi= \pi_Y(x)$, which implies that it separates $y$ from $Y$. So $\alpha= \pi_Y(y)=g^{n \infty} \cdot y \in T_n$ according to Claim \ref{claim:gInfty}. 
\end{proof}

\noindent
As the inclusions $\mathrm{Min}(g^n) \subset \mathrm{Min}(g^m)$, $\mathrm{Fix}_Y(g^n) \subset \mathrm{Fix}_Y(g^m)$ and $Q_n \subset Q_m$ hold for every integers $n,m \geq 1$ such that $n$ divides $m$, we deduce that $\mathrm{SMin}(g)$, $\bigcup\limits_{n \geq 1} \mathrm{Fix}_Y(g^n)$ and $\bigcup\limits_{n \geq 1} Q_n$ are three median subalgebras of $\overline{X}$ and that $\varphi$ induces an isomorphism of median algebras
$$\mathrm{SMin}(g) \to \left( \bigcup\limits_{n \geq 1} \mathrm{Fix}_Y(g^n) \right) \times \left( \bigcup\limits_{n \geq 1} Q_n \right).$$
Theorem \ref{thm:Iso} now follows from the following two equalities.

\begin{claim}\label{claim:StabFix}
If $X$ is uniformly locally finite, then we have $\bigcup\limits_{n \geq 1} \mathrm{Fix}_Y(g^n) = Y$.
\end{claim}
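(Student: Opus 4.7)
The plan is to prove the nontrivial inclusion $Y \subseteq \bigcup_{n \geq 1} \mathrm{Fix}_Y(g^n)$, the converse being immediate, by a short pigeonhole argument powered by Lemma \ref{lem:LocallyFiniteInf}.

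First I would verify that $g$ stabilises $Y$. Since $\xi$ is an endpoint at infinity of an axis of $g$, the isometry $g$ extends to $\overline{X}$ by fixing $\xi$, and it permutes cubical components of $\overline{X}$; the component through $\xi$, which is $Y$, is therefore $g$-invariant. In particular, for every $\alpha \in Y$ the whole orbit $\{g^n \alpha : n \geq 0\}$ lies in $Y$.

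Next, for a fixed $\alpha \in Y$, let $d := d(\alpha,\xi)$, which is the number of hyperplanes of $X$ separating $\alpha$ and $\xi$; this quantity is finite precisely because $\alpha$ and $\xi$ belong to the same cubical component $Y$. Since $g$ is an isometry of $\overline{X}$ fixing $\xi$, we obtain $d(g^n\alpha,\xi) = d(\alpha,\xi) = d$ for every $n \geq 0$, so the whole $g$-orbit of $\alpha$ is contained in the combinatorial ball $B_Y(\xi,d)$ of $Y$.

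Finally, because $X$ is uniformly locally finite, Lemma \ref{lem:LocallyFiniteInf} (applied inside $\overline{X}$, as in its proof) ensures that $Y$ is uniformly locally finite as well, and an easy induction on the radius then shows that $B_Y(\xi,d)$ is finite. By pigeonhole, there exist integers $n > m \geq 0$ with $g^n\alpha = g^m\alpha$, whence $g^{n-m}\alpha = \alpha$ with $n-m \geq 1$. This proves $\alpha \in \mathrm{Fix}_Y(g^{n-m})$, establishing the claim. There is no real obstacle here; the only point requiring mild care is to invoke Lemma \ref{lem:LocallyFiniteInf} on cubical components sitting inside the Roller compactification rather than inside $X$ itself, which is exactly the setting in which that lemma was proved.
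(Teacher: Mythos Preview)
Your proof is correct and is essentially the same as the paper's: both invoke Lemma~\ref{lem:LocallyFiniteInf} to conclude that $Y$ is locally finite, then use that $g$ fixes $\xi$ so the $g$-orbit of any $\alpha\in Y$ stays in a finite ball around $\xi$, forcing a periodic point by pigeonhole. The paper compresses this into two sentences while you have spelled out each step explicitly, but the argument is identical.
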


\begin{proof}
According to Lemma \ref{lem:LocallyFiniteInf}, $Y$ is locally finite. Consequently, if $\alpha \in Y$, the fact that $g$ fixes $\xi \in Y$ implies that $g^N$ fixes $\alpha$ for some sufficiently large $N \geq 1$. Therefore, any point of $Y$ is fixed by a non-trivial power of $g$.
\end{proof}

\begin{claim}
We have $\bigcup\limits_{n \geq 1} Q_n = I(\zeta,\xi)$.
\end{claim}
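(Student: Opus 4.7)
The plan is to prove the two inclusions of the claim separately, with the second one following slickly from Lemma \ref{lem:Iso} once the main geometric work is done.

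For the inclusion $\bigcup_{n \geq 1} Q_n \subseteq I(\zeta,\xi)$, I would fix a point $y$ on an axis $\ell$ of some $g^n$ with endpoints $\zeta$ and $\xi$ at infinity. Assume for contradiction that some hyperplane $J$ separates $y$ from both $\zeta$ and $\xi$; then the two sub-rays of $\ell$ emanating from $y$ must each eventually enter the halfspace delimited by $J$ that contains $\{\zeta,\xi\}$, forcing $\ell$ to cross $J$ at least twice. This contradicts the fact that a geodesic crosses each hyperplane at most once, so no such $J$ exists and $\mu(y,\zeta,\xi) = y$, i.e.\ $y \in I(\zeta,\xi)$.

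For the converse inclusion, the key observation is that Lemma \ref{lem:Iso} already tells us that, whenever $x \in \mathrm{Min}(g^n)$, the second coordinate $\mu(x,\zeta,\xi)$ of $\varphi(x)$ lies in $Q_n$. For $x \in I(\zeta,\xi)$ one has $\mu(x,\zeta,\xi) = x$ by definition, so the task reduces to exhibiting, for every such $x$, some $n \geq 1$ with $x \in \mathrm{Min}(g^n)$. To produce such an $n$, I would invoke Lemma \ref{lem:ConvexHullInter} to find a geodesic $\sigma$ through $x$ with endpoints $\gamma(a), \gamma(b) \in \gamma$, where $\gamma$ is parametrised so that $g \cdot \gamma(s) = \gamma(s+\tau)$ and $\tau \geq 1$ denotes the translation length of $g$. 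For any $n$ large enough that $n\tau \geq b-a$, the concatenation of the sub-path of $\sigma$ from $x$ to $\gamma(b)$, the sub-path of $\gamma$ from $\gamma(b)$ to $\gamma(a+n\tau)$, and the sub-path of $g^n\sigma$ from $\gamma(a+n\tau) = g^n\gamma(a)$ to $g^n x$ yields a path from $x$ to $g^n x$ of length
$$d(x,\gamma(b)) + (a+n\tau-b) + d(\gamma(a),x) = (b-a) + (a+n\tau-b) = n\tau,$$
using that $x$ lies on the geodesic $\sigma$. Since $n\tau$ is also the translation length of $g^n$, this forces $d(x,g^n x) = n\tau$, that is, $x \in \mathrm{Min}(g^n)$, and we are done.

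The main obstacle I initially anticipated was identifying the endpoints at infinity of an axis of $g^n$ through $x$ with $\zeta$ and $\xi$ directly, since in the Roller boundary fellow-travelling axes need not share endpoints (as opposed to the CAT(0) boundary). Routing the argument through the already-established Lemma \ref{lem:Iso} sidesteps that issue entirely, and reduces the entire proof to the path construction above.
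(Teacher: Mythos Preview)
Your proof is correct, and the core construction for the reverse inclusion is essentially identical to the paper's: both invoke Lemma~\ref{lem:ConvexHullInter} to place $x$ on a geodesic segment between two points of $\gamma$, then choose a large enough power $n$ and build a path of length $n\tau$ from $x$ to $g^n x$ by splicing together pieces of $\sigma$, $\gamma$, and $g^n\sigma$.

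The one genuine difference is the finishing step. The paper takes the $\langle g^N\rangle$-translates of that finite path, concatenates them into a bi-infinite axis, and asserts directly that this axis has endpoints $\zeta,\xi$ (which is true because it crosses exactly the same hyperplanes as $\gamma$, though the paper leaves this implicit). You instead stop at $x\in\mathrm{Min}(g^n)$ and invoke Lemma~\ref{lem:Iso} to read off $\mu(x,\zeta,\xi)=x\in Q_n$. This is a clean way to sidestep the endpoint identification you flagged as a concern, and it uses only what has already been established; conversely, the paper's version is slightly more self-contained in that it does not lean on the product decomposition of Lemma~\ref{lem:Iso}. Neither approach has a real advantage over the other.

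One cosmetic point: you silently assume $a<b$ when writing $d(\gamma(a),x)+d(x,\gamma(b))=b-a$ and when requiring $n\tau\geq b-a$; it is worth stating this explicitly.
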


\begin{proof}
The inclusion $\bigcup\limits_{n \geq 1} Q_n \subset I(\zeta,\xi)$ is clear. Conversely, let $z \in I(\zeta,\xi)$ be a vertex. According to Lemma \ref{lem:ConvexHullInter}, there exist two vertices $x,y \in \gamma$ such that $z$ belongs to a geodesic $[x,y]$ between $x$ and $y$. Fix a sufficiently large integer $N \geq 1$ so that $y$ separates $x$ and $g^Nx$ along $\gamma$. Then, for any choice of a geodesic $[y,g^Nx]$ between $y$ and $g^Nx$, the concatenation
$$\bigcup\limits_{k \in \mathbb{Z}} g^{kN} \cdot \left( [x,y] \cup [y,g^Nx] \right)$$
defines an axis of $g^N$ passing through $z$. So $z \in Q_N$. The reverse inclusion is proved. 
\end{proof}

\begin{remark}
It can be shown that our stable minimising set $\mathrm{SMin}(g)$ is not only median but also convex, and that it coincides with the parallel set $Y_g$ introduced in \cite[Section 3]{CubeUniformGrowth}. We do not include a proof of this observation as it will not be used in the sequel. As it was pointed out to us by Elia Fioravanti, \cite{EffRAAG} also contains relevant information about minimising sets. In particular, alternative proofs of some of our results can be derived from \cite{EffRAAG}. 
\end{remark}

\section{Geometric characterisation of contracting isometries}\label{section:Contracting}

\noindent
In this section, we make explicit the connection between stable minimising sets and the property of being contracting. More precisely, we want to prove:

\begin{thm}\label{thm:SMingQL}
Let $X$ be a uniformly locally finite CAT(0) cube complex and $g \in \mathrm{Isom}(X)$ a loxodromic isometry. Then $g$ is a contracting isometry if and only if $\mathrm{SMin}(g)$ is quasi-isometric to a line. 
\end{thm}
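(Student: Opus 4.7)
The plan is to leverage the product decomposition $\mathrm{SMin}(g) \cong Y \times I(\zeta,\xi)$ provided by Theorem \ref{thm:Iso}. Since $\gamma \subset I(\zeta,\xi)$, the factor $I(\zeta,\xi)$ is always unbounded, so $\mathrm{SMin}(g)$ is quasi-isometric to a line if and only if $Y$ is bounded (equivalently finite, by local finiteness from Lemma \ref{lem:LocallyFiniteInf}) and $I(\zeta,\xi)$ is quasi-isometric to $\gamma$. It thus suffices to show that this combined condition is equivalent to $g$ being contracting.

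For the forward direction, assume $g$ is contracting. By characterisation (v) of Proposition \ref{prop:contracting}, $g$ is a combinatorial Morse isometry, so there exists $K \geq 0$ with every combinatorial geodesic between two points of $\gamma$ staying in $N_K(\gamma)$. Since Lemma \ref{lem:ConvexHullInter} identifies $I(\zeta,\xi)$ with the union of such geodesics, $I(\zeta,\xi) \subset N_K(\gamma)$ and so $I(\zeta,\xi)$ is quasi-isometric to $\gamma$. For the finiteness of $Y$, I would invoke characterisation (vi): $g$ skewers an $L$-separated pair of hyperplanes which, after replacing $g$ by a power, may be assumed to be a pair $J_n, J_m$ crossing $\gamma$. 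Under the isomorphism $\mathrm{SMin}(g) \cong Y \times I(\zeta,\xi)$, the hyperplanes of $X$ crossing $\mathrm{SMin}(g)$ split into \emph{vertical} ones (crossing $\gamma$) and \emph{horizontal} ones (crossing $Y$ but not $\gamma$), and the product structure forces every horizontal hyperplane to be transverse inside $\mathrm{SMin}(g)$, hence in $X$, to every vertical one. In particular, every horizontal hyperplane of $X$ is transverse to both $J_n$ and $J_m$, so by $L$-separation there are at most $L$ horizontal hyperplanes and hence at most $L$ walls of $Y$. This bounds the diameter of $Y$, and local finiteness forces $Y$ to be finite.

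For the converse, assume $\mathrm{SMin}(g) \cong Y \times I(\zeta,\xi)$ is quasi-isometric to a line, so $Y$ is finite and $I(\zeta,\xi) \subset N_D(\gamma)$ for some $D$. I would again verify characterisation (vi) of Proposition \ref{prop:contracting}. Since $I(\zeta,\xi)$ is a convex (Lemma \ref{lem:ConvexHullInter}), $g$-invariant, uniformly locally finite subcomplex of $X$ that is quasi-isometric to $\gamma$, the restriction $g|_{I(\zeta,\xi)}$ is a contracting isometry and therefore skewers an $L_0$-separated pair of hyperplanes inside $I(\zeta,\xi)$; these lift to a pair of hyperplanes $J_n, J_m$ of $X$ crossing $\gamma$. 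I would then bound the hyperplanes $H$ of $X$ transverse to both $J_n$ and $J_m$ by splitting into three cases: (a) $H$ crosses $I(\zeta,\xi)$, contributing at most $L_0$ by construction; (b) $H$ crosses $\mathrm{SMin}(g)$ but not $I(\zeta,\xi)$, in which case $H$ is horizontal in the product, whose total count is bounded by the walls of the finite $Y$; and (c) $H$ does not cross $\mathrm{SMin}(g)$ at all.

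The main obstacle is case (c), namely hyperplanes transverse to $J_n, J_m$ but avoiding $\mathrm{SMin}(g)$ entirely. To handle it I would project the carrier $N(H)$ onto the convex subcomplex $I(\zeta,\xi)$ via the $1$-Lipschitz projection of Lemma \ref{lem:HypProjSeparate}: the transversality of $H$ with $J_n$ and $J_m$ forces its projected carrier to meet both $J_n \cap I(\zeta,\xi)$ and $J_m \cap I(\zeta,\xi)$, and together with uniform local finiteness and the $L_0$-separation already established inside $I(\zeta,\xi)$, this should pin such $H$ down to a bounded set. Summing the three bounds produces an $L$-separated pair $J_n, J_m$ skewered by $g$ in $X$, and Proposition \ref{prop:contracting}(vi) concludes that $g$ is contracting.
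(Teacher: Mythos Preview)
Your forward direction is sound. Deducing $I(\zeta,\xi)\subset N_K(\gamma)$ from the Morse property and Lemma~\ref{lem:ConvexHullInter} is correct, and your $L$-separation argument bounding the walls of $Y$ is valid---indeed it is essentially the proof of Lemma~\ref{lem:BoundedInf} which appears later in the paper. The paper itself argues this direction differently, via Proposition~\ref{prop:ContractingFlat}: it shows (in the Claim inside the proof) that if $I(\zeta,\xi)$ is a quasi-line but $Y$ is unbounded then one can assemble a combinatorial half-plane based in $I(\zeta,\xi)$, contradicting Proposition~\ref{prop:ContractingFlat}.

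The converse direction has a genuine gap in case~(c). Projecting $N(H)$ onto $I(\zeta,\xi)$ and observing that the image meets both sides of $J_n$ and of $J_m$ only tells you that $\pi(N(H))$ covers a fixed bounded region of $I(\zeta,\xi)$; it says nothing about where $N(H)$ itself sits in $X$. The carriers $N(H)$ may lie at arbitrarily large distance from $I(\zeta,\xi)$, and uniform local finiteness bounds the number of hyperplanes whose carriers meet a fixed bounded subset of $X$, not the number whose \emph{projections} do. So you have not produced any finite $L$ for which $J_n,J_m$ are $L$-separated in $X$, and characterisation~(vi) remains unverified. (There is also a minor issue in case~(b): since $\mathrm{SMin}(g)$ is only known to be a median subalgebra rather than a convex subcomplex, distinct hyperplanes of $X$ could in principle induce the same horizontal wall, so bounding the walls of the finite $Y$ does not immediately bound the relevant hyperplanes of $X$.)

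The paper bypasses this difficulty by arguing contrapositively through Proposition~\ref{prop:ContractingFlat}. If $g$ is not contracting while $\gamma$ is quasiconvex, that proposition supplies an isometric embedding $\mathbb{R}\times[0,+\infty)\hookrightarrow X$ with base in $I(\zeta,\xi)$. From the horizontal rays of this half-plane one reads off a sequence of orientations $\alpha_n\in\mathfrak{R}X$ all lying in $Y$, and one checks they sit at unbounded distance from $\xi$, so $Y$ is unbounded. This yields the desired implication without ever attempting to lift $L$-separation from $I(\zeta,\xi)$ to the ambient complex $X$.
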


\noindent
We already mentioned geometric characterisations of contracting isometries of CAT(0) cube complexes in Section \ref{section:Isom}. The proof of our theorem is based on the next one:

\begin{prop}\label{prop:ContractingFlat}
Let $X$ be a locally finite CAT(0) cube complex and $g \in \mathrm{Isom}(X)$ a loxodromic isometry. Fix an axis $\gamma$ of $g$. Then $g$ is a contracting isometry if and only if $\gamma$ is quasiconvex and if there does not exist an isometric embedding $\mathbb{R} \times [0,+ \infty) \hookrightarrow X$ such that $\mathbb{R} \times \{0\}$ is sent into the convex hull of $\gamma$.
\end{prop}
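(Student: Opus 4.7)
The plan for the forward direction is to exploit the Morse characterisation of contracting isometries provided by Proposition \ref{prop:contracting}. If $g$ is contracting, then $g$ is Morse, so $\gamma$ is a Morse quasi-geodesic; in particular combinatorial geodesics between two vertices of $\gamma$ stay within some $K$-neighborhood of $\gamma$, so $\gamma$ is quasiconvex, and combined with Lemma \ref{lem:ConvexHullInter} this gives $I(\zeta,\xi) \subset N_K(\gamma)$. Suppose for contradiction that some isometric embedding $\phi : \mathbb{R} \times [0,+\infty) \hookrightarrow X$ sends $\mathbb{R} \times \{0\}$ into $I(\zeta,\xi)$. For each $r \geq 0$, pick vertices $p_r', q_r' \in \gamma$ within distance $K$ of $\phi(r,0)$ and $\phi(-r,0)$ respectively, and consider the concatenation
$$p_r' \longrightarrow \phi(r,0) \longrightarrow \phi(r,r) \longrightarrow \phi(-r,r) \longrightarrow \phi(-r,0) \longrightarrow q_r'.$$
After pruning backtracking, this yields a quasi-geodesic between vertices of $\gamma$ with constants depending only on $K$, of length at most $4r + 2K$, whose image contains the vertex $\phi(0,r)$ lying at distance at least $r - K$ from $\gamma$. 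For $r$ sufficiently large this violates the Morse property of $\gamma$, a contradiction.

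For the reverse direction, I would argue via the skewering characterisation in Proposition \ref{prop:contracting}(vi). Assume $\gamma$ is quasiconvex and that no such half-flat exists, and suppose by contradiction that $g$ is not contracting. Then no pair of hyperplanes skewered by $g$ is $L$-separated for any $L$. Pick a hyperplane $J$ dual to an edge of $\gamma$ and let $J' := g^N J$ for $N$ large enough that $g$ skewers $(J,J')$. Then infinitely many hyperplanes are transverse to both $J$ and $J'$; since $X$ is finite-dimensional, Ramsey's theorem produces an infinite subfamily $(H_i)_{i \geq 1}$ that is pairwise non-transverse, hence linearly stacked by nesting of halfspaces. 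Applying Helly's property to the carriers $N(H_i)$ together with the slab between $J$ and $J'$, and combining with a pigeonhole extraction on the $\langle g^N \rangle$-orbit of $(H_i)$ to gain equivariance up to reindexing, the $\langle g^N \rangle$-orbit of the resulting cubical rectangle assembles into an isometric embedding of $\mathbb{R} \times [0,+\infty)$ whose base is a bi-infinite geodesic in $I(\zeta,\xi) = \mathrm{conv}(\gamma)$, contradicting the hypothesis.

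The main obstacle is the reverse direction, and more precisely extracting from the abundance of transverse hyperplanes a truly isometric (rather than merely coarse) half-flat. One must simultaneously (a) secure linearly stacked nesting via the finite-dimensionality of $X$, (b) achieve $\langle g^N \rangle$-equivariance via pigeonholing on orbits, and (c) realise the cubical rectangle exactly inside $X$ using Helly's property on hyperplane carriers. The forward direction, by contrast, is routine once the quasiconvexity of $\gamma$ is extracted from the Morse property.
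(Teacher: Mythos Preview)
Your forward direction differs from the paper's but is essentially fine: you violate the Morse property with an explicit quasi-geodesic through the half-flat, whereas the paper argues via $L$-separation (the vertical hyperplanes of the half-flat are transverse to every horizontal one, so no pair of hyperplanes skewered by $g$ can be $L$-separated). One detail to patch: the inequality $d(\phi(0,r),\gamma)\geq r-K$ is not immediate from $I(\zeta,\xi)\subset N_K(\gamma)$, which only gives one containment; you need the reverse, which follows once you observe that the two bi-infinite geodesics $\gamma$ and $\phi(\mathbb{R}\times\{0\})$ are at bounded Hausdorff distance (a $(1,2K)$-quasi-isometric embedding $\mathbb{R}\to\mathbb{R}$ is coarsely surjective).

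The reverse direction has a genuine gap. Your pigeonhole step aiming for $\langle g^N\rangle$-equivariance does not go through: the proposition concerns a single isometry with no cocompactness hypothesis, so nothing forces the $H_i$ into finitely many $\langle g^N\rangle$-orbits. Even granting some equivariance, the translates $g^{kN}R$ of a rectangle sitting between the fixed pair $J,J'$ need not glue isometrically --- the right boundary ray of $R$ inside $N(J')$ has no reason to coincide with the left boundary ray of $g^N R$. Most tellingly, your argument never invokes local finiteness of $X$, which is the crucial hypothesis. The paper avoids equivariance entirely: for each $n$ it takes the hyperplanes $A_n,B_n$ dual to edges of $\gamma$ at positions $\pm n$ and a single hyperplane $C_n$ transverse to both with $d(N(C_n),N(\gamma))\geq n$, then cites \cite[Corollary~2.17]{coningoff} to produce an isometrically embedded rectangle $R_n:[-a_n,b_n]\times[0,c_n]\hookrightarrow X$ with base in $N(\gamma)$ and $R_n(0,0)$ pinned in a fixed neighbourhood of $\gamma(0)$. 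Since $a_n,b_n,c_n\to\infty$ and all rectangles are anchored near the same vertex, local finiteness allows a subsequential limit yielding the desired half-flat. The idea you are missing is this anchored-limit-of-rectangles construction; equivariance is a red herring here.
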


\noindent
We emphasize that, in this statement, $\mathbb{R} \times [0,+ \infty)$ and $X$ are thought of as CAT(0) cube complexes endowed with their graph metrics. Also, recall that a subspace $Y$ of a CAT(0) cube complex is \emph{quasiconvex} if there exists a constant $R \geq 0$ such that any geodesic between two points of $Y$ stays in the $R$-neighborhood of $Y$. 

\medskip \noindent
Our proposition is essentially contained in \cite{article3}. We include a sketch of proof for reader's convenience.

\begin{proof}[Sketch of proof of Proposition \ref{prop:ContractingFlat}.]
If $g$ is a contracting isometry, then we know from Proposition \ref{prop:contracting} that its axis $\gamma$ has to be quasiconvex. Moreover, if there exists an isometric embedding $\mathbb{R} \times [0,+ \infty) \hookrightarrow X$ such that $\mathbb{R} \times \{0\}$ is sent into the convex hull of $\gamma$, then any two hyperplanes intersecting $\gamma$ are simultaneously transverse to infinitely many hyperplanes. Consequently, $g$ does not skewer a pair of $L$-separated hyperplanes for any $L \geq 0$, contradicting Proposition \ref{prop:contracting}. Conversely, assume that $g$ is not contracting and that $\gamma$ is quasiconvex. As a consequence of Proposition \ref{prop:contracting}, for every $n \geq 0$ the hyperplanes $A_n$ and $B_n$ dual to the edges $[\gamma(-n-1),\gamma(-n)]$ and $[\gamma(n),\gamma(n+1)]$ of $\gamma$ are simultaneously transverse to infinitely many hyperplanes; fix a hyperplane $C_n$ transverse to both $A_n$ and $B_n$ which satisfies $d(N(C_n),N(\gamma)) \geq n$, where $N(\gamma)$ denotes the convex hull of $\gamma$. As a consequence of \cite[Corollary 2.17]{coningoff}, there exists an isometric embedding $R_n : [-a_n,b_n] \times [0,c_n] \hookrightarrow X$ such that $R(0,0)$ stays in a fixed neighborhood of $\gamma(0)$ when $n$ varies, and such that $[-a_n,b_n] \times \{0\}$ is sent into $N(\gamma)$, $\{b_n\} \times [0,c_n]$ into $N(B_n)$, $[-a_n,b_n] \times \{c_n\}$ into $N(C_n)$, and $\{-a_n\} \times [0,c_n]$ into $N(A_n)$. Notice that $a_n,b_n,c_n \to + \infty$. Because $X$ is locally finite, we can extract from $(R_n)$ a subsequence converging to an isometric embedding $\mathbb{R} \times [0,+ \infty) \hookrightarrow X$ such that $\mathbb{R} \times \{0\}$ is sent into the convex hull of $\gamma$.
\end{proof}

\noindent
Now we are ready to prove our theorem.

\begin{proof}[Proof of Theorem \ref{thm:SMingQL}.]
Fix an axis $\gamma$ of $g$, let $\zeta,\xi$ denote the endpoints at infinity of $g$, and let $Y \subset \mathfrak{R}X$ be the cubical component containing $Y$. As a consequence of Theorem~\ref{thm:Iso}, $\mathrm{SMin}(g)$ is a quasi-line if and only if $I(\zeta,\xi)$ is a quasi-line and if $Y$ is bounded. Also, as a consequence of Lemma \ref{lem:ConvexHullInter}, $\gamma$ is quasiconvex if and only if $I(\zeta,\xi)$ is a quasi-line.

\begin{claim}
Assume that $I(\zeta,\xi)$ is a quasi-line. If $Y$ is unbounded, then there exists an isometric embedding $\mathbb{R} \times [0,+ \infty) \hookrightarrow X$ such that $\mathbb{R} \times \{0\}$ is sent into the convex hull of the axis $\gamma$.
\end{claim}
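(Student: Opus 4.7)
The plan is to build arbitrarily wide combinatorial strips alongside $\gamma$ inside $\mathrm{SMin}(g)$ via Theorem \ref{thm:Iso}, and then to extract a limiting half-plane using the local finiteness of $X$. As a first step, observe that the cubical component $Y$ is connected, locally finite by Lemma \ref{lem:LocallyFiniteInf}, and infinite (since unbounded); König's lemma then produces an infinite geodesic ray $\xi = \alpha_0, \alpha_1, \alpha_2, \ldots$ inside $Y$ starting at $\xi$.

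For every $n \geq 0$, set
$$S_n := \{\alpha_0, \ldots, \alpha_n\} \times \gamma \ \subset \ Y \times I(\zeta, \xi).$$
Because three points on a combinatorial geodesic always admit the middle one as their median, $\{\alpha_0, \ldots, \alpha_n\}$ is a median subalgebra of $Y$, so $S_n$ is a median subalgebra of the product, isomorphic as a median algebra to the combinatorial strip $\{0, \ldots, n\} \times \mathbb{Z}$. By Theorem \ref{thm:Iso}, its preimage $\varphi^{-1}(S_n)$ is a median subalgebra of $\mathrm{SMin}(g) \subset X$; by Lemma \ref{lem:WallHyp}, the walls of this subalgebra are the traces of hyperplanes of $X$, so that for any $x, y \in \varphi^{-1}(S_n)$ the $X$-distance $d_X(x, y)$ agrees with the wall-distance in $\varphi^{-1}(S_n)$, which in turn agrees via $\varphi$ with the wall-distance in $S_n$. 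We therefore obtain an isometric embedding $\rho_n \colon \{0, \ldots, n\} \times \mathbb{Z} \hookrightarrow X$ of CAT(0) cube complexes. Moreover, since $\pi_Y(\gamma(k)) = g^{\infty} \gamma(k) = \xi$ by Claim \ref{claim:gInfty} and trivially $\mu(\gamma(k), \zeta, \xi) = \gamma(k)$, the slice $\{0\} \times \mathbb{Z}$ is sent by $\rho_n$ precisely onto $\gamma$.

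To conclude, note that each $\rho_n$ sends $(0, 0)$ to $\gamma(0)$, so by isometry $\rho_n(i, k)$ lies in the ball $B(\gamma(0), i + |k|)$, which is finite by local finiteness of $X$. A standard diagonal extraction yields an increasing sequence $(n_j)$ along which $\rho_{n_j}(i, k)$ stabilises pointwise, producing an isometric embedding $\rho \colon \mathbb{N} \times \mathbb{Z} \hookrightarrow X$ that extends on cubes to the desired isometric embedding $\mathbb{R} \times [0, +\infty) \hookrightarrow X$ (after swapping factors), with $\mathbb{R} \times \{0\}$ landing on $\gamma$ and hence inside the convex hull of $\gamma$. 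The delicate point is the second paragraph: one must translate the purely median-algebraic isomorphism of Theorem \ref{thm:Iso}, restricted to the strip $S_n$, into a genuine isometric embedding of CAT(0) cube complexes; Lemma \ref{lem:WallHyp} is precisely what makes this translation possible.
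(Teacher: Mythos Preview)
There is a genuine gap in the second paragraph, precisely at the point you yourself flag as ``delicate''. Lemma~\ref{lem:WallHyp} says that every wall of the median subalgebra $M=\varphi^{-1}(S_n)$ is the trace of some hyperplane of $X$; this gives a \emph{surjection} from $\{\text{hyperplanes of }X\text{ separating }x,y\}$ onto $\{\text{walls of }M\text{ separating }x,y\}$, and hence only the inequality
\[
d_X(x,y)\ \geq\ d_M(x,y)\ =\ |i-j|+|k-l|.
\]
It does \emph{not} give equality, because distinct hyperplanes of $X$ can have the same trace on a median subalgebra (already for $X=\{0,1,2\}$ a path and $M=\{0,2\}$, both hyperplanes of $X$ induce the unique wall of $M$, so $d_X(0,2)=2$ while $d_M(0,2)=1$). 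Consequently your map $\rho_n$ is only known to be injective and distance-nondecreasing, not isometric, and the limiting half-plane is not produced.

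The paper circumvents this by never asserting that the median subalgebra sits isometrically in $X$. Instead it takes the four corners $(\xi,x_n),(\xi,y_n),(\xi_n,x_n),(\xi_n,y_n)$, observes that the relevant interval containments hold in $Y\times I(\zeta,\xi)$, hence in $\mathrm{SMin}(g)$, hence in $X$ (interval membership for points of a median subalgebra is the same computed in $M$ or in $X$), and then invokes \cite[Lemma~2.110]{Qm}, which manufactures an isometrically embedded rectangle in $X$ directly from these interval conditions. Your argument becomes correct if you first prove that $\mathrm{SMin}(g)$ is \emph{convex} in $X$ (so that the trace map on hyperplanes is a bijection and $d_X=d_{\mathrm{SMin}(g)}$ on it); the paper remarks that this is true but neither proves nor uses it. Absent that, the passage from the median isomorphism of Theorem~\ref{thm:Iso} to an isometric embedding of the strip is not justified.
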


\noindent
Let $(\xi_n)$ be a sequence of points of $Y$ such that $d_Y(\xi, \xi_n) \to + \infty$. Also, fix a vertex $z \in \gamma$, and set $x_n=g^nz$ and $y_n=g^{-n}z$ for every $n \geq 1$. For convenience, we identify the points $(\xi_n,x_n)$, $(\xi_n,y_n)$, $(\xi,x_n)$ and $(\xi,y_n)$ of $Y \times I(\zeta,\xi)$ with the vertices of $\mathrm{SMin}(g)$ given by the isomorphism of Theorem \ref{thm:Iso}. Notice that it follows from Lemma \ref{lem:WallHyp} that the distance between $(\xi_n,x_n)$ and $(\xi,x_n)$ tends to infinity as $n \to + \infty$, because the number of walls in $Y \times I(\zeta,\xi)$ separating these two points tends to infinity as well. Notice also that $(\xi_n,x_n)$ and $(\xi,y_n)$ belong to the interval between $(\xi,x_n)$ and $(\xi_n,y_n)$, and that  $(\xi,x_n)$ and $(\xi_n,y_n)$ belong to the interval between $(\xi_n,x_n)$ and $(\xi,y_n)$. As a consequence of \cite[Lemma 2.110]{Qm}, there exists an isometric embedding $R_n : [0,a_n] \times [0,b_n]\hookrightarrow X$ such that $(0,0)$, $(a_n,0)$, $(0,b_n)$ and $(a_n,b_n)$ are sent respectively to $(\xi,x_n)$, $(\xi,y_n)$, $(\xi_n,x_n)$ and $(\xi_n,y_n)$. Notice that, since $\gamma$ is quasiconvex, there exists some constant $R \geq 0$ (which does not depend on $n$) such that (the image of) $[0,a_n] \times \{0\}$ intersects the ball $B(z,R)$. Because $X$ is locally finite, up to extracting a subsequence, we may suppose without loss of generality that $B(z,r) \cap \mathrm{Im}(D_n)$ is eventually constant for every $r \geq 1$. Therefore, $(R_n)$ converges to an isometric embedding $R : \mathbb{R} \times [0,+ \infty)$ such that $\mathbb{R} \times \{0\}$ is sent into the convex hull of $\gamma$. This concludes the proof of our claim.

\medskip \noindent
Now we are ready to prove our theorem. First, assume that $g$ is a contracting isometry. We know from \cite[Lemma 3.3]{MR3175245} that $\gamma$ has to be quasiconvex, so $I(\zeta,\xi)$ must be a quasi-line. And it follows from the previous claim and from Proposition \ref{prop:ContractingFlat} that $Y$ must be bounded.

\medskip \noindent
Conversely, assume that $g$ is not contracting. If $I(\zeta,\xi)$ is not a quasi-line, there is nothing to prove, so assume also that $I(\zeta,\xi)$ is a quasi-line. As a consequence to Proposition~\ref{prop:ContractingFlat}, there exists an isometric embedding $\mathbb{R} \times [0,+ \infty) \hookrightarrow X$ such that $\mathbb{R} \times \{0\}$ is sent into the convex hull of $\gamma$. For every $n \geq 0$, let $\rho_n$ denote the geodesic ray of $X$ corresponding to the image of $\left( \{0\} \times [0,n] \right) \cup \left( [0,+ \infty) \times \{n\} \right)$. For every $n \geq 0$, let $\alpha_n$ denote the orientation of $X$ containing all the halfspaces of $X$ in those $\rho_n$ eventually lies. Notice that any two rays among the $\alpha_n$'s and the subray of $\gamma$ starting from $z$ and pointing to $\xi$ cross the same hyperplanes up to finitely many exceptions. As a consequence, they all belong to the same cubical component, namely $Y$, which implies that $Y$ must be infinite. As $Y$ is locally finite according to Lemma \ref{lem:LocallyFiniteInf}, we conclude that it must be unbounded. 
\end{proof}

\section{Centralisers of rank-one isometries}\label{section:Main}

\noindent
This section is dedicated to the main result of the article, making explicit the connection between the \emph{stable centraliser} $SC_G(g)= \{ h \in G \mid \exists n \geq 1, [h,g^n]=1\}$ of an element $g$ which belongs to a group $G$ acting geometrically on a CAT(0) cube complex and the property of being contracting. More precisely:

\begin{thm}\label{thm:StableCentraliser}
Let $G$ be a group acting geometrically on a CAT(0) cube complex $X$, and $g \in G$ an infinite-order element. Fix a cubical component $Y \subset \mathfrak{R}X$ which contains an endpoint of an axis of $g$. Then exactly one of the following situations occurs:
\begin{itemize}
	\item $g$ defines a rank-one isometry of $X$;
	\item the stable centraliser $SC_G(g)$ of $g$ is not virtually cyclic;
	\item $\mathrm{Fix}_Y(g^n)$ is finite for every $n \geq 1$ and the sequence $(\mathrm{Fix}_Y(g^n))$ takes infinitely many values. 
\end{itemize}
\end{thm}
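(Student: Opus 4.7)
The plan is to combine the decomposition $\mathrm{SMin}(g) \cong Y \times I(\zeta,\xi)$ from Theorem~\ref{thm:Iso}, the quasi-line characterisation of rank-one isometries from Theorem~\ref{thm:SMingQL}, and the cocompact action $C_G(g^n) \curvearrowright \mathrm{Min}(g^n) \cong \mathrm{Fix}_Y(g^n) \times Q_n$ from Lemma~\ref{lem:centraliserCC}. The geometric action of $G$ on $X$ forces $X$, and hence each cubical component $Y$ (by Lemma~\ref{lem:LocallyFiniteInf}), to be uniformly locally finite, so any infinite median subalgebra of $Y$ is automatically unbounded; moreover, Claim~\ref{claim:StabFix} gives $\bigcup_{n \geq 1} \mathrm{Fix}_Y(g^n) = Y$, which allows one to translate dichotomies on the sequence $(\mathrm{Fix}_Y(g^n))$ into dichotomies on $Y$.

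I would first treat the case in which some $\mathrm{Fix}_Y(g^n)$ is infinite. Then $\mathrm{Fix}_Y(g^n)$ is unbounded, so $\mathrm{Min}(g^n) \cong \mathrm{Fix}_Y(g^n) \times Q_n$ contains an unbounded factor transverse to $\gamma$ and hence is not quasi-isometric to $\mathbb{Z}$. Cocompactness of the $C_G(g^n)$-action on $\mathrm{Min}(g^n)$ then forces $C_G(g^n) \subseteq SC_G(g)$ to be non virtually cyclic, giving the second conclusion; the first fails because $\mathrm{SMin}(g)$ itself is not a quasi-line, and the third fails by hypothesis.

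Otherwise every $\mathrm{Fix}_Y(g^n)$ is finite and $\mathrm{Min}(g^n)$ is quasi-isometric to $Q_n \subseteq I(\zeta,\xi)$. If the sequence $(\mathrm{Fix}_Y(g^n))$ takes infinitely many values, then $Y$ is infinite, so $\mathrm{SMin}(g)$ is not a quasi-line, the first conclusion fails, and the third holds tautologically. If instead the sequence takes only finitely many values, $Y$ is finite and $\mathrm{SMin}(g)$ is quasi-isometric to $I(\zeta,\xi) = \bigcup_n Q_n$: when $I(\zeta,\xi)$ is a quasi-line, we obtain the first conclusion; when it is not, the widths of the $Q_n$'s are unbounded, and I would argue that this forces either some $C_G(g^n)$ to be non virtually cyclic or the indices $[C_G(g^n) : \langle g \rangle]$ to grow without bound, so that $SC_G(g) = \bigcup_n C_G(g^n)$ is not virtually cyclic, giving the second conclusion.

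The main obstacle is the mutual exclusivity of conclusions (2) and (3), namely the implication that (3) forces $SC_G(g)$ to be virtually cyclic. The strategy is to prove that, under (3), each bundle $Q_n$ is a quasi-line of uniformly bounded width, so that each $C_G(g^n)$ is a virtually cyclic extension of $\langle g \rangle$ of uniformly bounded index and the union $SC_G(g)$ is itself virtually cyclic. The geometric content is that when $Y$ is infinite while every $\mathrm{Fix}_Y(g^n)$ stays finite, the cubical component $Y$ captures every transverse direction of the parallel set of $g^n$, so parallel axes of $g^n$ with Roller endpoints $\zeta,\xi$ must coincide with $\gamma$ up to a bounded thickening; extracting uniform bounds from this picture, using cocompactness of the $G$-action, is the delicate point. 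For the simpler exclusivity $(1) \Rightarrow \neg (2)$, I would invoke the classical fact that $SC_G(g)$ preserves the endpoint pair $\{\zeta,\xi\}$ of a rank-one axis, whose stabiliser under a proper action with rank-one dynamics is virtually cyclic.
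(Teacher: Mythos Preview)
Your case analysis and choice of tools (Theorem~\ref{thm:Iso}, Theorem~\ref{thm:SMingQL}, Lemma~\ref{lem:Iso}, Lemma~\ref{lem:centraliserCC}, Claim~\ref{claim:StabFix}) match the paper's proof closely, and your proposed direct argument for $(1)\Rightarrow\neg(2)$ via the stabiliser of $\{\zeta,\xi\}$ is in the spirit of what the paper calls ``a direct proof''. There is, however, one genuine gap and one misplaced worry.

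The gap is in your subcase ``$Y$ finite, $I(\zeta,\xi)$ not a quasi-line''. You argue that the widths of the $Q_n$ grow and try to parlay this into either some $C_G(g^n)$ failing to be virtually cyclic or the indices $[C_G(g^n):\langle g\rangle]$ diverging; neither branch is made precise. The paper's argument here is sharper and hinges on Lemma~\ref{lem:Qstable}, which you never invoke: choosing $n$ so that no hyperplane $J$ is transverse to $g^nJ$ (such $n$ exists, e.g.\ by \cite[Lemma~2.2]{HaettelArtin}) gives $Q_n = I(\zeta,\xi)$ on the nose, so $C_G(g^n)$ acts geometrically on a finite set times $I(\zeta,\xi)$, which is not a quasi-line, and is therefore not virtually cyclic. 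Without Lemma~\ref{lem:Qstable} your argument in this subcase does not close.

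The misplaced worry is what you call the ``main obstacle'', namely $(3)\Rightarrow\neg(2)$. The paper's own proof does not establish this implication: it shows that $(1)$ excludes $(2)$ and $(3)$, and that $\neg(1)$ forces $(2)$ or $(3)$, but it never argues that $(2)$ and $(3)$ cannot hold simultaneously. So on this point you are attempting more than the paper's proof actually delivers. Moreover, your proposed strategy---uniform bounds on the width of each $Q_n$---cannot work as stated, since Lemma~\ref{lem:Qstable} forces $Q_n = I(\zeta,\xi)$ for suitable $n$ independently of the behaviour of the fixed sets in $Y$; any genuine argument that $(3)$ rules out $(2)$ would first have to show that $(3)$ forces $I(\zeta,\xi)$ to be a quasi-line, and neither you nor the paper does this.
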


\noindent
Before turning to the proof of our theorem, we begin by proving the following lemma:

\begin{lemma}\label{lem:Qstable}
Let $X$ be a CAT(0) cube complex and $g \in \mathrm{Isom}(X)$ an isometry. Assume that $g$ is loxodromic, fix one of its axes $\gamma$ and let $\zeta,\xi$ denote its points at infinity. Also, assume that $J$ and $gJ$ cannot be transverse for any hyperplane $J$. Then the union of all the axes of $g$ having $\zeta,\xi$ as endpoints at infinity coincides with $I(\zeta,\xi)$. 
\end{lemma}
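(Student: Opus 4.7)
The inclusion of the union of axes into $I(\zeta,\xi)$ is immediate from Lemma~\ref{lem:ConvexHullInter}, since any axis of $g$ with endpoints $\zeta,\xi$ is a bi-infinite geodesic between them. For the reverse inclusion, my plan is to show that every vertex $x \in I(\zeta,\xi)$ belongs to $\mathrm{Min}(g)$; once this is done, the orbit $\langle g \rangle \cdot x$ lies on a common bi-infinite geodesic, and its endpoints in $\mathfrak{R}X$ must be $\zeta,\xi$ because $g^{\pm k} x$ stays at bounded distance from $g^{\pm k}\pi_\gamma(x)$, which converges to $\xi$, respectively $\zeta$, along $\gamma$.

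Let $\mathcal{H}$ denote the collection of hyperplanes crossing $\gamma$, equivalently those separating $\zeta$ from $\xi$. A short convexity argument shows that every hyperplane outside $\mathcal{H}$ contains $\gamma$, and therefore its convex hull $I(\zeta,\xi)$, entirely on one side; so the hyperplanes separating $x$ and $gx$ all lie in $\mathcal{H}$. The number of $\langle g \rangle$-orbits on $\mathcal{H}$ is exactly $\inf_y d(y,gy)$ (one orbit per hyperplane crossed by $\gamma$ in a fundamental period), so the goal reduces to showing that exactly one hyperplane from each $\langle g \rangle$-orbit separates $x$ from $gx$.

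The place where the hypothesis enters, and the step I expect to be the main obstacle, is the following structural claim: for every $J \in \mathcal{H}$, the orbit $\{g^k J\}_{k \in \mathbb{Z}}$ consists of pairwise disjoint hyperplanes forming a $\zeta$-to-$\xi$ chain, with $g^k J$ contained in the $\zeta$-halfspace of $g^{k+1}J$. Since $\gamma$ crosses $J$ exactly once, loxodromicity of $g$ forces $gJ \neq J$, so the non-transversality assumption promotes them to being disjoint; the orientation of the chain is then pinned down by tracking the points at which the $g^k J$ cross $\gamma$, which shift towards $\xi$ by $\inf_y d(y,gy)$ at each step.

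Granted this chain structure, Lemma~\ref{lem:HypProj} guarantees that any vertex $z \in X$ and its projection $\pi_\gamma(z)$ onto $\gamma$ lie on the same side of every hyperplane in $\mathcal{H}$. Hence there is a well-defined transition index $k_0(z)$ such that $z$ lies in the $\xi$-halfspace of $g^k J$ for $k \leq k_0(z)$ and in the $\zeta$-halfspace for $k > k_0(z)$. A direct computation using equivariance yields $k_0(gx) = k_0(x) + 1$, so $g^{k_0(x)+1} J$ is the unique hyperplane of the orbit separating $x$ from $gx$. Summing over the finitely many orbits gives $d(x,gx) = \inf_y d(y,gy)$, so $x \in \mathrm{Min}(g)$, as desired.
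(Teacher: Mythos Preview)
Your argument is correct in substance and takes a genuinely different route from the paper's. The paper proceeds by induction on the distance from a vertex $y\in I(\zeta,\xi)$ to $\gamma$: for $d(y,\gamma)=1$ it analyses where the edge $g[a,b]$ dual to $gJ$ lands along $\gamma$, rules out one case by producing a forbidden transversality, and in the other case checks by hand that $d(y,gy)=d(x,gx)$; the endpoints are then handled via \cite[Lemma~4.11]{FTT}. Your approach is global: you observe that $\mathcal{H}$ splits into exactly $\tau:=\inf_y d(y,gy)$ many $\langle g\rangle$-orbits, that the non-transversality hypothesis forces each orbit to be a nested chain of halfspaces from $\zeta$ to $\xi$, and that the resulting transition index shifts by one under $g$, so each orbit contributes exactly one separating hyperplane between $x$ and $gx$. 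This is cleaner and avoids the case analysis entirely; it also makes transparent why the translation length is realised at every point of $I(\zeta,\xi)$.

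Two small technical points deserve tightening. First, $\gamma$ is not in general a convex subcomplex (think of a staircase in the square tiling of $\mathbb{R}^2$), so Lemma~\ref{lem:HypProj} does not literally apply to justify the transition index via $\pi_\gamma$; but you do not need it, since for any fixed vertex $z$ only finitely many hyperplanes separate $z$ from $\gamma(0)$, hence $z$ lies in $(g^kJ)^-$ for all large $k$ and in $(g^kJ)^+$ for all large negative $k$. Second, ``bounded distance from a sequence converging to $\xi$'' does not by itself pin down the Roller limit (two parallel horizontal rays in $\mathbb{R}^2$ converge to distinct boundary points). The correct justification is already implicit in what you set up: for $J\in\mathcal{H}$ the transition index shows $g^kx$ is eventually on the $\xi$-side, while for $J\notin\mathcal{H}$ all of $I(\zeta,\xi)$, hence every $g^kx$, lies on the $\xi$-side; thus $g^{\infty}x=\xi$ and symmetrically $g^{-\infty}x=\zeta$.
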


\begin{proof}
Fix a vertex $y \in I(\zeta,\xi)$. We want to prove that there exists an axis of $g$ passing through $y$ and having $\zeta,\xi$ as endpoints. We assume that $d(y,\gamma)=1$, the general case following by induction. So let $x \in \gamma$ be a vertex adjacent to $y$. Because $y$ belongs to $I(\zeta,\xi)$, the hyperplane $J$ separating $x$ and $y$ has to separate $\zeta$ and $\xi$, so that $J$ meets $\gamma$ along an edge $[a,b]$. Up to replacing $g$ with $g^{-1}$, we may suppose without loss of generality that $[a,b]$ is on the left of $x$ (if we endow $\gamma$ with a left-right orientation so that $g$ translates the points of $\gamma$ to the right). Of course, the hyperplane $gJ$ has to intersect the axis $\gamma$ along the edge $g [a,b]$. We distinguish two cases.

\medskip \noindent
First, assume that $g [a,b]$ is included into the subsegment $[x,gx] \subset \gamma$. Notice that $gJ$ does not separate $y$ and $gy$. Indeed, let $D$ denote the halfspace delimited by $gJ$ which contains $gx$. Because $gJ$ separates $gx$ and $gy$, we have $gx \in D$ and $gy \in D^c$. We also know that $gJ$ separates $x$ and $gx$, so that $x \in D^c$. Next, $J$ is the unique hyperplane separating $x$ and $y$, so we deduce from $J \neq gJ$ that $J$ does not separate $x$ and $y$. Consequently, $y$ has to belong to $D^c$ since $x \in D^c$. Thus, we have proved that $y$ and $gy$ both belong do $D^c$, as desired. Notice also that $J$ separates $y$ and $gy$ since it cannot separate $x$ and $gx$ (otherwise it could cross $\gamma$ twice) and it cannot separate $gx$ and $gy$ because $gJ$ is the unique hyperplane separating $gx$ and $gy$. The conclusion is that the hyperplanes separating $y$ and $gy$ are exactly $J$ and the hyperplanes separating $x$ and $gx$ which are different from $gJ$. As a consequence, the number of hyperplanes separating $x$ and $gx$ equals the number of hyperplanes separating $y$ and $gy$, hence $d(x,gx)=d(y,gy)$. We conclude that $y$ belongs to $\mathrm{Min}(g)$.

\medskip \noindent
Second, assume that $g [a,b]$ is included into the subsegment $[b,x] \subset \gamma$. Because $J$ intersects $\gamma$ just once, it has to separate $a$ from $\{b,x,gx\}$. We know that $J$ separates $x$ and $y$, and we know that it cannot separate $gx$ and $gy$ since $gJ$ is the unique hyperplane separating $gx$ and $gy$. Consequently, $J$ separates $\{a,y\}$ and $\{x,gx,gy,b\}$. Next, because $J$ intersects $\gamma$ just once, it has to separate $\{a,b\}$ and $\{x,gx \}$. We know that $gJ$ separates $gx$ and $gy$, and we know that it cannot separate $x$ and $y$ since $J$ is the unique hyperplane separating $x$ and $y$. Consequently, $gJ$ separates $\{a,b, gy\}$ and $\{x,gx\}$. It follows that $J$ and $gJ$ are transverse, which is impossible.

\medskip \noindent
So far, we have proved that $y$ belongs to $\mathrm{Min}(g)$. It remains to show that $g^\infty y =\xi$ and $g^{-\infty}y=\zeta$. If there exists a hyperplane $J$ separating $g^\infty y$ and $g^\infty x$, then such a hyperplane has to separate $g^n y$ and $g^nx$ for some sufficiently large $n \geq 1$. Up to translating $J$ by $g^{-n}$, we may suppose without loss of generality that $J$ separates $x$ and $y$. On the other hand, according to \cite[Lemma 4.11]{FTT}, the fact that $J$ separates $g^\infty x$ and $g^\infty y$ implies that $J$ does not cross the axis $\gamma$. Therefore, $J$ has to separate $y$ from $\{\zeta,\xi\}$, contradicting the fact that $y$ belongs to $I(\zeta,\xi)$. We conclude that $g^\infty y = g^\infty x = \xi$. One shows similarly that $g^{- \infty}y= \zeta$. 
\end{proof}

\begin{proof}[Proof of Theorem \ref{thm:StableCentraliser}.]
Up to subdividing $X$, we may suppose without loss of generality that $g$ is loxodromic. If $g$ is a contracting isometry, then it follows for instance from the combination of \cite[Theorems 1.3 and 1.5]{SistoContracting}, \cite[Theorem 1.4]{OsinAcyl} and \cite[Corollary 6.6]{DGO} that its stable centraliser is virtually cyclic (although a direct proof is possible). Conversely, assume that $g$ is not contracting. According to Theorem \ref{thm:SMingQL}, $\mathrm{SMin}(g)$ is not a quasi-line. Let $Y \times I(\zeta,\xi)$ be the decomposition of $\mathrm{SMin}(g)$ given by Theorem \ref{thm:Iso}. We distinguish two cases.

\medskip \noindent
Case 1: $Y$ is unbounded. If $\mathrm{Fix}_Y(g^n)$ is bounded for every $n \geq 1$, then the sequence $(\mathrm{Fix}_Y(g^n))$ has to take infinitely many values as $\bigcup\limits_{n \geq 1} \mathrm{Fix}_Y(g^n)=Y$ according to Claim \ref{claim:StabFix}. So assume that $\mathrm{Fix}_Y(g^n)$ is unbounded for some $n \geq 1$. Because we know from Lemma \ref{lem:centraliserCC} that $C_G(g^n)$ acts geometrically on $\mathrm{Min}(g^n)$, which is isomorphic to $\mathrm{Fix}_Y(g^n) \times Q_n$ according to Lemma \ref{lem:Iso}, it follows $C_G(g^n)$ cannot be virtually cyclic. Consequently, $SC_G(g)$ cannot be virtually cyclic either. 

\medskip \noindent
Case 2: $Y$ is bounded. Fix an integer $n \geq 1$ such that $J$ and $g^nJ$ cannot be transverse for any hyperplane $J$ of $X$. (Such an integer exists for instance according to \cite[Lemma 2.2]{HaettelArtin}.) Let $\mathrm{Fix}_Y(g^n) \times Q_n$ be the decomposition of $\mathrm{Min}(g^n)$ given by Lemma~\ref{lem:Iso}. We know from \ref{lem:centraliserCC} that $C_G(g^n)$ acts geometrically on this median subalgebra, and it follows from Lemma \ref{lem:Qstable} that $Q_n= I(\zeta,\xi)$. Because Lemma \ref{lem:LocallyFiniteInf} implies that $Y$ must be finite, it follows that $C_G(g^n)$ contains a finite-index subgroup $C$ which acts geometrically on $I(\zeta,\xi)$. But, because $Y \times I(\zeta,\xi)$ is not a quasi-line and $Y$ is bounded, necessarily $I(\zeta,\xi)$ cannot be quasi-line, so that $C$, and a fortiori $C_G(g^n)$ and $SC_G(g)$, cannot be virtually cyclic. 
\end{proof}

\begin{remark}
Interestingly, the arguments above show that, if an isometry $g$ admits an axis $\gamma$ which is not quasiconvex, then its stable centraliser is not virtually cyclic. Indeed, it follows from Lemma \ref{lem:Qstable} that there exists some $n \geq 1$ such that $\mathrm{Min}(g^n)$ contains $I(\zeta,\xi)$. But the centraliser of $g^n$ acts geometrically on $\mathrm{Min}(g^n)$ according to Lemma \ref{lem:centraliserCC} and we know from Lemma \ref{lem:ConvexHullInter} that the interval $I(\zeta,\xi)$ is not a quasi-line if $\gamma$ is not quasiconvex. A consequence of this observation is that, if our cube complex decomposes as the Cartesian product of unbounded complexes and if the stable centraliser of our isometry $g$ is virtually cyclic, then $g$ has to preserve one of the factors (up to finite Hausdorff distance). This explains why, in Rattaggi's example (described in the introduction), the isometry of the product of trees stabilises a factor. 
\end{remark}

\section{Applications}\label{section:Applications}

\subsection{Special cube complexes}\label{section:Special}

\noindent
As a first application of Theorem \ref{thm:StableCentraliser}, we prove that:

\begin{thm}\label{thm:Special}
Let $G$ be a group acting geometrically on a CAT(0) cube complex $X$. Assume that, for every hyperplane $J$ and every element $g \in G$, the hyperplanes $J$ and $gJ$ are neither transverse nor tangent. Then an infinite-order element $g \in G$ defines a rank-one isometry of $X$ if and only if its stable centraliser $SC_G(g)$ is virtually cyclic. 
\end{thm}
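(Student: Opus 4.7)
The plan is to apply Theorem \ref{thm:StableCentraliser} and use the hypothesis to exclude its third alternative. Since the proof of Theorem \ref{thm:StableCentraliser} already shows that a rank-one isometry has virtually cyclic stable centraliser, only the converse direction requires work. So I suppose $g \in G$ has infinite order with $SC_G(g)$ virtually cyclic but is not rank-one, aiming for a contradiction.

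By the trichotomy we are in case (iii): $Y = \bigcup_n \mathrm{Fix}_Y(g^n)$ is infinite (Claim \ref{claim:StabFix}) while $\mathrm{Fix}_Y(g) \subsetneq Y$ is finite. Because $Y$ is a connected, locally finite (Lemma \ref{lem:LocallyFiniteInf}) CAT(0) cube complex, the non-trivial boundary of $\mathrm{Fix}_Y(g)$ in $Y$ yields adjacent vertices $v, \eta \in Y$ with $gv = v$ and $g\eta \neq \eta$. Let $J$ be the hyperplane of $X$ separating $v$ from $\eta$; then $gJ \neq J$, and $[v,\eta]$, $[v,g\eta]$ are distinct edges of $Y$ at $v$ corresponding to $J$ and $gJ$. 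My goal is to deduce that $(J, gJ)$ must be transverse or tangent in $X$, contradicting the hypothesis.

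Writing $A$ (resp.\ $gA$) for the halfspace of $J$ (resp.\ $gJ$) belonging to the orientation $v$, the orientations $v, \eta, g\eta \in Y$ lie in $A \cap gA$, $A^c \cap gA$, and $A \cap gA^c$ respectively. The key combinatorial fact I would invoke is that any halfspace intersection of $X$ containing an orientation of $\overline{X}$ is non-empty in $X$: if $H_1 \cap H_2 = \emptyset$ in $X$ then $H_1 \subset H_2^c$, forcing $H_2^c$ into any orientation containing $H_1$, contradicting $H_2$ being there too. This places vertices of $X$ in three of the four halfspace intersections $A^\pm \cap gA^\pm$. If the fourth intersection $A^c \cap gA^c$ is non-empty too, then $J$ and $gJ$ are transverse in $X$, contradicting the hypothesis. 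Otherwise, I would observe that the existence of $\eta$ and $g\eta$ as valid orientations forces $A$ and $gA$ to be inclusion-minimal among the halfspaces in $v$. A short check then shows that no hyperplane $H$ of $X$ can satisfy $A^c \subsetneq H^+ \subsetneq gA$: such an $H$ would produce (depending on the side of $H$ in $v$) a halfspace belonging to $v$ strictly contained in $A$ or in $gA$, contradicting the minimality of one of them. Applying Corollary \ref{cor:HypSepTwoConvex} to the disjoint convex sets $A^c$ and $gA^c$, only $J$ and $gJ$ separate them, so their minimum combinatorial distance is exactly $2$, and the intermediate vertex of a realizing geodesic lies in $N(J) \cap N(gJ)$, giving tangency of $J$ and $gJ$ in $X$ and the desired contradiction.

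The hard part, and the main obstacle, is the translation from the Roller-boundary statement ``$v$ has two distinct edges in $Y$ dual to $J$ and $gJ$'' into the purely combinatorial statement about halfspaces of $X$, namely the inclusion-minimality of $A$ and $gA$ in $v$. Once this dictionary is in place the transverse and tangent subcases close quickly via, respectively, the orientation-consistency observation and Corollary \ref{cor:HypSepTwoConvex}.
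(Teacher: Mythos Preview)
Your argument is correct and follows the same overall strategy as the paper: apply the trichotomy of Theorem~\ref{thm:StableCentraliser} and rule out case~(iii) by producing, from an edge $[v,\eta]$ of $Y$ with $gv=v$ and $g\eta\neq\eta$, a pair $J,gJ$ of hyperplanes that must be transverse or tangent. Your execution, however, is more elaborate than necessary.

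The paper reaches the same conclusion in one stroke, without the case split on $A^c\cap gA^c$ or the minimality analysis of halfspaces in $v$. The observation is simply that $\eta$ and $g\eta$ are two vertices of $Y$ at distance exactly~$2$, separated precisely by $J$ and $gJ$. If $N(J)\cap N(gJ)=\emptyset$, then Corollary~\ref{cor:HypSepTwoConvex} furnishes a hyperplane $H$ separating $N(J)$ from $N(gJ)$; a short check (as in the proof of Lemma~\ref{lem:LocallyFiniteInf}) shows that $H$ would then also separate $\eta$ from $g\eta$, giving a third hyperplane between two points at distance~$2$---a contradiction. Thus $N(J)\cap N(gJ)\neq\emptyset$, which is exactly the statement that $J$ and $gJ$ are transverse or tangent. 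This bypasses entirely the dictionary between edges of $Y$ and minimal halfspaces that you flag as ``the hard part''.

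A small presentational point: the paper phrases the argument positively, proving $\mathrm{Fix}_Y(g^n)=Y$ for every $n\geq 1$ by induction on the distance to a fixed vertex, rather than by contradiction from case~(iii). The content is the same, but the inductive formulation makes clear that the hypothesis forces the entire cubical component to be fixed, not merely that the fixed set cannot be both finite and proper.
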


\begin{proof}
Fix an axis $\gamma$ of $g$ and let $Y \subset \mathfrak{R}X$ be a cubical component containing an endpoint of $\gamma$. We claim that $\mathrm{Fix}_Y(g^n)=Y$ for every $n \geq 1$. So let $n \geq 1$ be an integer and let $\zeta \in \mathrm{Fix}_Y(g^n)$ be a point.

\medskip \noindent
If $\xi \in Y$ is a point adjacent to $\zeta$, then there exists a unique hyperplane $J$ which separates them. Of course, $g^n \xi$ must be adjacent to $\zeta$ as well since $g^n$ fixes $\zeta$, so that $g^nJ$ is the unique hyperplane separating $\zeta$ and $g^n\xi$. If $g^nJ \neq J$ then $J$ and $g^nJ$ are the unique hyperplanes separating $\xi$ and $g^n \xi$. Notice that, if $N(J)$ and $N(g^nJ)$ are disjoint, then it follows from Corollary \ref{cor:HypSepTwoConvex} that there exists a hyperplane $H$ separating them. But such a hyperplane would separate $\xi$ and $g^n \xi$, which is impossible. Therefore, $J$ and $g^nJ$ must be either transverse or tangent. Because such a configuration is forbidden by assumption, it follows that $\xi= g^n\xi$. 

\medskip \noindent
Thus, we have proved that $g^n$ fixes all the neighbors of $\zeta$. By arguing by induction over the distance to $\zeta$, we deduce that $g^n$ fixes $Y$ entirely. Now, the desired conclusion follows from Theorem \ref{thm:StableCentraliser}. 
\end{proof}

\noindent
As a particular case of Theorem \ref{thm:Special}, one gets:

\begin{cor}
Let $X$ be a compact special cube complex. A non-trivial element $g \in \pi_1(X)$ defines a rank-one isometry of $\widetilde{X}$ if and only if its centraliser in $\pi_1(X)$ is cyclic.
\end{cor}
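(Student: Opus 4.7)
The plan is to reduce the corollary to Theorem~\ref{thm:Special} applied to the action of $\pi_1(X)$ on $\widetilde{X}$, and then to upgrade the conclusion from ``stable centraliser virtually cyclic'' to ``centraliser cyclic'' using torsion-freeness. First, I verify the hypotheses. The definition of a special cube complex forbids hyperplane self-intersection and self-osculation, which lifts to the statement that for every hyperplane $J \subset \widetilde{X}$ and every $g \in \pi_1(X)$, the hyperplanes $J$ and $gJ$ are neither transverse nor tangent. Compactness of $X$ makes the action of $\pi_1(X)$ on $\widetilde{X}$ geometric, and two-sidedness of hyperplanes excludes inversions. Finally, $\pi_1(X)$ is torsion-free (either via Haglund--Wise's embedding into a right-angled Artin group, or because a non-trivial torsion element would have to fix a point in $\widetilde{X}$, contradicting freeness of the action). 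In particular, any non-trivial element of $\pi_1(X)$ has infinite order and acts loxodromically, and any virtually cyclic subgroup containing an infinite-order element is infinite cyclic.

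For the forward direction, assume $g \in \pi_1(X) \setminus \{1\}$ acts as a rank-one isometry. Theorem~\ref{thm:Special} yields that $SC_{\pi_1(X)}(g)$ is virtually cyclic, and combined with torsion-freeness, $SC_{\pi_1(X)}(g) \cong \mathbb{Z}$. Hence its subgroup $C_{\pi_1(X)}(g)$ is cyclic.

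For the converse, I argue the contrapositive. Assume $g$ is not rank-one; I will show $C_{\pi_1(X)}(g)$ is not cyclic. Fix an axis $\gamma$ of $g$, let $\zeta,\xi \in \mathfrak{R}\widetilde{X}$ denote its endpoints, and let $Y$ be the cubical component containing $\xi$. The argument in the proof of Theorem~\ref{thm:Special} actually establishes $\mathrm{Fix}_Y(g^n) = Y$ for every $n \geq 1$; combined with Lemma~\ref{lem:Qstable} (applicable because $J$ and $g^nJ$ are never transverse), Lemma~\ref{lem:Iso} yields $\mathrm{Min}(g^n) = Y \times I(\zeta,\xi)$ for every $n \geq 1$. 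In particular $\mathrm{Min}(g) = \mathrm{SMin}(g)$, so Lemma~\ref{lem:centraliserCC} tells us that $C_{\pi_1(X)}(g)$ acts geometrically on $\mathrm{SMin}(g)$. Since $g$ is not rank-one, Theorem~\ref{thm:SMingQL} asserts that $\mathrm{SMin}(g)$ is not quasi-isometric to a line, and therefore $C_{\pi_1(X)}(g)$ cannot be virtually $\mathbb{Z}$. By torsion-freeness, $C_{\pi_1(X)}(g)$ is not cyclic.

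The main obstacle is the converse direction: Theorem~\ref{thm:Special} alone only controls the stable centraliser, whereas the corollary concerns the centraliser itself. The key point is the observation that, under the specialness hypothesis, the minimising sets $\mathrm{Min}(g^n)$ do not grow with $n$ but all equal $Y \times I(\zeta,\xi)$; this is what allows the geometric information carried by $\mathrm{SMin}(g)$ to be transferred directly to $C_{\pi_1(X)}(g)$ rather than only to $SC_{\pi_1(X)}(g)$.
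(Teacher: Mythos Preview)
Your proof is correct, and the forward direction matches the paper exactly. The converse, however, proceeds differently.

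The paper's converse argues algebraically: it invokes the Haglund--Wise embedding of $\pi_1(X)$ into a right-angled Artin group together with Servatius' centraliser theorem to show that $SC_{\pi_1(X)}(g) = C_{\pi_1(X)}(g)$, so that Theorem~\ref{thm:Special} applies verbatim. Your converse stays entirely inside the paper's geometric machinery: because the specialness hypothesis forbids $J$ and $gJ$ from being transverse for \emph{every} deck transformation (not just large powers), Lemma~\ref{lem:Qstable} already gives $Q_1 = I(\zeta,\xi)$, and the proof of Theorem~\ref{thm:Special} gives $\mathrm{Fix}_Y(g) = Y$; hence $\mathrm{Min}(g) = \mathrm{SMin}(g)$ and Lemma~\ref{lem:centraliserCC} applied to $g$ itself (rather than a power) shows $C_{\pi_1(X)}(g)$ is not virtually cyclic. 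This avoids the external appeal to RAAG centralisers at the cost of not recovering the stronger algebraic statement $SC = C$; conversely, the paper's route is quicker once one accepts those outside results. Both are valid and the comparison is instructive: the paper's approach shows that specialness forces a rigidity of centralisers, while yours shows it forces a rigidity of minimising sets ($\mathrm{Min}(g^n)$ independent of $n$), which is the geometric counterpart.
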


\begin{proof}
Because $X$ does not contain self-intersecting or self-osculating hyperplanes, it follows that the action of $\pi_1(X)$ on the universal cover $\widetilde{X}$ satisfies the assumption of Theorem \ref{thm:Special}. Therefore, $g$ (which has infinite order since $\pi_1(X)$ is torsion-free) defines a rank-one isometry of $\widetilde{X}$ if and only if its stable centraliser is virtually cyclic, or equivalently cyclic since $\pi_1(X)$ is torsion-free. Notice that an element of $\pi_1(X)$ commutes with a power of $g$ if and only it commutes with $g$ itself. Indeed, such a property holds for right-angled Artin groups (as an immediate consequence \cite[Theorem 1.2]{MR634562}) and according to \cite[Theorem 4.2, Lemma 4.3]{Special} the fundamental group of a special cube always embeds into a right-angled Artin group. Therefore, the stable centraliser of $g$ turns out to coincide with its centraliser. 
\end{proof}

\subsection{Some two-dimensional cube complexes}\label{section:DimTwo}

\noindent
Our second application of Theorem \ref{thm:StableCentraliser} is:

\begin{thm}\label{thm:TwoDim}
Let $G$ be a group acting geometrically on a two-dimensional CAT(0) cube complex $X$. Assume that the link of a vertex of $X$ cannot contain an induced copy of $K_{2,3}$ in its one-skeleton. Then an infinite-order element $g \in G$ defines a rank-one isometry of $X$ if and only if its stable centraliser is virtually cyclic.
\end{thm}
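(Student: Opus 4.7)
The plan is to argue by contradiction, combining Theorem~\ref{thm:StableCentraliser} with a $K_{2,3}$ obstruction in a vertex link. Suppose $g \in G$ is an infinite-order element which is not a rank-one isometry of $X$ but whose stable centraliser is virtually cyclic. Theorem~\ref{thm:StableCentraliser} then forces the third alternative: each $\mathrm{Fix}_Y(g^n)$ is finite yet the sequence $(\mathrm{Fix}_Y(g^n))$ takes infinitely many values, and in particular $Y$ is infinite by Claim~\ref{claim:StabFix}. Fix an axis $\gamma$ of $g$ with endpoints $\zeta,\xi\in\mathfrak{R}X$, with $\xi \in Y$. By Theorem~\ref{thm:Iso}, $\mathrm{SMin}(g) \cong Y\times I(\zeta,\xi)$ as median algebras, and any $d$-cube in this product realises $2^d$ vertices of $\mathrm{SMin}(g)\subset X$ with cube median structure, hence a $d$-cube of $X$; therefore $\dim Y + \dim I(\zeta,\xi) \leq \dim X = 2$. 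Since $\gamma\subset I(\zeta,\xi)$, we get $\dim Y \leq 1$. The case $\dim Y = 0$ gives a constant sequence $(\mathrm{Fix}_Y(g^n))$, so $\dim Y = 1$
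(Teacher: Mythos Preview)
Your proof is incomplete: it breaks off at the line ``so $\dim Y = 1$'' and never reaches the step where the hypothesis on $K_{2,3}$ is used. Knowing that $Y$ is a tree is not enough. An elliptic isometry of a tree with branching can have fixed-point sets that are all finite yet strictly increase along powers (this is exactly what happens in Rattaggi's example mentioned in the introduction). What you still need is the content of Lemma~\ref{lem:BoundaryTree}: the $K_{2,3}$-free assumption forces every vertex of $Y$ to have at most two neighbours, so $Y$ is a linear tree. The argument for this is that three hyperplanes of $X$ corresponding to three neighbours of a point $\xi\in Y$ must have pairwise intersecting carriers, and together with a hyperplane separating a common vertex from $Y$ this produces a $K_{2,3}$ in a link. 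Once $Y$ is linear, $g$ acts on it as an elliptic isometry of order at most two, so $\mathrm{Fix}_Y(g^n)=Y$ for all $n\geq 2$ and the third alternative of Theorem~\ref{thm:StableCentraliser} is excluded. This is precisely the paper's route; your outline was heading there but stopped short of the essential step.

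A smaller point: your dimension argument is not quite right as written. The $2^d$ vertices of a $d$-cube in the cubulation of the median subalgebra $\mathrm{SMin}(g)$ need not themselves span a $d$-cube of $X$ (for instance, $\{0,2\}^2$ in $\mathbb{Z}^2$ is a median square that is not a $2$-cube). What is true is that the $d$ walls of this median cube are, by Lemma~\ref{lem:WallHyp}, traces of $d$ pairwise transverse hyperplanes of $X$, and \emph{those} produce a $d$-cube somewhere in $X$ by Helly's property. Alternatively, you can simply quote the standard fact (used in the paper's proof of Lemma~\ref{lem:BoundaryTree}) that a cubical component of $\mathfrak{R}X$ has dimension strictly less than $\dim X$, giving $\dim Y\leq 1$ directly.
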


\noindent
The theorem will be essentially an immediate consequence of Theorem \ref{thm:StableCentraliser} combined with the following lemma:

\begin{lemma}\label{lem:BoundaryTree}
Let $X$ be a two-dimensional CAT(0) cube complex. Assume that the link of a vertex of $X$ cannot contain an induced copy of $K_{2,3}$ in its one-skeleton. A cubical component of $\mathfrak{R}X$ must be a linear tree.
\end{lemma}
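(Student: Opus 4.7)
The plan is to show $Y$, being a connected CAT(0) cube complex, has dimension at most $1$ and every vertex of degree at most $2$. The first property forces $Y$ to be a tree, and the second makes this tree linear.

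\textbf{Dimension at most one.} Assume for contradiction that $Y$ contains a square, with dual hyperplanes $J_1$ and $J_2$. Analysing the coherence of the four corner-orientations of the square forces each of the four quadrants $J_1^\pm\cap J_2^\pm$ to be non-empty inside $X$, so that $J_1$ and $J_2$ are transverse hyperplanes of $X$ and meet along a $2$-cube $C$. The $2$-dimensionality of $X$ then prevents any third hyperplane of $X$ from being simultaneously transverse to both $J_1$ and $J_2$, since this would produce a $3$-cube. I would then fix a vertex $v$ of $C$ whose projection onto $Y$ (given by Proposition~\ref{prop:ProjInf}) is one of the corners of the square, and exploit the infinite family of hyperplanes separating $v$ from $Y$ to extract an induced $K_{2,3}$ in the link of $v$, contradicting the hypothesis.

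\textbf{Degree at most two.} Suppose a vertex $\zeta\in Y$ has three distinct neighbours in $Y$, obtained by flipping three hyperplanes $J_1,J_2,J_3$ of $X$ (all necessarily crossing $Y$). Coherence of the three resulting flipped orientations shows that each pair $(J_i,J_j)$ is either transverse in $X$ or ``faces across $\zeta$'', in the sense that the halfspaces of $J_i$ and $J_j$ not containing $\zeta$ are disjoint in $X$. Two-dimensionality of $X$ rules out the three hyperplanes being pairwise transverse. I would then aim to locate a vertex $v\in X$ at which $J_1,J_2,J_3$ appear as hyperplanes pairwise non-transverse in the link, together with two further hyperplanes $H,H'$ through $v$ that are transverse to each of $J_1,J_2,J_3$ at $v$ but not to one another. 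Since no two of $J_1,J_2,J_3$ are transverse at $v$, there is no $3$-cube obstruction to such a configuration, and what one obtains is precisely an induced $K_{2,3}$ in the link of $v$ with sides $\{H,H'\}$ and $\{J_1,J_2,J_3\}$, again contradicting the hypothesis.

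\textbf{Main obstacle.} The delicate step in both parts is producing the extra hyperplanes which, together with $J_1,J_2$ (respectively $J_1,J_2,J_3$), realise the forbidden $K_{2,3}$ in a link of $X$. These have to be extracted from the infinite family of hyperplanes separating a well-chosen vertex of $X$ from $Y$, supplied by Proposition~\ref{prop:ProjInf}, while the $2$-dimensionality of $X$ must be exploited to control the transversality pattern at $v$ (ruling out unwanted $3$-cubes while still allowing the claimed common transversalities). This transfer from the boundary-level data to a concrete link configuration is the technical heart of the argument.
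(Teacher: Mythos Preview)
Your strategy coincides with the paper's. For the dimension bound, the paper does not invoke the $K_{2,3}$ hypothesis at all: it simply quotes the general fact that a cubical component of $\mathfrak{R}X$ has strictly smaller dimension than $X$, so $Y$ is automatically a tree. Your own outline already contains a proof of this fact: once $J_1,J_2$ are transverse in $X$, any hyperplane separating a vertex of their common square from $Y$ is forced to be transverse to both, producing a $3$-cube---a contradiction reached without ever mentioning $K_{2,3}$.

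For the valence bound, the step you flag as the main obstacle is resolved in the paper as follows. First, the three hyperplanes $J_1,J_2,J_3$ are pairwise disjoint in $X$ (transverse in $X$ would force transverse in $Y$, but $Y$ is a tree). Next, since the three neighbours of $\zeta$ are pairwise at distance $2$ in $Y$, any hyperplane of $X$ separating $N(J_i)$ from $N(J_j)$ would be a third hyperplane separating those two neighbours---impossible. Hence the carriers pairwise intersect, and Helly's property yields a vertex $x\in N(J_1)\cap N(J_2)\cap N(J_3)$. Any hyperplane $H$ separating $x$ from $Y$ is then automatically transverse to each $J_i$: the edge at $x$ dual to $J_i$ shows both sides of $J_i$ meet the $x$-side of $H$, while $J_i$ crossing $Y$ puts both sides on the $Y$-side. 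A second application of Helly to the carriers of $H,J_1,J_2,J_3$ produces your vertex $v$.

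One remark in your favour: you are right that \emph{two} disjoint hyperplanes $H,H'$ separating $x$ from $Y$ (take them from a nested sequence, as supplied by Lemma~\ref{lem:DecreasingInf}) are needed to form the two-vertex side of the induced $K_{2,3}$. The paper's written proof names only a single extra hyperplane $J$ alongside the three $J_i$, giving four link-vertices rather than five; this appears to be a small slip, and your formulation with $H,H'$ is the correct completion.
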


\begin{proof}
Let $Y$ be a cubical component of $X$. As the dimension of $Y$ must be smaller than the dimension of $X$, we know that $Y$ is a tree. It remains to show that a vertex of $Y$ has at most two neighbors. Assume for contradiction that $Y$ contains a vertex $\xi$ with at least three neighbors $\alpha,\beta,\gamma \in Y$. Let $A,B,C$ denote the three hyperplanes separating $\xi$ from $\alpha,\beta,\gamma$ respectively. Notice that any two hyperplanes among $A,B,C$ are disjoint in $X$ as they are not transverse in $Y$.

\medskip \noindent
We claim that the carriers $N(A)$, $N(B)$ and $N(C)$ pairwise intersect.

\medskip \noindent
Indeed, if $N(A)$ and $N(B)$ are disjoint, it follows from Corollary \ref{cor:HypSepTwoConvex} that there exists a hyperplane $J$ separating $A$ and $B$. Because $\alpha$ and $\beta$ differ on $A$ and $B$, necessarily they differ on $J$ as well. But $\alpha$ and $\beta$ are two vertices of $Y$ at distance two apart, so they only differ on two hyperplanes. Consequently, $N(A)$ and $N(B)$ have to intersect. One shows similarly that $N(A)$ and $N(C)$, and $N(B)$ and $N(C)$, also intersect, concluding the proof of our claim.

\medskip \noindent
It follows from Helly's property that the intersection $N(A) \cap N(B) \cap N(C)$ is non-empty. Fix one of its vertices $x$. Notice that, if we fix a hyperplane $J$ separating $x$ from $Y$ (which exists as a consequence of Lemma \ref{lem:DecreasingInf}), then $J$ has to be transverse to $A$, $B$ and $C$. Once again as a consequence of Helly's property, there exists a vertex $z$ which belongs to the intersection $N(J) \cap N(A) \cap N(B) \cap N(C)$. By construction, the subgraph in the link of $z$ generated by the vertices which correspond to the edges adjacent to $z$ and dual to $A,B,C,J$ must be isomorphic to $K_{2,3}$, a contradiction.
\end{proof}

\begin{proof}[Proof of Theorem \ref{thm:TwoDim}.]
Because an elliptic isometry of a linear tree always has order at most two, it follows from Lemma \ref{lem:BoundaryTree} that the third case of the trichotomy provided by Theorem \ref{thm:StableCentraliser} cannot happen. The desired conclusion follows. 
\end{proof}

\subsection{Centralisers of regular elements}

\noindent
Our last application is the following statement:

\begin{thm}\label{thm:Regular}
Let $G$ be a group acting geometrically on a CAT(0) cube complex $X$. Assume that $G$ decomposes as a product of $n \geq 1$ unbounded irreducible CAT(0) cube complexes $X_1 \times \cdots X_n$. If $g \in G$ is a regular element, then $SC_G(g)$ is virtually $\mathbb{Z}^n$. 
\end{thm}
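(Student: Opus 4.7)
The plan is to exploit the product structure of $X$ to express the stable centraliser as a direct product of the stable centralisers of rank-one isometries on the individual factors, each of which is already known to be virtually cyclic. By the product decomposition theorem for groups acting geometrically on CAT(0) cube complexes (see \cite{CapraceSageev}), we may pass to a finite-index subgroup $G_0 \leq G$ that splits as $G_0 = G_1 \times \cdots \times G_n$, with each $G_i$ acting geometrically on the factor $X_i$. Choose $k \geq 1$ such that $g^k \in G_0$ and write $g^k = (g_1, \ldots, g_n)$; the regularity of $g$ ensures that each $g_i$ is a rank-one isometry of $X_i$, since the $k$-th power of a rank-one isometry is itself rank-one.

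The main computation is that $SC_{G_0}(g^k)$ decomposes as a direct product. For every $\ell \geq 1$, the fact that $G_0$ is a direct product gives
\[
C_{G_0}(g^{k\ell}) \;=\; C_{G_1}(g_1^\ell) \times \cdots \times C_{G_n}(g_n^\ell),
\]
and since these centralisers are nested in the divisibility order on $\ell$, taking common multiples of indices yields
\[
SC_{G_0}(g^k) \;=\; \bigcup_{\ell \geq 1} C_{G_0}(g^{k\ell}) \;=\; SC_{G_1}(g_1) \times \cdots \times SC_{G_n}(g_n).
\]
By Theorem \ref{thm:StableCentraliser} applied to each $g_i$---or, equivalently, by the fact recalled within its proof that contracting isometries have virtually cyclic stable centralisers---each factor $SC_{G_i}(g_i)$ is virtually $\mathbb{Z}$, so their direct product is virtually $\mathbb{Z}^n$.

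It remains to transfer the conclusion from $G_0$ back to $G$. Since $SC_H(g) = SC_H(g^k)$ holds for any subgroup $H \leq G$ and any $k \geq 1$, one has $SC_{G_0}(g^k) = G_0 \cap SC_G(g)$, which is of finite index in $SC_G(g)$ because $G_0$ is of finite index in $G$. Therefore $SC_G(g)$ is itself virtually $\mathbb{Z}^n$. The principal technical input is the product splitting of $G_0$ as $G_1 \times \cdots \times G_n$ provided by the splitting theorem; once this is in place, the remainder of the argument is a formal assembly of the rank-one cases across the $n$ factors, and this is where I expect the only real subtlety to lie (making sure that one really obtains a direct product decomposition of a finite-index subgroup, rather than merely a subdirect product).
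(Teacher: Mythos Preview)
Your own closing caveat identifies the fatal gap: the passage from ``$G_0$ preserves the product decomposition'' to ``$G_0 = G_1 \times \cdots \times G_n$'' is unjustified, and in fact false in general. Caprace--Sageev gives only that $\mathrm{Aut}(X)$ has a finite-index subgroup isomorphic to $\prod_i \mathrm{Aut}(X_i)$, so a finite-index $G_0 \leq G$ embeds into this product as a \emph{subdirect} product; there is no reason for it to split. The very example of Rattaggi cited in the introduction --- an irreducible cocompact lattice in $\mathrm{Aut}(T_1) \times \mathrm{Aut}(T_2)$ --- is precisely a group acting geometrically on a product of two trees with no finite-index subgroup of the form $G_1 \times G_2$. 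Without the splitting, the identity $C_{G_0}(g^{k\ell}) = \prod_i C_{G_i}(g_i^\ell)$ collapses: you can still say that $h \in C_{G_0}(g^{k\ell})$ projects to an isometry of $X_i$ commuting with $g_i^\ell$, but there is no discrete group $G_i$ acting geometrically on $X_i$ in which to invoke ``rank-one $\Rightarrow$ virtually cyclic stable centraliser''.

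The paper circumvents this entirely by never splitting the group. Instead it works with the geometry of $\mathrm{Min}(g^m)$: for suitable $m$ one has $\mathrm{Min}(g^m) \cong \mathrm{Fix}_Y(g^m) \times I(\zeta,\xi)$ (Lemma~\ref{lem:Iso} and Lemma~\ref{lem:Qstable}), and the regularity of $g$ forces the cubical component $Y$ to be finite (via Lemma~\ref{lem:BoundedInf} applied factorwise) while $I(\zeta,\xi) = \prod_i I(\zeta_i,\xi_i)$ is a product of $n$ quasi-lines. Thus $C_G(g^m)$ acts geometrically on a space quasi-isometric to $\mathbb{R}^n$ and is virtually $\mathbb{Z}^n$; the passage to $SC_G(g)$ then uses the Bridson--Haefliger fact that an ascending union of virtually abelian subgroups in a CAT(0) group stabilises. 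If you want to rescue your approach, you would need to replace the group-theoretic splitting by this geometric one.
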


\noindent
We begin by proving a preliminary lemma:

\begin{lemma}\label{lem:BoundedInf}
Let $X$ be a CAT(0) cube complex and $g \in \mathrm{Isom}(X)$ a loxodromic isometry with a fixed axis $\gamma$. If $g$ is a contracting isometry, then the cubical component of $\mathfrak{R}X$ containing $\gamma(+ \infty)$ is bounded.
\end{lemma}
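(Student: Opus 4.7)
The plan is to derive the statement as an immediate corollary of the two structural results already established earlier in the paper, so essentially no new work is required. Write $\xi := \gamma(+\infty)$ and $\zeta := \gamma(-\infty)$, and let $Y \subset \mathfrak{R}X$ denote the cubical component containing $\xi$. The hypothesis that $g$ is contracting tells us, via Theorem \ref{thm:SMingQL}, that the stable minimising set $\mathrm{SMin}(g)$ is quasi-isometric to a line. The decomposition theorem (Theorem \ref{thm:Iso}) then provides an isomorphism of median algebras
$$\mathrm{SMin}(g) \;\longrightarrow\; Y \times I(\zeta,\xi),$$
and in view of Lemma \ref{lem:WallHyp} (which identifies walls of a median subalgebra with traces of hyperplanes of the ambient cube complex) this isomorphism is in fact an isomorphism of cube complexes. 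Consequently the graph metric on $\mathrm{SMin}(g)$ corresponds to the $\ell^1$ sum of the graph metrics on $Y$ and on $I(\zeta,\xi)$, and the product $Y \times I(\zeta,\xi)$ is itself quasi-isometric to a line.

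The concluding step is the observation that $I(\zeta,\xi)$ is unbounded: by Lemma \ref{lem:ConvexHullInter} it is the convex hull of the bi-infinite axis $\gamma$, and in particular it contains $\gamma$ entirely. Were $Y$ also unbounded, I would pick a basepoint $(\alpha_0,a_0) \in Y \times I(\zeta,\xi)$, sequences $(\alpha_n)$ in $Y$ and $(a_n)$ in $I(\zeta,\xi)$ with $d_Y(\alpha_0,\alpha_n) \to +\infty$ and $d_{I(\zeta,\xi)}(a_0,a_n) \to +\infty$, and thereby obtain arbitrarily large coordinate boxes inside $Y \times I(\zeta,\xi)$. This would give a quasi-isometric embedding of $\mathbb{Z}^2$ into $Y \times I(\zeta,\xi)$, and hence into $\mathrm{SMin}(g)$, contradicting that the latter is quasi-isometric to a line. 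Therefore $Y$ must be bounded.

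There is no genuine obstacle in this argument: once Theorems \ref{thm:Iso} and \ref{thm:SMingQL} are in hand, the lemma is simply the observation that an unbounded $\ell^1$ product cannot be a quasi-line if both factors are unbounded. The only mild point to keep in mind is that $X$ is implicitly taken to be (uniformly) locally finite, which is harmless since the lemma will be applied in the geometric-action setting of Theorem \ref{thm:Regular}.
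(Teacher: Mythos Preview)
Your argument is correct and there is no circularity: Theorems~\ref{thm:Iso} and~\ref{thm:SMingQL} are proved in Sections~\ref{section:SMin} and~\ref{section:Contracting}, well before Lemma~\ref{lem:BoundedInf} appears, and neither of them invokes Lemma~\ref{lem:BoundedInf}. In fact, the boundedness of $Y$ when $g$ is contracting is already established inside the proof of Theorem~\ref{thm:SMingQL} itself (via the claim there and Proposition~\ref{prop:ContractingFlat}), so your deduction is essentially a one-line unpacking of that theorem.

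However, the paper takes a genuinely different and more elementary route. Rather than appealing to the decomposition $\mathrm{SMin}(g)\cong Y\times I(\zeta,\xi)$ and to Theorem~\ref{thm:SMingQL}, the paper argues directly from the characterisation in Proposition~\ref{prop:contracting}(vi): a contracting isometry skewers a nested sequence $J_1,J_2,\ldots$ of pairwise $L$-separated hyperplanes. By Lemma~\ref{lem:DecreasingInf} each $J_i^+$ contains $Y$, and one checks that any hyperplane separating two points of $Y$ must be transverse to all but finitely many of the $J_i$'s; the $L$-separation then forces at most $L$ such hyperplanes, giving the explicit bound $\mathrm{diam}(Y)\le L$. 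This direct argument is independent of the structural Theorems~\ref{thm:Iso} and~\ref{thm:SMingQL}, avoids the median-algebra machinery entirely, and yields a quantitative diameter estimate that your approach does not. Your approach, on the other hand, is shorter once those theorems are in place and illustrates that Lemma~\ref{lem:BoundedInf} is really a formal consequence of them; the uniform local finiteness you flag is harmless, since both routes ultimately rely on Proposition~\ref{prop:contracting}, which is stated under that hypothesis.
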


\begin{proof}
Let $Y$ denote the cubical component of $\mathfrak{R}X$ which contains $\gamma(+ \infty)$. As a consequence of Proposition \ref{prop:contracting}, there exist an integer $L \geq 1$ and a sequence of pairwise $L$-separated hyperplanes $J_1,J_2, \ldots$ such that $J_i$ separates $J_{i-1}$ and $J_{i+1}$ for every $i \geq 2$. For every $i \geq 1$, let $J_i^+$ denote the halfspace delimited by $J_i$ which contains $J_{i+1}$; notice that $Y \subset J_i^+$ as a consequence of Lemma \ref{lem:DecreasingInf}. Now fix two vertices $\alpha, \beta \in Y$ and let $\mathcal{J}$ denote the collection of the hyperplanes separating them. 

\medskip \noindent
Let $J \in \mathcal{J}$. Fix two vertices $x,y \in X$ separating by $J$; say that $x$ and $\alpha$ belong to the same halfspace delimited by $J$. Because there exist only finitely many hyperplanes separating a given vertex $z \notin J_1^+$ from $x$ or $y$, it follows that there exists some $i_0 \geq 1$ such that $x,y \notin J_i^+$ for every $i \geq i_0$. As $J$ separates $\{\alpha,x\}$ and $\{\beta,y\}$, and, for every $i \geq i_0$, $J_i$ separates $\{x,y\}$ and $\{\alpha,\beta\}$, we deduce that $J$ and $J_i$ are transverse.

\medskip \noindent
Thus, we have proved that any hyperplane of $\mathcal{J}$ is transverse to all but finitely many $J_1,J_2, \ldots$, which implies that $\mathcal{J}$ has cardinality at most $L$. Therefore, $d(x,y) = \# \mathcal{J} \leq L$. The conclusion is that $Y$ has diameter at most $L$. 
\end{proof}

\noindent
We are now ready to prove our theorem.

\begin{proof}[Proof of Theorem \ref{thm:Regular}.]
Let $\gamma$ be an axis of $g$. For every $1 \leq i \leq n$, let $\gamma_i$ denote the projection of $\gamma$ onto $X_i$. Then $\gamma_i$ is a bi-infinite geodesic of $X_i$ on which $g$ acts by translations, so it is an axis of $g$ with respect to the induced action $\langle g \rangle \curvearrowright X_i$. Let $\zeta_i, \xi_i \in \mathfrak{R}X$ denote the endpoints at infinity of $\gamma_i$, and $\zeta, \xi \in \mathfrak{R}X$ the endpoints at infinity of $\gamma$. Also, let $Y_i$ denote the cubical component of $\mathfrak{R}X_i$ which contains $\xi_i$, and $Y$ the cubical component of $\mathfrak{R}X$ which contains $\xi$. Notice that $Y= Y_1 \times \cdots \times Y_n$, so that it follows from Lemma \ref{lem:BoundedInf} that $Y$ is bounded, and in fact finite as a consequence of Lemma \ref{lem:LocallyFiniteInf}.  

\medskip \noindent
We claim that, if $n \geq 1$ is an integer such that $J$ and $g^nJ$ cannot be transverse for any hyperplane $J$ of $X$, then $C_G(g^n)$ is virtually $\mathbb{Z}^n$. (Such an integer exists for instance according to \cite[Lemma 2.2]{HaettelArtin}.) 

\medskip \noindent
Let $\mathrm{Fix}_Y(g^n) \times Q_n$ be the decomposition of $\mathrm{Min}(g^n)$ given by Lemma \ref{lem:Iso}. We know from \ref{lem:centraliserCC} that $C_G(g^n)$ acts geometrically on this median subalgebra, and it follows from Lemma \ref{lem:Qstable} that $Q_n= I(\zeta,\xi)$. Because $Y$ is finite, it follows that $C_G(g^n)$ contains a finite-index subgroup $C$ which acts geometrically on $I(\zeta,\xi)$. On the other hand, we have
$$I(\zeta,\xi) = \prod\limits_{i=1}^n I(\zeta_i,\xi_i) = \prod\limits_{i=1}^n \mathrm{ConvexHull}(\gamma_i),$$
where the last equality is justified by Lemma \ref{lem:ConvexHullInter}. Moreover, we know from Proposition~\ref{prop:contracting} that $\gamma_i$ is a Morse geodesic, so that we deduce from Lemma \ref{lem:ConvexHullInter} that the convex hull of $\gamma_i$ has to stay in a neighborhood of $\gamma_i$. In other words, the interval $I(\zeta,\xi)$ is a product of $n$ quasi-lines. Therefore, $C_G(g^n)$ has to be virtually $\mathbb{Z}^n$, concluding the proof of our claim.

\medskip \noindent
Because $C_G(g^p)$ is contained into $C_G(g^q)$ for every integers $p,q \geq 1$ such that $p$ divides $q$, it follows that $SC_G(g)$ is a union of subgroups which are all virtually $\mathbb{Z}^n$. But, according to \cite[Theorem II.7.5]{MR1744486}, a non-decreasing union of virtually abelian subgroups in a CAT(0) group must be eventually constant, so we conclude that the stable centraliser $SC_G(g)$ has to be virtually $\mathbb{Z}^n$. 
\end{proof}

\section{Open questions}\label{section:Questions}

\noindent
In Sections \ref{section:Special} and \ref{section:DimTwo}, we have shown that the third case of the trichotomy provided by Theorem \ref{thm:StableCentraliser} does not happen in some cases. It would be interesting to find a similar phenomenon for other non-exotic CAT(0) cube complexes. 

\begin{question}\label{q:CubedManifold}
Let $M$ be a compact cubed manifold. Is it true that a non-trivial element $g \in \pi_1(M)$ defines a rank-one isometry of the universal cover $\widetilde{M}$ if and only if its stable centraliser is infinite cyclic?
\end{question}

\noindent
Recall that a \emph{cubed manifold} is a manifold which admits a tessellation as a nonpositively curved cube complex. 

\medskip \noindent
Another interesting direction would be extend (a variation of) Theorem \ref{thm:StableCentraliser} to CAT(0) groups. As a particular case of interest: 

\begin{question}
Let $M$ be a compact Riemannian manifold of nonpositive curvature. Is it true that a non-trivial element $g \in \pi_1(M)$ defines a rank-one isometry of the universal cover $\widetilde{M}$ if and only if its stable centraliser is infinite cyclic?
\end{question}

\noindent
Let us conclude this article with a discussion about the following famous open question: 

\begin{question}\label{question:Famous}
Let $G$ be a group acting geometrically on a CAT(0) cube complex (or more generally, a CAT(0) space). If $G$ does not contain $\mathbb{Z}^2$, is $G$ necessarily hyperbolic? 
\end{question}

\noindent
It is worth noticing that, if the action of $G$ of its CAT(0) cube complex $X$ is such that the third point of Theorem \ref{thm:StableCentraliser} cannot happen, then the fact that $G$ does not contain $\mathbb{Z}^2$ implies that all the infinite-order elements of $G$ are rank-one isometries of $X$. This observation leads to the following natural question:

\begin{question}\label{question:AllContracting}
Let $G$ be a group acting geometrically on a CAT(0) cube complex $X$ (or more generally, a CAT(0) space). Assume that any infinite-order element of $G$ defines a rank-one isometry of $X$. Is $G$ hyperbolic?
\end{question}

\noindent
(It is not difficult to show that, given a group $G$ acting geometrically on a CAT(0) space $X$, then $G$ is hyperbolic if and only if there exists some $D \geq 0$ such that all the infinite-order elements of $G$ are $D$-contracting isometries of $X$. It makes Question \ref{question:AllContracting} even more natural.)   

\medskip \noindent
For instance, it may be expected that the combination of positive answers to Questions~\ref{q:CubedManifold} and \ref{question:AllContracting} leads to a positive answer of Question \ref{question:Famous} for fundamental groups of compact cubed manifolds, generalising \cite{WeakHyp}.

\addcontentsline{toc}{section}{References}

\bibliographystyle{alpha}
{\footnotesize\bibliography{RankOneCentraliser}}

\def\polhk#1{\setbox0=\hbox{#1}{\ooalign{\hidewidth
  \lower1.5ex\hbox{`}\hidewidth\crcr\unhbox0}}}
\begin{thebibliography}{Gen19b}

\bibitem[Bau81]{MR634562}
A.~Baudisch.
\newblock Subgroups of semifree groups.
\newblock {\em Acta Math. Acad. Sci. Hungar.}, 38(1-4):19--28, 1981.

\bibitem[BF09]{RankContracting}
M.~Bestvina and K.~Fujiwara.
\newblock A characterization of higher rank symmetric spaces via bounded
  cohomology.
\newblock {\em Geom. Funct. Anal.}, 19(1):11--40, 2009.

\bibitem[BH99]{MR1744486}
M.~Bridson and A.~Haefliger.
\newblock {\em Metric spaces of non-positive curvature}, volume 319 of {\em
  Grundlehren der Mathematischen Wissenschaften [Fundamental Principles of
  Mathematical Sciences]}.
\newblock Springer-Verlag, Berlin, 1999.

\bibitem[CFI16]{CIF}
I.~Chatterji, T.~Fern\'{o}s, and A.~Iozzi.
\newblock The median class and superrigidity of actions on {$\rm CAT(0)$} cube
  complexes.
\newblock {\em J. Topol.}, 9(2):349--400, 2016.
\newblock With an appendix by Pierre-Emmanuel Caprace.

\bibitem[Che00]{mediangraphs}
V.~Chepoi.
\newblock Graphs of some {$\rm CAT(0)$} complexes.
\newblock {\em Adv. in Appl. Math.}, 24(2):125--179, 2000.

\bibitem[CN05]{ChatterjiNiblo}
I.~Chatterji and G.~Niblo.
\newblock From wall spaces to {$\rm CAT(0)$} cube complexes.
\newblock {\em Internat. J. Algebra Comput.}, 15(5-6):875--885, 2005.

\bibitem[CS11]{CapraceSageev}
P.-E. Caprace and M.~Sageev.
\newblock Rank rigidity for {CAT}(0) cube complexes.
\newblock {\em Geom. Funct. Anal.}, 21(4):851--891, 2011.

\bibitem[CS15]{MR3339446}
R.~Charney and H.~Sultan.
\newblock Contracting boundaries of {$\rm CAT(0)$} spaces.
\newblock {\em J. Topol.}, 8(1):93--117, 2015.

\bibitem[DGO17]{DGO}
F.~Dahmani, V.~Guirardel, and D.~Osin.
\newblock Hyperbolically embedded subgroups and rotating families in groups
  acting on hyperbolic spaces.
\newblock {\em Mem. Amer. Math. Soc.}, 245(1156):v+152, 2017.

\bibitem[DP16]{CubeKahler}
T.~Delzant and P.~Py.
\newblock Cubulable k\"{a}hler groups.
\newblock {\em arXiv:1609.08474}, 2016.

\bibitem[Fer18]{random}
T.~Fern\'{o}s.
\newblock The {F}urstenberg-{P}oisson boundary and {${\rm CAT}(0)$} cube
  complexes.
\newblock {\em Ergodic Theory Dynam. Systems}, 38(6):2180--2223, 2018.

\bibitem[FFJ16]{EffRAAG}
T.~Fern\'{o}s, M.~Forester, and T.~Jing.
\newblock Effective quasimorphisms on right-angled {A}rtin groups.
\newblock {\em to appear in Ann. Inst. Fourier, arxiv:1602.05637}, 2016.

\bibitem[FLM18]{CCCrandom}
T.~Fern\'{o}s, J.~L\'{e}cureux, and F~Math\'{e}us.
\newblock Random walks and boundaries of {$\rm CAT(0)$} cubical complexes.
\newblock {\em Comment. Math. Helv.}, 93(2):291--333, 2018.

\bibitem[Gen16a]{coningoff}
A.~Genevois.
\newblock Coning-off \rm{CAT(0)} cube complexes.
\newblock {\em arXiv:1603.06513}, 2016.

\bibitem[Gen16b]{article3}
A.~Genevois.
\newblock Contracting isometries of \rm{CAT}(0) cube complexes and
  acylindricaly hyperbolicity of diagram groups.
\newblock {\em arXiv:1610.07791}, 2016.

\bibitem[Gen17a]{Qm}
A.~Genevois.
\newblock Cubical-like geometry of quasi-median graphs and applications to
  geometric group theory.
\newblock {\em PhD thesis, arXiv:1712.01618}, 2017.

\bibitem[Gen17b]{MoiHypCube}
A.~Genevois.
\newblock Hyperbolicities in cat(0) cube complexes.
\newblock {\em to appear in Enseign. Math., arXiv:1709.08843}, 2017.

\bibitem[Gen19a]{FTT}
A.~Genevois.
\newblock A cubical flat torus theorem and some of its applications.
\newblock {\em arXiv:1902.04883}, 2019.

\bibitem[Gen19b]{ThompsonFW}
A.~Genevois.
\newblock Hyperbolic and cubical rigidities of {T}hompson's group {$V$}.
\newblock {\em J. Group Theory}, 22(2):313--345, 2019.

\bibitem[Gen19c]{TorsionCube}
A.~Genevois.
\newblock A note on torsion subgroups of groups acting on finite-dimensional
  cat(0) cube complexes.
\newblock {\em In preparation}, 2019.

\bibitem[Hae15]{HaettelArtin}
T.~Haettel.
\newblock Virtually cocompactly cubulated {A}rtin-{T}its groups.
\newblock {\em arXiv:1509.08711}, 2015.

\bibitem[Hag07]{HaglundAxis}
F.~Haglund.
\newblock Isometries of {CAT}(0) cube complexes are semi-simple.
\newblock {\em arXiv:0705.3386}, 2007.

\bibitem[Hag08]{MR2413337}
Fr{\'e}d{\'e}ric Haglund.
\newblock Finite index subgroups of graph products.
\newblock {\em Geom. Dedicata}, 135:167--209, 2008.

\bibitem[HW08]{Special}
F.~Haglund and D.~Wise.
\newblock Special cube complexes.
\newblock {\em Geom. Funct. Anal.}, 17(5):1551--1620, 2008.

\bibitem[KS16a]{PingPong}
A.~Kar and M.~Sageev.
\newblock Ping pong on {$\rm CAT(0)$} cube complexes.
\newblock {\em Comment. Math. Helv.}, 91(3):543--561, 2016.

\bibitem[KS16b]{CubeUniformGrowth}
A.~Kar and M.~Sageev.
\newblock Uniform exponential growth for \rm{CAT}(0) square complexes.
\newblock {\em arXiv:1607.00052}, 2016.

\bibitem[Mos95]{WeakHyp}
L.~Mosher.
\newblock Geometry of cubulated {$3$}-manifolds.
\newblock {\em Topology}, 34(4):789--814, 1995.

\bibitem[Nic04]{NicaCubulation}
B.~Nica.
\newblock Cubulating spaces with walls.
\newblock {\em Algebr. Geom. Topol.}, 4:297--309, 2004.

\bibitem[Osi16]{OsinAcyl}
D.~Osin.
\newblock Acylindrically hyperbolic groups.
\newblock {\em Trans. Amer. Math. Soc.}, 368:851--888, 2016.

\bibitem[Rat05]{Rattaggi}
D.~Rattaggi.
\newblock Anti-tori in square complex groups.
\newblock {\em Geom. Dedicata}, 114:189--207, 2005.

\bibitem[Sag95]{MR1347406}
M.~Sageev.
\newblock Ends of group pairs and non-positively curved cube complexes.
\newblock {\em Proc. London Math. Soc. (3)}, 71(3):585--617, 1995.

\bibitem[Sis18]{SistoContracting}
A.~Sisto.
\newblock Contracting elements and random walks.
\newblock {\em J. Reine Angew. Math.}, 742:79--114, 2018.

\bibitem[Sul14]{MR3175245}
H.~Sultan.
\newblock Hyperbolic quasi-geodesics in {CAT}(0) spaces.
\newblock {\em Geom. Dedicata}, 169:209--224, 2014.

\end{thebibliography}

\end{document}